\definecolor{dkgreen}{rgb}{0,0.6,0}
\definecolor{gray}{rgb}{0.5,0.5,0.5}
\definecolor{mauve}{rgb}{0.58,0,0.82}
\newtheorem{corollary}{Corollary}[section]
\newtheorem{theorem}[corollary]{Theorem}
\newtheorem{lemma}[corollary]{Lemma}
\newtheorem{proposition}[corollary]{Proposition}
\newtheorem{definition}[corollary]{Definition}
\newtheorem{remark}[corollary]{Remark}
\newtheorem*{theorem*}{Theorem}
\newtheorem*{corollary*}{Corollary}
\numberwithin{equation}{section}
\newcommand{\SP}{S}
\let\series\bowtie
\newcommand{\Hs}{\mathcal{H}}
\newcommand{\N}{\mathbb{N}}
\newcommand{\Q}{\mathbb{Q}}
\renewcommand{\R}{\mathbb{R}}
\renewcommand{\C}{\mathbb{C}}
\newcommand{\Z}{\mathcal{Z}}
\newcommand{\A}{\mathcal{A}}
\newcommand{\Cext}{\hat{\C}}
\newcommand{\YGz}{y_{G_0}}
\newcommand{\airquotes}[1]{``#1''}
\newcommand{\pref}[1]{\textbf{\hyperref[#1]{here}}}
\DeclarePairedDelimiter{\abs}{\lvert}{\rvert}
\newfunc{\size}{size}
\renewcommand{\phi}{\varphi}
\renewcommand{\epsilon}{\varepsilon}
\title{On the complex zeros and the computational complexity of approximating the reliability polynomial}
\author[F. Bencs]{Ferenc Bencs}
\address{Ferenc Bencs, Centrum Wiskunde \& Informatica, P.O. Box 94079 1090 GB Amsterdam, The Netherlands}
\email{\texttt{ferenc.bencs@gmail.com}}
\thanks{FB was funded by the Netherlands Organisation of Scientific Research (NWO): VI.Veni.222.303}
\author[C. Piombi]{Chiara Piombi}
\address{Chiara Piombi, Alma Mater Studiorum - Universit\`a di Bologna - Via Zamboni, 33 - 40126 Bologna - Italy} 
\email{\texttt{piombichiara@gmail.com}}
\author[G.Regts]{Guus Regts}
\address{Guus Regts, Korteweg de Vries Institute for Mathematics, University of Amsterdam. P.O. Box 94248  
1090 GE Amsterdam The Netherlands}
\email{\texttt{g.regts@uva.nl}}
\thanks{GR was funded by the Netherlands Organisation of Scientific Research (NWO): VI.Vidi.193.068}
\date{\today}
\begin{document}

\begin{abstract}
In this paper we relate the location of the complex zeros of the reliability polynomial to parameters at which a certain family of rational functions derived from the reliability polynomial exhibits chaotic behaviour. 
We use this connection to prove new results about the location of reliability zeros. In particular we show that there are zeros with modulus larger than $1$ with essentially any possible argument.
We moreover use this connection to show that approximately evaluating the reliability polynomial for planar graphs at a non-positive algebraic number in the unit disk is \textsc{\#P}-hard.
\end{abstract}
\maketitle
\section{Introduction}
Consider for a connected (multi)graph\footnote{Throughout this paper we will allow our graphs to have parallel edges.} $G=(V,E)$ the probability, $R(G;p)$, that it remains connected if every edge has a probability $p\in [0,1]$ to fail.
The quantity $R(G;p)$ is in fact a polynomial in the failure probability $p$:
\begin{equation}
R(G;p)=\sum_{\substack{A\subseteq E\\(V,A)\text{ connected}}}(1-p)^{|A|}p^{|E|-|A|},\label{eq:def R(G;p)}    
\end{equation}
known as the \emph{(all-terminal) reliability polynomial}.
The study of graph reliability started during the Cold War as a model for communication networks where nodes and links could fail because of power outages, sabotage or being destroyed by bombs: there are references to lectures about the subject as early as 1952~\cite{vonneumann-1956}, though the reliability polynomial was first explicitly defined by Moore and Shannon in 1956~\cite{moore-1956}.

Since $R(G;p)$ is a polynomial in $p$ one can ask about the location of its complex zeros, henceforth called \emph{reliability zeros}, which gives rise to intriguing questions.
For example Brown and Colbourn~\cite{brown-1992} conjectured in 1992 that all reliability zeros are contained in the unit disk, but about twelve years later Royle and Sokal~\cite{royle-2004} found examples of reliability zeros barely outside the unit disk thereby disproving the conjecture. 
Later Brown and Mol~\cite{brown-2017} managed to find reliability zeros of slightly bigger modulus. However it remains open whether reliability zeros are uniformly bounded or not.

In the present paper we are interested in the relation between the location of the reliability zeros and the computational complexity of approximately computing $R(G;p)$ for a given algebraic number $p\in \C\setminus\{0,1\}$.
Let us first mention, in regard to \emph{exact} computation, that it has been known for about forty years that exactly computing the value $R(G;p)$ for a rational number $p\in (0,1)$ is \textsc{\#P}-hard by work of Ball and Provan~\cite{reliabilityishard}. Vertigan~\cite{vertigan-2005} showed that this extends to any algebraic number $p\in \C\setminus \{0,1\}$ even when the graphs are restricted to be planar. 

Our motivation stems from a recent line of work that relates the presence of zeros of graph polynomials and the computational hardness of approximately computing evaluations of these polynomials such as for the independence polynomial~\cites{Bezakovahardcore,chaoticratios} the partition function of the Ising model~\cite{buys-2022} the matching polynomial~\cite{Bezakovamatching} and the Tutte polynomial~\cites{galanis-2022,main-roots,main-approx}.
These papers moreover connected the presence of zeros and computational hardness to chaotic behaviour of a family of naturally associated rational functions.
Our main contribution is to uncover a like connection in the setting of the reliability polynomial and use this connection to prove new results about the location of the reliability zeros and to show that even approximately evaluating the reliability polynomial is \textsc{\#P}-hard in many cases.

\subsection{Our contributions}
To state our contributions we need a few definitions. 
Let $G=(V,E)$ be a graph and let $s,t\in V$ two distinct vertices, called the \emph{source} and \emph{sink} respectively. 
We call the triple $(G,s,t)$ a \emph{two-terminal} graph and denote the collection of all two-terminal graphs by $\mathcal{G}_2$.
Following the notation used in~\cite{brown-2017}, we define a \emph{$s-t$ split} in $G$ to be a spanning subgraph such that from each vertex $v$ of $G$ there is either a path from $v$ to $s$ or a path from $v$ to $t$, but not both. 
We then define the \emph{split reliability polynomial} as:
\begin{equation}
\SP(G;p)=\sum_{\substack{A\subseteq E\\(V,A)\text{ $s-t$ split}}}(1-p)^{|A|}p^{|E|-|A|}.
\end{equation}
Alternatively, let $\hat G$ be the graph obtained from $G$ by merging the vertices $s$ and $t$ (and any edge between $s$ and  $t$ becoming a loop).
Then $\SP(G;p)=R(\hat{G};p)-R(G;p)$.
Next we define the \emph{effective edge interaction} of $G$ at $p$ as
\begin{equation}\label{eq:def effective}
y_G(p) := (1-p)\frac{\SP(G;p)}{R(G;p)}+1
\end{equation}
and the \emph{virtual edge interaction} of $G$ at $p$ as
\begin{equation}\label{eq:def virtual}
\hat{y}_G(p) := \frac{R(G;p)}{\SP(G;p)}+1.
\end{equation}
We will motivate the terminology later, for now it suffices to think of $y_G(p)$ and $\hat{y}_G(p)$ as a rational functions in $p$. Note that we have omitted $s,t$ from the notation.

We need to introduce series and parallel compositions of two-terminal graphs.
Let $G_1$ and $G_2$ be two-terminal graphs with sources $s_1,s_2$ and sinks $t_1,t_2$ respectively. The \emph{parallel composition} of $G_1$ and $G_2$ (denoted $G_1\parallel G_2$) is the graph obtained from the disjoint union of $G_1$ and $G_2$ by identifying $s_1$ and $s_2$, and $t_1$ and $t_2$ into single vertices, respectively the source and sink of $G_1\parallel G_2$. The \emph{series composition} of $G_1$ and $G_2$ (denoted $G_1\series G_2$) is the graph obtained from the disjoint union of $G_1$ and $G_2$ by identifying $t_1$ and $s_2$ into a single vertex and with source $s_1$ and sink $t_2$.

\begin{center}
    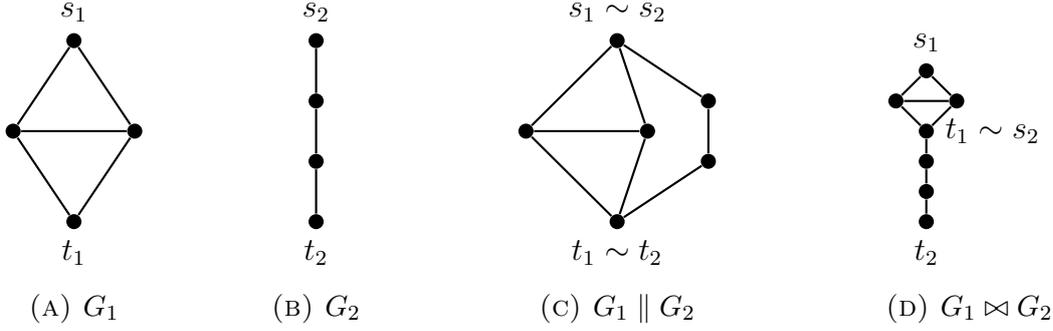
\begin{figure}[ht]

\begin{subfigure}{0.19\textwidth}
\centering
\begin{tikzpicture}[scale=0.4]
    \node[circle,fill,inner sep=2pt,label=above:$s_1$] (s) at (2,6) {};
    \node[circle,fill,inner sep=2pt] (0) at (0,3) {};
    \node[circle,fill,inner sep=2pt] (1) at (4,3) {};
    \node[circle,fill,inner sep=2pt,label=below:$t_1$] (t) at (2,0) {};
    \draw[thick] (s) -- (0) -- (t);
    \draw[thick] (s) -- (1) -- (t);
    \draw[thick] (0) -- (1);
\end{tikzpicture}
    \caption{$G_1$}
\end{subfigure}
\hfill
\begin{subfigure}{0.19\textwidth}
\centering
    \begin{tikzpicture}[scale=0.4]
    \node[circle,fill,inner sep=2pt,label=above:$s_2$] (s) at (2,6) {};
    \node[circle,fill,inner sep=2pt] (0) at (2,4) {};
    \node[circle,fill,inner sep=2pt] (1) at (2,2) {};
    \node[circle,fill,inner sep=2pt,label=below:$t_2$] (t) at (2,0) {};
    \draw[thick] (s) -- (0) -- (1) -- (t);
\end{tikzpicture}
    \caption{$G_2$}
\end{subfigure}
\hfill
\begin{subfigure}{0.29\textwidth}
\centering
    \begin{tikzpicture}[scale=0.4]
    \node[circle,fill,inner sep=2pt,label=above:$s_1\sim s_2$] (s) at (3,6) {};
    \node[circle,fill,inner sep=2pt] (0) at (0,3) {};
    \node[circle,fill,inner sep=2pt] (1) at (4,3) {};
    \node[circle,fill,inner sep=2pt,label=below:$t_1\sim t_2$] (t) at (3,0) {};
    \node[circle,fill,inner sep=2pt] (2) at (6,4) {};
    \node[circle,fill,inner sep=2pt] (3) at (6,2) {};
    \draw[thick] (s) -- (0) -- (t);
    \draw[thick] (s) -- (1) -- (t);
    \draw[thick] (0) -- (1);
    \draw[thick] (s) -- (2) -- (3) -- (t);
\end{tikzpicture}
    \caption{$G_1\parallel G_2$}
\end{subfigure}
\begin{subfigure}{0.29\textwidth}
\centering
    \begin{tikzpicture}[scale=0.4]
    \node[circle,fill,inner sep=2pt,label=above:$s_1$] (s) at (1,2) {};
    \node[circle,fill,inner sep=2pt] (0) at (0,1) {};
    \node[circle,fill,inner sep=2pt] (1) at (2,1) {};
    \node[circle,fill,inner sep=2pt, label=right:$t_1\sim s_2$] (m) at (1,0) {};
    \node[circle,fill,inner sep=2pt] (2) at (1,-1) {};
    \node[circle,fill,inner sep=2pt] (3) at (1,-2) {};
    \node[circle,fill,inner sep=2pt,label=below:$t_2$] (t) at (1,-3) {};
    \draw[thick] (s) -- (0) -- (m);
    \draw[thick] (s) -- (1) -- (m);
    \draw[thick] (0) -- (1);
    \draw[thick] (m) -- (2) -- (3) -- (t);
\end{tikzpicture}
    \caption{$G_1\series G_2$}
\end{subfigure}
 \caption{An example of series and parallel composition.}
       
\end{figure}

\end{center}

A two-terminal graph $G$ is called \emph{series-parallel} if it can be obtained
from series and parallel compositions of a single edge. 
For a two terminal graph $G$ we denote by $G^T$ the two-terminal graph obtained from $G$ by flipping the role of the source and the sink.
Let $G$ be a two-terminal graph; we will define the collection of \emph{series-parallel graphs generated by} $G$ as the collection of all two-terminal graphs obtained from series-parallel composition starting with $G$ or $G^T$ and denote this by $\Hs_G$.
For example, $\Hs_{K_2}$ denotes the set of all series-parallel graphs.
Note that a simple induction argument shows that $\Hs_G$ is closed under the operation $H\mapsto H^T$.
Next we define some subsets of the complex numbers that we want to study.
Let $G_0$ be a two-terminal graph. 
We define its \emph{exceptional set} by 
\begin{align*}
    \mathcal{E}(G_0):=\{p\in \C\mid R(G_0;p)=-\SP(G_0;p)\}.
\end{align*}

We define
\begin{align}
     \mathcal{Z}_{G_0}:=&\{p\in \C \setminus \mathcal{E}(G_0)\mid R(G;p)=0 \text{ for some }G\in \mathcal{H}_{G_0}\}\label{eq:zero-locus};
     \\
    \mathcal{D}_{G_0}:=&\{p\in \C\setminus \mathcal{E}(G_0)\}\mid \{y_G(p)\mid G\in \mathcal{H}_{G_0}, R(G;p)\neq 0\} \text{ is dense in  }\C\}\label{eq:density-locus};
    \\
     \mathcal{A}_{G_0}:=&\{p\in \C\setminus \mathcal{E}(G_0)\mid 1<|\hat{y}_G(p)|<\infty, \text{ and } \hat{y}_G(p)\notin \mathbb{R} \text{ for some } G\in \mathcal{H}_{G_0}\}\label{eq:activity-locus};
\end{align}
and refer to these sets as the \emph{zero-locus}, \emph{density locus} and \emph{activity locus} of $\mathcal{H}_{G_0}$ respectively. 
We moreover define the real analogues of these loci, restricting $p$ to be real, and denote these by $\mathcal{Z}_{G_0}^{\mathbb{R}}, \mathcal{D}_{G_0}^{\mathbb{R}}$ and $\mathcal{A}_{G_0}^{\mathbb{R}}$ respectively, 
where for the real density locus we require the set to be dense in $\mathbb{R}$ and for the real activity locus we also require that $\hat{y}_{G}(p)<-1$ and of course disregard the requirement that $\hat{y}_{G}(p)\notin \mathbb{R}$.

Our first main result relates the different loci defined above and will be proved in Section~\ref{sec:proof main}.
\begin{theorem}\label{thm:main equal}
Let $G_0$ be a two-terminal graph. Then the closure of the activity-locus of $\mathcal{H}_{G_0}$ equals the closure of the zero-locus of $\mathcal{H}_{G_0}$ and the closure of the density-locus of $\mathcal{H}_{G_0}$. More precisely, we have  
\begin{align*}
\overline{\Z_{G_0}}=\overline{\A_{G_0}} \quad \text{and}\quad  \mathcal{A}_{G_0}\setminus \{p\mid R(G_0;p)=0\}\subseteq \mathcal{D}_{G_0}\subseteq \mathcal{A}_{G_0}.
\end{align*}
\end{theorem}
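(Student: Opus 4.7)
My plan is to reduce Theorem~\ref{thm:main equal} to a dynamical statement about the iterated action on $\C$ of a finitely generated semigroup of Möbius-type maps associated with series and parallel composition, and then deploy Montel's three-value theorem to pass from non-normality to density and to existence of zeros. I will first establish composition rules. A direct counting using~\eqref{eq:def R(G;p)} gives (suppressing $p$) $R(G_1\parallel G_2) = R_1 R_2 + R_1 S_2 + S_1 R_2$, $\SP(G_1\parallel G_2) = S_1 S_2$, $R(G_1\series G_2) = R_1 R_2$, and $\SP(G_1\series G_2) = R_1 S_2 + S_1 R_2$. Substituting into~\eqref{eq:def virtual} yields the multiplicative rule $\hat y_{G_1 \parallel G_2} = \hat y_{G_1} \hat y_{G_2}$ and the harmonic rule $\frac{1}{\hat y_{G_1 \series G_2} - 1} = \frac{1}{\hat y_{G_1} - 1} + \frac{1}{\hat y_{G_2} - 1}$. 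Comparing~\eqref{eq:def effective} and~\eqref{eq:def virtual} gives $(y_G - 1)(\hat y_G - 1) = 1 - p$, so $y_G$ and $\hat y_G$ are Möbius-equivalent at each fixed $p$. Iterating, $\{\hat y_G(p) : G \in \Hs_{G_0}\}$ is generated from $\hat y_{G_0}(p)$ by closure under these two binary operations.

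The inclusion $\D_{G_0} \subseteq \A_{G_0}$ will follow at once from the Möbius identity: density of $\{y_G(p)\}$ produces $G$ with $|y_G(p) - 1|$ arbitrarily small and non-real, which forces $|\hat y_G(p) - 1|$ arbitrarily large and non-real, hence $|\hat y_G(p)| > 1$ and $\hat y_G(p) \notin \R$. The heart of the theorem is the reverse $\A_{G_0} \setminus \{p : R(G_0;p) = 0\} \subseteq \D_{G_0}$. Fix $p$ in this set with witness $G^* \in \Hs_{G_0}$ such that $\lambda := \hat y_{G^*}(p)$ has $|\lambda| > 1$ and $\lambda \notin \R$. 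The $n$-fold parallel composition of $G^*$ with itself has virtual edge interaction equal, as a holomorphic function of $q$ near $p$, to $\hat y_{G^*}(q)^n$; its derivative at $q = p$ equals $n\lambda^{n-1}\hat y_{G^*}'(p)$ and grows unboundedly in $n$, assuming $\hat y_{G^*}'(p) \neq 0$, which one can arrange by enlarging $G^*$ if necessary. Hence the family $\{q \mapsto \hat y_G(q) : G \in \Hs_{G_0}\}$ fails to be equicontinuous at $p$ and is therefore not normal there. Montel's theorem forces $\bigcup_G \hat y_G(U)$ to cover $\hat\C$ modulo at most two points on every small neighborhood $U$ of $p$. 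A separate algebraic argument, exploiting closure of the orbit under the multiplicative and harmonic generators, will rule out any invariant finite subset in $\C$, so $\{\hat y_G(p)\}$ itself is dense in $\C$; the Möbius identity then gives density of $\{y_G(p)\}$, establishing $p \in \D_{G_0}$.

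To finish, $\overline{\Z_{G_0}} = \overline{\A_{G_0}}$ will be split in two. For $\overline{\A_{G_0}} \subseteq \overline{\Z_{G_0}}$, fix $p \in \A_{G_0}$: if $R(G_0;p) = 0$ then $p \in \Z_{G_0}$ already; otherwise Montel, applied to the family $\{q \mapsto y_G(q)\}$ on a neighborhood $U$ of $p$, shows that $\bigcup_G y_G(U)$ covers $\hat\C$ modulo at most two exceptional values; excluding $\infty$ from the exceptions by the same algebraic argument provides $G \in \Hs_{G_0}$ and $q \in U$ with $y_G(q) = \infty$, i.e., $R(G;q) = 0$ and $q \in \Z_{G_0}$, placing a zero arbitrarily close to $p$. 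For $\overline{\Z_{G_0}} \subseteq \overline{\A_{G_0}}$, suppose $R(H;p_0) = 0$ with $H \in \Hs_{G_0}$; then $\hat y_H(p_0) = 1$, and since $\hat y_H - 1$ is a non-zero analytic function vanishing at $p_0$, any neighborhood of $p_0$ contains $p$ with $|\hat y_H(p)| > 1$ and $\hat y_H(p) \notin \R$. Amplifying via $n$-fold parallel composition of $H$, whose virtual edge interaction at $p$ is $\hat y_H(p)^n$, then witnesses activity at $p$ for large $n$, giving $p_0 \in \overline{\A_{G_0}}$. The main obstacle I foresee is the rigorous exclusion of Montel's possibly exceptional values in both directions: this will require a careful iterated-function-system analysis of the semigroup generated by the parallel and series rules, invoking $|\lambda| > 1$ and $\lambda \notin \R$ to preclude small invariant subsets of $\C$ that could trap the orbit.
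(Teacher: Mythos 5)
Your composition rules, the Möbius link $(y_G-1)(\hat y_G-1)=1-p$, and the easy inclusion $\D_{G_0}\subseteq\A_{G_0}$ all match the paper, and your instinct to use Montel--Carathéodory via a non-normality argument is exactly the mechanism behind $\overline{\A_{G_0}}\subseteq\overline{\Z_{G_0}}$. But there are two genuine gaps.

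The larger one is in $\A_{G_0}\setminus\{R(G_0;\cdot)=0\}\subseteq\D_{G_0}$. Non-normality at $p$, via Montel, tells you that the \emph{union of values over an entire neighborhood} $U$, i.e.\ $\bigcup_{G,q\in U}\hat y_G(q)$, hits all but at most two points of $\Cext$. This says nothing about the set of values $\{\hat y_G(p)\}$ at the single parameter $p$, which is what $\D_{G_0}$ requires; a family like $\{q\mapsto n(q-p)\}$ has unbounded derivatives at $p$ yet all its members vanish at $p$. Your ``separate algebraic argument'' is precisely the missing proof, and there is no short way around it. The paper does not use Montel here at all: it writes the parallel iterate $G_0^{\parallel(n+1)}$ as repeated application of the Möbius map $g$ of~\eqref{eq:define mobius}, observes that $1$ is attracting with multiplier $g'(1)=1/\hat y_{G_0}(p)$ (non-real, of modulus $<1$), obtains three graphs whose shifted effective interactions $y_{G_{n_i}}(p)-1$ are small and span a convex cone equal to $\C$, and invokes an $\epsilon$-density lemma (Lemma~\ref{lem:4.2}) for additive semigroups. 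That entirely constructive route gives density at the fixed $p$ directly and simultaneously controls $R(G;p)\neq 0$.

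The second gap is in $\overline{\A_{G_0}}\subseteq\overline{\Z_{G_0}}$, where you write ``excluding $\infty$ from the exceptions by the same algebraic argument.'' If $U$ is zero-free you only get \emph{one} omitted value ($\infty$ for $y_G$, equivalently $1$ for $\hat y_G$), so Montel does not apply without more. The paper's crucial input is Lemma~\ref{lem:3point}: if any $\hat y_G$ hits $\omega$ or $\omega^2$ (a primitive cube root of unity), then $G^{\parallel 3}$ gives $\hat y=1$ and hence a reliability zero; so zero-freeness of $U$ forces the family to omit the \emph{three} values $\{1,\omega,\omega^2\}$, which is what Montel needs. Once you have that, the paper exhibits an explicit sequence $(G^{\parallel n})^{\series m\,\parallel k}$ whose virtual interactions converge to a discontinuous limit, contradicting normality; your derivative-blowup route can substitute here provided you first arrange $\hat y_{G^*}'(p)\neq 0$, which you acknowledge but do not resolve. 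In short: the approach via Montel is correct for one inclusion but needs the third-root-of-unity trick to get three omitted values, and it is the wrong tool for density at a point, where the paper's explicit Möbius-dynamics-plus-additive-cone argument is the right one.
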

Theorem~\ref{thm:main equal} can be viewed as a (theoretical) tool for proving results about the reliability zeros. 
One immediate consequence of it is that reliability zeros are not isolated, because the activity-locus is an open set.
We next state some other consequences.
First of all note that for the edge $K_2$ seen as a two-terminal graph we have $\hat{y}_{K_2}(p)=1/p$ and $\mathcal{E}({K_2})=\emptyset$.
 Therefore, the closure of the activity locus of $\mathcal{H}_{K_2}$ is equal to the closed unit disk. 
 Thus Theorem~\ref{thm:main equal} implies the following result of Brown and Colbourn~\cite{brown-1992}.
\begin{corollary}
Reliability roots of series-parallel graphs are dense in the unit disk.
\end{corollary}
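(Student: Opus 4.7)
The strategy is to specialize Theorem~\ref{thm:main equal} to $G_0 = K_2$, since $\mathcal{H}_{K_2}$ is by definition the family of series-parallel two-terminal graphs. It then suffices to show that the closure of the activity locus $\mathcal{A}_{K_2}$ contains the closed unit disk, because the theorem will turn this into the corresponding statement for the zero locus $\mathcal{Z}_{K_2}$.

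First I would compute the basic polynomials of the single-edge two-terminal graph $K_2$: the only connected spanning subgraph is the full edge, giving $R(K_2;p) = 1-p$, and the only spanning subgraph that is an $s$--$t$ split is the edgeless subgraph, giving $\SP(K_2;p) = p$. Plugging these into \eqref{eq:def virtual} yields $\hat{y}_{K_2}(p) = (1-p)/p + 1 = 1/p$. Moreover $R(K_2;p) + \SP(K_2;p) = 1$ is nowhere zero, so $\mathcal{E}(K_2)$ is empty and no points get excluded from the loci.

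The graph $K_2$ alone then already witnesses $\mathcal{A}_{K_2}$ on a dense subset of the unit disk: for any $p$ with $|p|<1$ and $p\notin \mathbb{R}$, the value $\hat{y}_{K_2}(p) = 1/p$ has modulus in $(1,\infty)$ and is non-real, so $p\in \mathcal{A}_{K_2}$ by \eqref{eq:activity-locus}. The punctured disk $\{p : |p|<1,\ p\notin \mathbb{R}\}$ is dense in the closed unit disk, hence $\overline{\mathcal{A}_{K_2}}$ contains the closed unit disk. Theorem~\ref{thm:main equal} then gives $\overline{\mathcal{Z}_{K_2}} = \overline{\mathcal{A}_{K_2}} \supseteq \{p : |p|\leq 1\}$, proving density of the reliability roots of series-parallel graphs in the unit disk. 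There is essentially no obstacle once Theorem~\ref{thm:main equal} is available; the argument is a direct computation for the single edge combined with the elementary observation that inversion sends the non-real part of the open unit disk onto the non-real exterior, so all the content lies in Theorem~\ref{thm:main equal} itself.
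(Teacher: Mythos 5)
Your proof is correct and takes essentially the same route as the paper: compute $\hat{y}_{K_2}(p)=1/p$ and $\mathcal{E}(K_2)=\emptyset$, observe that the activity locus $\mathcal{A}_{K_2}$ therefore has closure containing the closed unit disk, and appeal to Theorem~\ref{thm:main equal}. You simply spell out the elementary step (that $K_2$ alone witnesses membership in $\mathcal{A}_{K_2}$ on the non-real part of the punctured disk) that the paper leaves implicit.
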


To illustrate the usefulness of Theorem~\ref{thm:main equal} as a tool for proving (new) statements about reliability zeros we next state two results that we prove in Section~\ref{sec:two prop}. 
Let us denote the collection of all reliability zeros as
\begin{equation}
\mathcal{Z} :=\{p\in \C\mid R(G;p)=0 \text{ for some connected graph $G$}\}.   
\end{equation}

Our first result gives a criterion for density of reliability zeros in terms of the existence of reliability zeros close to the positive real line.
\begin{proposition}\label{prop:unbounded zeros?}
The closure of the set of reliability zeros, $\overline{\mathcal{Z}}$, is equal to $\mathbb{C}$ if and only if $\mathcal{Z}$ is unbounded if and only if there exists $p>1$ such that $p\in \overline{\mathcal{Z}}$.
\end{proposition}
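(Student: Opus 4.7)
The plan is to establish the chain of equivalences; the two directions where $\overline{\mathcal{Z}}=\mathbb{C}$ is the hypothesis are immediate, so the work lies in the two non-trivial implications. The main tools are two elementary graph transformations. For a connected graph $G=(V,E)$: replacing each edge by $k$ parallel copies yields a connected graph $G'$ with $R(G';p)=R(G;p^k)$, so that every $k$-th root of a reliability zero of $G$ is a reliability zero of $G'$; replacing each edge by a path of $k$ edges yields a connected graph $G''$ satisfying
\[
R(G'';p)=\bigl[(1-p)^{k-1}\bigl(1+(k-1)p\bigr)\bigr]^{|E|}R\!\left(G;\tfrac{kp}{1+(k-1)p}\right),
\]
whence $p^{*}/(k-(k-1)p^{*})$ is a reliability zero of $G''$ whenever $p^{*}$ is one of $G$. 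Both operations preserve connectedness.

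To prove that $\mathcal{Z}$ being unbounded implies $\overline{\mathcal{Z}}=\mathbb{C}$, let $z_0=\rho e^{i\theta}\in\mathbb{C}$. If $\rho\le 1$, the preceding Brown--Colbourn density corollary gives $z_0\in\overline{\mathcal{Z}}$. If $\rho>1$, pick $p_n\in\mathcal{Z}$ with $|p_n|\to\infty$, realised by connected graphs $G_n$, and set $k_n:=\lfloor\log|p_n|/\log\rho\rfloor$ and $j_n:=\lfloor k_n\theta/(2\pi)\rfloor$. The element $q_n:=p_n^{1/k_n}e^{2\pi ij_n/k_n}$ is a reliability zero of the $k_n$-fold parallel bundling of $G_n$, and $|q_n|=|p_n|^{1/k_n}\to\rho$ while $\arg q_n=\arg(p_n)/k_n+2\pi j_n/k_n\to\theta$; thus $q_n\to z_0$ and $z_0\in\overline{\mathcal{Z}}$.

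To prove that the existence of $p^{*}>1$ in $\overline{\mathcal{Z}}$ implies $\mathcal{Z}$ is unbounded, pick $p_n\in\mathcal{Z}$ with $p_n\to p^{*}$. For any integers $k,m\ge 1$, applying parallel bundling of parameter $k$ followed by series subdivision of parameter $m$ to $G_n$ yields a connected graph with the reliability zero
\[
S_m\!\bigl(p_n^{1/k}\bigr):=\frac{p_n^{1/k}}{m-(m-1)p_n^{1/k}}.
\]
The idea is to match $k$ and $m$ so that the denominator is small. Since $(p^{*})^{1/k}=1+\log p^{*}/k+O(1/k^2)$ and $m/(m-1)=1+1/(m-1)$ both tend to $1^{+}$, setting $m_k:=1+\lfloor k/\log p^{*}\rfloor$ gives $|(p^{*})^{1/k}-m_k/(m_k-1)|=O(1/k^2)$, whence $|m_k-(m_k-1)(p^{*})^{1/k}|\le |\log p^{*}|/k$ for $k$ large. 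Consequently $|S_{m_k}((p^{*})^{1/k})|\ge k/|\log p^{*}|\to\infty$, and the actual zeros $S_{m_k}(p_n^{1/k})\in\mathcal{Z}$ converge to this limit as $n\to\infty$; thus for each large $k$ there are reliability zeros of modulus $\gtrsim k$, so $\mathcal{Z}$ is unbounded.

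The main obstacle is the asymptotic matching in the last implication: the integer $m_k$ must approximate $k/\log p^{*}$ closely enough that the denominator $m_k-(m_k-1)(p^{*})^{1/k}$ vanishes at rate $1/k$. This is a short Taylor computation relying only on $|\delta_k-1/2|\le 1/2$ for $\delta_k$ the fractional part of $k/\log p^{*}$, and it is the single non-trivial arithmetic ingredient of the proof.
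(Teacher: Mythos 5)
Your proof is correct (once minor points about avoiding poles of $S_m$ are noted) and takes a genuinely different, more elementary route than the paper. The paper deduces the result from its activity-locus machinery: it first shows $\overline{\mathcal{A}}=\overline{\mathcal{Z}}$ (Lemma~\ref{lem:closure of all activity and zeros}) and then proves a three-way equivalence for $\mathcal{A}$ (Proposition~\ref{eq:density of A}) by tracking how the maps $p\mapsto 1/\hat y_{K_2^{\parallel m}}(p)=p^m$ and $p\mapsto 1/\hat y_{K_2^{\bowtie n}}(p)$ transport an open set that misses $\mathcal{A}$, using Lemma~\ref{lem:zero-free} as the key transport lemma. You instead argue directly at the level of reliability zeros with two explicit graph operations---$k$-fold parallel bundling, which realises every $k$-th root of a zero as a new zero, and $m$-fold series subdivision, which pushes a zero $q$ to $q/(m-(m-1)q)$---and then do a short Diophantine/Taylor matching of $m$ to $k$ so that $(p^*)^{1/k}$ lands within $O(1/k^2)$ of $m_k/(m_k-1)$, making the image blow up. Notably, both arguments actually push around exactly the same two families of rational maps ($p\mapsto p^k$ from parallel bundles and $p\mapsto mp/(1+(m-1)p)$ from paths), but you apply them to the zeros directly rather than to an open set avoiding $\mathcal{A}$; this makes your proof self-contained modulo the Brown--Colbourn density result (used for $\rho\le 1$), whereas the paper's goes through Theorem~\ref{thm:main equal}. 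Your argument also makes more visible why the hypothesis $p^*>1$ (real) is needed: for complex $p^*$, $\mathrm{Im}\,(p^*)^{1/k}\sim\arg(p^*)/k$ keeps $(p^*)^{1/k}$ at distance $\gtrsim 1/k$ from the real targets $m/(m-1)$, so the denominator $(m_k-1)\bigl(m_k/(m_k-1)-(p^*)^{1/k}\bigr)$ stays bounded below and the construction fails. A couple of small points to patch: you should note that if $(p^*)^{1/k}$ happens to equal $m_k/(m_k-1)$, the expression $S_{m_k}\bigl((p^*)^{1/k}\bigr)$ is $\infty$ but the nearby actual zeros $S_{m_k}\bigl(p_n^{1/k}\bigr)$ are still finite and large, so the conclusion is only strengthened; and you should observe that $p_n\neq (m_k/(m_k-1))^k$ can be arranged since $\{p_n\}$ is infinite, so each $S_{m_k}(p_n^{1/k})$ used is genuinely in $\mathcal{Z}$.
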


Our next result says that there are reliability zeros outside of the closed unit disk in essentially each possible direction.
\begin{proposition}\label{prop:zeros outside disk}
Let $p\in \mathbb{C}$ such that $|p|=1$ and $p^k\neq 1$ for $k=1,\ldots,4$. 
Then there exists $\varepsilon=\varepsilon_p>0$ such that the disk $B(p,\varepsilon)$ is contained in the closure of $\mathcal{Z}$.
\end{proposition}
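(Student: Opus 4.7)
The strategy is to combine Theorem~\ref{thm:main equal} with an ``edge-thickening'' substitution. First, fix (using for instance the graph of Royle and Sokal~\cite{royle-2004}) a two-terminal graph $G^*$ with a reliability zero $p^*\in\mathbb{C}$ satisfying $|p^*|>1$. Since $\hat{y}_{G^*}$ is a non-constant rational function equal to $1$ at $p^*$ and taking non-real values at generic nearby points, there is an open neighborhood $V\subseteq \mathcal{A}_{G^*}$ of $p^*$ lying entirely outside the closed unit disk, on which $|\hat{y}_{G^*}|>1$ and $\hat{y}_{G^*}\notin\mathbb{R}$.

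For each positive integer $k$, let $G^*[E_k]$ denote the graph obtained from $G^*$ by replacing every edge with $k$ parallel edges. A direct computation, based on the observation that $k$ parallel copies of $K_2$ have reliability polynomial $1-p^k$ and split polynomial $p^k$, yields $\hat{y}_{G^*[E_k]}(p)=\hat{y}_{G^*}(p^k)$. Setting $\varphi_k(z):=z^k$, this shows $\varphi_k^{-1}(V)\subseteq\mathcal{A}_{G^*[E_k]}$, and hence by Theorem~\ref{thm:main equal}, $\varphi_k^{-1}(V)\subseteq \overline{\mathcal{Z}_{G^*[E_k]}}\subseteq \overline{\mathcal{Z}}$. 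Geometrically, $\varphi_k^{-1}(V)$ is a disjoint union of open neighborhoods of the $k$-th roots of $p^*$, lying on the circle of radius $|p^*|^{1/k}$, which tends to $1$ from above as $k\to\infty$.

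It remains to show that $B(p_0,\varepsilon)\subseteq\overline{\mathcal{Z}}$ for some small $\varepsilon>0$. The part of $B(p_0,\varepsilon)$ inside the closed unit disk is handled by the corollary (following from Brown and Colbourn~\cite{brown-1992}) stated earlier in this paper, that reliability zeros of series-parallel graphs are dense in the closed unit disk. For the outer portion, given a target point $q$ slightly outside the unit circle and close to $p_0$, one chooses $k$ so that $|p^*|^{1/k}$ matches $|q|$ to high radial precision and selects a $k$-th root of $p^*$ whose argument lies close to $\arg(q)$. The hypothesis that $p_0$ is not a root of unity of order at most $4$ guarantees that the orbit $\{k\arg(p_0)\bmod 2\pi\}_{k\geq 1}$ is sufficiently rich for a Weyl-type equidistribution argument to succeed, perhaps combined with varying the choice of $G^*$ to vary the argument of $p^*$. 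The main obstacle is the quantitative control of this density step: one must compare the $O(1/k)$-sized preimage neighborhoods against the angular spacing of the $k$-th roots of $p^*$, and verify that the specified low-order root-of-unity exclusion is both necessary and sufficient for the construction.
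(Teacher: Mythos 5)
The route you take is genuinely different from the paper's, and unfortunately the step you flag as the ``main obstacle'' is, in fact, a real gap that your outline does not close. Your construction only ever produces points of $\mathcal{A}$ whose modulus is strictly greater than $1$: the preimage $\varphi_k^{-1}(V)$ is a union of $k$ small discs centred on the $k$-th roots of $p^*$, which all lie on the circle of radius $|p^*|^{1/k}$. For a test point $q$ with $|q|=1+\delta$ and $\delta>0$ small, a calibration of the two constraints $|q|^k\approx|p^*|$ and $k\arg q\approx\arg p^* \pmod{2\pi}$ shows that the nearest point of $\bigcup_k\varphi_k^{-1}(V)$ is at distance of order $\delta/\log|p^*|$ from $q$ (the $k$-th roots of $p^*$ at the relevant $k$ are about $2\pi\delta/\log|p^*|$ apart in argument, while the admissible radii jump by a factor $\approx 1+\delta$). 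This places activity-locus points within $O(\delta)$ of $q$, not arbitrarily close, so it does not establish $q\in\overline{\mathcal{A}}$, and it certainly does not show that the boundary point $p$ itself lies in the open set $\mathcal{A}$ --- which is what the paper's argument needs in order to conclude $B(p,\varepsilon)\subseteq\mathcal{A}\subseteq\overline{\mathcal{Z}}$. Also notice that your calibration argument is completely insensitive to whether $\arg(p)/2\pi$ is rational of small denominator, so the hypothesis $p^k\neq 1$ for $k\le 4$ cannot arise from your construction; if the argument worked it would already disprove boundedness of reliability zeros, which is open.

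The paper's actual proof is quite different and much more concrete. It introduces a pentagon template graph $H$ with two edge labels, computes (Lemma~\ref{lemma:pentagonal}) the two-variable rational function $F(y_1,y_2)=\hat{y}_{H(G_1,G_2)}$, and shows explicitly that $|F(e^{it},e^{-it})|>1$ precisely when $|\cos t|\geq\frac{1}{4}(5\sqrt{2}-4)$. Replacing the two edge classes by $K_2^{\parallel k}$ and $K_2^{\parallel k\ell}$ (so $\hat y_{G_1}(p)=p^{-k}$, $\hat y_{G_2}(p)=p^{-k\ell}\approx p^k$) and invoking density of orbits on the circle then puts the boundary point $p$ itself into $\mathcal{A}$ whenever $p$ is not a root of unity of order $\leq 9$; the orders $5$ through $9$ are handled by five explicitly constructed graphs (Proposition~\ref{prop:5to9}), leaving only $k\le 4$ unresolved, which is exactly the exclusion in the statement. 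Finally $\overline{\mathcal{A}}=\overline{\mathcal{Z}}$ (Lemma~\ref{lem:closure of all activity and zeros}) and openness of $\mathcal{A}$ give the small disc. Your edge-thickening idea is a reasonable tool and does show $\varphi_k^{-1}(\mathcal{A}_{G_0})\subseteq\mathcal{A}_{G_0(K_2^{\parallel k})}$, but by itself it can never move you from the exterior onto the unit circle, which is the quantitative crux here.
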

If one could get rid of the constraint that $p^k\neq 1$ for $k=1,\ldots, 4$ in the proposition above, this would imply by Proposition~\ref{prop:unbounded zeros?} that the collection of reliability zeros is dense in the complex plane.

\subsection*{Approximating the reliability polynomial}
To state our results about the complexity of approximately computing the reliability polynomial, we need to formally introduce some computational problems. 

We consider two types of approximation problems, one for the norm of $R(G;p)$ and one for its argument, for each algebraic number $p$ separately. We consider the argument of a complex number to be an element of $\R/2\pi\mathbb{Z}$, and for $\xi\in\R/2\pi\mathbb{Z}$ we take $\text{abs}{(\xi)}=\min_{\theta\in\xi+2\pi\mathbb{Z}}|{\theta}|$.

Let $p$ be a complex number and $r>0$. We call a number $q\in\Q$ an \emph{$r$-abs-approximation of $p$} if $p\neq 0$ implies $e^{-r}\leq \frac{r}{|p|}\leq e^r$. We call a number $\xi\in\Q$ an \emph{$r$-arg-approximation of $p$} if $p\neq 0$ implies
that $\text{abs}{(\xi-\arg(p))}\leq r$. Note that in both cases an approximation of $0$ could be anything.

We then define two approximation problems for each algebraic $p\in\C$:
\vspace{10pt}\\
\begin{tabular}{rl}
    Name: & \textsc{Approx-Abs-Planar-Rel$(p)$} \\
    Input: & A planar graph $H$.\\
    Output: & A $0.25$-abs-approximation of $R(H;p)$.
\end{tabular}
\vspace{10pt} \\
\begin{tabular}{rl}
    Name: & \textsc{Approx-Arg-Planar-Rel$(p)$} \\
    Input: & A planar graph $H$.\\
    Output: & A $0.25$-arg-approximation of $R(H;p)$.
\end{tabular}
\vspace{10pt}\\
We define  the problems \textsc{Approx-Abs-Rel$(p)$} and \textsc{Approx-Arg-Rel$(p)$} in a similar way, putting no restrictions on the input graph $H$. Note that if $p$ is a real number finding a $0.25$-arg approximation of $R(H;p)$ is equivalent to finding its sign (assuming $R(H;p)\neq 0$).

\begin{theorem}\label{thm:main hard}
Let $G_0$ be a two-terminal graph.
Then for any algebraic number $p\in \mathcal{D}_{G_0}\cup \mathcal{D}^{\mathbb{R}}_{G_0}$ such that $R(G_0;p)S(G_0;p)\neq 0$,
\begin{itemize}
    \item  the problems \textsc{Approx-Arg-Rel$(p)$} and \textsc{Approx-Arg-Rel$(p)$} are \textsc{\#P}-hard;
    \item if additionally $G_0$ is planar with its two terminals on the same face, then the problems \textsc{Approx-Abs-Planar-Rel$(p)$} and \textsc{Approx-Arg-Planar-Rel$(p)$} are \textsc{\#P}-hard.
\end{itemize}
\end{theorem}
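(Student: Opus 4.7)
The plan is to adapt the ``chaotic ratios'' reduction developed in the recent line of work on the complexity of approximating Tutte, Ising and independence polynomials. The starting point is Vertigan's theorem, which gives \textsc{\#P}-hardness of \emph{exact} evaluation of $R(H;p_0)$ for every fixed algebraic $p_0\in\C\setminus\{0,1\}$ (and likewise on planar graphs). I would reduce this exact problem, at a convenient rational $p_0$, to $0.25$-approximation at the fixed algebraic $p$.

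The first step is to set up an implementation mechanism. If $e$ is an edge of a host graph $H$ and $K\in\Hs_{G_0}$ is a two-terminal gadget, a short computation starting from the definitions of $\SP(K;p)$ and $R(K;p)$ gives a deletion--contraction style identity showing that replacing $e$ by $K$ is equivalent, up to a nonzero efficiently computable scalar, to assigning a new edge weight to $e$ in a weighted version of the reliability polynomial on $H$, and that this new weight is parametrised exactly by $y_K(p)$. Hence the set of implementable weights at $p$ equals $\{y_K(p):K\in\Hs_{G_0}\}$, which is dense in $\C$ (respectively in an appropriate real interval) by the definition of $\mathcal{D}_{G_0}$. The assumption $R(G_0;p)\SP(G_0;p)\neq 0$ ensures that $G_0$ itself gives a well-defined non-degenerate effective interaction, which is what allows the implementation scheme to be initialised.

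The second step upgrades density to efficient approximation by invoking the activity side of Theorem~\ref{thm:main equal}. Since $\mathcal{D}_{G_0}\subseteq\mathcal{A}_{G_0}$, there exists a gadget $K_0$ with $|\hat{y}_{K_0}(p)|>1$ and $\hat{y}_{K_0}(p)\notin\R$ (respectively $\hat{y}_{K_0}(p)<-1$ in the real case). Series and parallel compositions act on the edge interactions as M\"obius transformations, and the transformation induced by $K_0$ is loxodromic (respectively strictly expanding on the real line); iterating it produces exponential expansion, and combined with density this yields, for every target weight $w$ and every precision $\varepsilon=2^{-n}$, a gadget $K_w^{(n)}\in\Hs_{G_0}$ of size $\operatorname{poly}(n,\log|w|)$ with $|y_{K_w^{(n)}}(p)-w|<\varepsilon$. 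Given such efficient implementations, the reduction becomes routine: starting from an instance $H$ of exact evaluation of $R(H;p_0)$, one replaces each edge of $H$ by a gadget $K_{w_0}^{(n)}$ simulating the weight $w_0$ corresponding to $p_0$, with $n=\operatorname{poly}(|H|)$ so that accumulated errors stay controlled, calls the approximation oracle on the resulting graph $H'$, and recovers $R(H;p_0)$ exactly via standard interpolation and precision-boosting arguments, using that $R(H;p_0)$ is a rational number of controlled bit complexity. Planarity is preserved throughout, because the hypothesis that $G_0$ is planar with its two terminals on a common face is stable under series and parallel composition, and edge substitution with planar-terminal gadgets preserves planarity of the host; combined with Vertigan's planar hardness this yields the planar statement.

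The main obstacle I expect is the quantitative chaoticity step just described: density alone gives only arbitrarily good approximations with no a priori size bound, whereas the reduction requires gadgets of polynomial size. Concretely, one needs to show that the M\"obius semigroup generated by the implementable edge interactions at $p$ contains an element that expands distances, in some appropriate chordal or hyperbolic metric, by a definite factor, so that after $O(n)$ compositions one can cover $\C$ (or $\R$) at scale $2^{-n}$. This is the technical heart of the argument, and I would expect to handle it by combining the existence of a loxodromic (respectively expanding) element supplied by the activity locus with the density statement, in the spirit of the corresponding arguments in the independence, Ising, and Tutte settings.
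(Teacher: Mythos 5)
Your skeleton has the right phylum—Vertigan's theorem as the source of hardness, $\mathcal{D}_{G_0}$ as the supply of implementable edge interactions, $\mathcal{A}_{G_0}$ supplying a loxodromic/expanding M\"obius element so that density can be upgraded to \emph{efficient} density (this corresponds to Theorem~\ref{thm:constructing} in the paper and you are right that it is a genuine technical step)—but the final reduction step fails as described.

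You propose to substitute a fixed gadget $K_{w_0}^{(n)}$ into \emph{every} edge of $H$ so that $R(H';p)$ encodes $R(H;p_0)$, then ``call the approximation oracle'' once and recover $R(H;p_0)$ ``exactly via standard interpolation and precision-boosting.'' There is no such recovery. A single call to a $0.25$-abs oracle gives $|R(H';p)|$ only up to a fixed multiplicative factor $e^{0.25}$, and a $0.25$-arg oracle gives the argument only up to $\pm 0.25$ radians; neither error shrinks with $|H|$ or with the gadget precision $\varepsilon$. Knowing that $R(H;p_0)$ is an algebraic (or rational) number of bounded height does not let you round a constant-factor estimate to the exact value, and there is nothing to ``interpolate'' because the oracle only answers at the fixed $p$. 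This is precisely why the paper never attempts to compute $R(H;p)$ directly from the oracle. Instead it targets \emph{ratios}: it picks a single edge $e$, notes via Lemma~\ref{lem:eei} that implementing a gadget $G'$ on $e$ realizes the quantity $R(H;p)+\bigl(y_{G'}(p)-(p+1)\bigr)R(H\setminus e;p)$, and then makes \emph{many} oracle calls with different $G'$ (hence different $y_{G'}(p)$, reachable by Theorem~\ref{thm:constructing}) to perform a binary-search/box-shrinking on the unknown ratio $-R(H;p)/R(H\setminus e;p)$ (Theorem~\ref{thm:box-shrinking}). Each $0.25$-approximate answer is not used to estimate a value but only to decide on which side of an implemented line/threshold the true value lies; after $O(\mathrm{poly}(|H|))$ calls the ratio is pinned down to a box of size below the separation of algebraic numbers of bounded height (Lemma~\ref{lem:height bounds}), at which point LLL-based rounding (Kannan--Lenstra--Lov\'asz) extracts the exact algebraic value. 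Finally these ratios are combined telescopically along a deletion--contraction sequence, with care to handle edges where $R(H\setminus e;p)=0$ (Theorems~\ref{thm:compute ratio} and~\ref{thm:telescoping}). The missing idea in your proposal is this entire adaptive binary-search mechanism; without it a constant-factor oracle is provably too weak to yield an exact answer, and the reduction does not go through.

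A smaller point: you also do not need a detour through a second evaluation point $p_0$; the paper's reduction is directly from exact evaluation at the \emph{same} algebraic $p$ (Theorem~\ref{thm:vertigan} holds for every algebraic $p\notin\{0,1\}$), which avoids a further edge-replacement layer and the associated accounting.
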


This theorem has the following concrete corollary.
\begin{corollary}\label{cor:disk is hard}
For each algebraic number $p\in \mathbb{D}\setminus [0,1)$ both the problems \textsc{Approx-Abs-Planar-Rel$(p)$} and \textsc{Approx-Arg-Planar-Rel$(p)$} are \textsc{\#P}-hard.
\end{corollary}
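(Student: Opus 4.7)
The plan is to apply Theorem~\ref{thm:main hard} with the single-edge two-terminal graph $G_0 = K_2$. This graph is planar with its source and sink trivially on a common (outer) face, so both conclusions of the theorem become available. Moreover $R(K_2;p) = 1-p$ and $S(K_2;p) = p$, so the product $R(K_2;p)S(K_2;p) = p(1-p)$ is nonzero on $\mathbb{D}\setminus[0,1)$ (as this set excludes both $0$ and $1$). Hence the only nontrivial step is to verify that each algebraic $p\in\mathbb{D}\setminus[0,1)$ belongs to $\mathcal{D}_{K_2}\cup\mathcal{D}^{\mathbb{R}}_{K_2}$.

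For non-real $p$, I would use the witness $G = K_2\in\mathcal{H}_{K_2}$ directly. We have $\hat{y}_{K_2}(p) = 1/p$, with $|1/p|>1$ since $|p|<1$ and $1/p\notin\mathbb{R}$ since $p\notin\mathbb{R}$. Combined with the trivial observation that $\mathcal{E}(K_2) = \emptyset$ (indeed $R(K_2;p) = -S(K_2;p)$ would force $1-p=-p$) and with $R(K_2;p)\neq 0$, this places $p$ in $\mathcal{A}_{K_2}\setminus\{q\mid R(K_2;q)=0\}$. Theorem~\ref{thm:main equal} then gives $p\in\mathcal{D}_{K_2}$, and Theorem~\ref{thm:main hard} yields the desired \thmsc{\#P}-hardness.

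For real $p$, the only values in $\mathbb{D}\setminus[0,1)$ lie in $(-1,0)$. Here $\hat{y}_{K_2}(p) = 1/p < -1$, which certifies $p\in\mathcal{A}^{\mathbb{R}}_{K_2}$. Assuming the real analogue of Theorem~\ref{thm:main equal} (parallel to its complex counterpart, with ``$\hat y_G(p)\notin\mathbb{R}$'' replaced by ``$\hat y_G(p)<-1$'' as indicated in the definitions) yields $p\in\mathcal{D}^{\mathbb{R}}_{K_2}$, Theorem~\ref{thm:main hard} again concludes.

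The main obstacle, if there is one, is the real case: Theorem~\ref{thm:main equal} as formally stated is a statement about the complex density locus, and for real $p$ every effective interaction $y_G(p)$ is automatically real, so one cannot lift a complex density result to certify real density at a fixed real $p$. I would therefore either rely on the real version of Theorem~\ref{thm:main equal} (which the paper presumably proves alongside the complex one, given that $\mathcal{D}^{\mathbb{R}}_{G_0}$ is explicitly introduced and then used in Theorem~\ref{thm:main hard}), or, failing that, I would give a direct dynamical argument at $p\in(-1,0)$: starting from the seed $\hat{y}_{K_2}(p) = 1/p < -1$ and iterating the transformations induced on $y_G(p)$ by series and parallel compositions, one should exhibit a hyperbolic/expanding iteration on $\mathbb{R}$ whose forward orbit of effective edge interactions is dense in $\mathbb{R}$, which is exactly the membership $p\in\mathcal{D}^{\mathbb{R}}_{K_2}$ required.
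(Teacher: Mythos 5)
Your proposal is correct and follows essentially the same path as the paper: choose $G_0=K_2$, observe $\hat y_{K_2}(p)=1/p$ puts $\mathbb{D}\setminus[0,1)$ in $\mathcal{A}_{K_2}\cup\mathcal{A}^{\mathbb{R}}_{K_2}$, check $R(K_2;p)S(K_2;p)=p(1-p)\neq 0$, pass to $\mathcal{D}_{K_2}\cup\mathcal{D}^{\mathbb{R}}_{K_2}$, and apply Theorem~\ref{thm:main hard}. The one concern you flag, availability of a real analogue of the activity--density equivalence, is already supplied by Proposition~\ref{prop:active vs density}, which is stated and proved for both the complex and the real loci and is precisely what the paper cites here, so your fallback dynamical argument for $p\in(-1,0)$ is unnecessary.
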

\begin{proof}
Since $\hat{y}_{K_2}(p)=1/p$, it follows that $\mathcal{A}_{K_2}\cup \mathcal{A}_{K_2}^{\mathbb{R}}=\mathbb{D}\setminus[0,1)$.
Since $R(K_2;p)S(K_2;p)=p(1-p)\neq 0$ for any $p\in \mathbb{D}\setminus\{0\}$, it follows by Theorem~\ref{thm:main equal} that $\mathcal{A}_{K_2}=\mathcal{D}_{K_2}$. 
Our proof of Theorem~\ref{thm:main equal} also implies that $\mathcal{A}^{\mathbb{R}}_{K_2}\setminus \{p\in \mathbb{R}\mid R(G_0;p)\neq 0\}\subseteq \mathcal{D}^{\mathbb{R}}_{K_2}\subseteq \mathcal{A}^{\mathbb{R}}_{K_2}$
(See Proposition~\ref{prop:active vs density} below.) Therefore proving that $\mathcal{D}_{K_2}\cup\mathcal{D}^{\mathbb{R}}_{K_2}=\mathbb{D}\setminus [0,1)$.
Since $K_2$ is planar and has its two terminals on the same face, the result follows from Theorem~\ref{thm:main hard}.
\end{proof}
This corollary should be compared with the fact that for $p\in [0,1)$ approximating $R(G;p)$ is easy for all graphs in the sense that there exists a randomized algorithm that on input of a graph $G$ and $\varepsilon>0$ approximates $R(G;p)$ within a multiplicative factor $\exp(\varepsilon)$ in time polynomial in $n/\varepsilon$ due to Karger~\cite{KargerFPRASallterminal}. See also~\cite{ReliseasyGuoJerrum}. 
Corollary~\ref{cor:disk is hard} thus indicates a clear distinction between the complexity of approximating $R(G;p)$ for positive and non-positive values of $p$ inside the unit disk $\mathbb{D}$.

\subsection*{Organization and approach}
Our proofs are based on the framework developed in~\cites{main-roots,main-approx} which in turn take inspiration from~\cites{galanis-2022,chaoticratios}.

In~\cites{main-roots,main-approx} several results concerning zeros and hardness are proved for the chromatic polynomial and more generally the partition function of the random cluster model, both of which are evaluations of the Tutte polynomial.
While the reliability polynomial is also an evaluation of the Tutte polynomial, the framework developed in~\cites{main-roots,main-approx} does not directly apply to it.
However many of the key ideas do. 
The main effort to prove our main theorems is to use these key ideas in the context of the reliability polynomial.

In the next section we gather some preliminaries and state some basic operations and graph constructions.
In Section~\ref{sec:proof main} we provide a proof of Theorem~\ref{thm:main equal}, which we split into several parts. In Section~\ref{sec:two prop} we prove Propositions~\ref{prop:zeros outside disk} and~\ref{prop:unbounded zeros?} and in Section~\ref{sec:density implies hardness} we prove Theorem~\ref{thm:main hard}.

\section{Preliminaries}\label{sec:prel}
Here we collect terminology, notation and preliminaries that will be used frequently in the remainder of the paper. 
Much of what we include here is well known, see e.g.~\cites{brown-1992,royle-2004,brown-2017}.
We give proofs for the sake of completeness and occasionally to be able to build on these proofs for certain specific properties that we need.
\subsection{Graph operations and reliability}\label{sec:graph operations}




The reliability and split reliability polynomials of series-parallel compositions of two-terminal graphs can be computed recursively using the following identities:
\begin{lemma}\label{lem:recursion}
    Let $G_1,G_2$ be two-terminal graphs. We have the following identities on polynomials:
    \begin{align*}
        \SP(G_1\parallel G_2;p)&=\SP(G_1;p)\SP(G_2;p),\\
        R(G_1\series G_2;p)&=R(G_1;p)R(G_2;p),\\
        R(G_1\parallel G_2;p)&=R(G_1;p)\SP(G_2;p)\!+\!\!R(G_2;p)\SP(G_1;p)\!+\!\!R(G_1;p)R(G_2;p),\\
        \SP(G_1\series G_2; p)&=R(G_1;p)\SP(G_2;p)+R(G_2;p)\SP(G_1;p).
    \end{align*}
\end{lemma}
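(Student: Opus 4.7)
The plan is to derive each identity by decomposing a spanning edge subset $A$ of the composite graph as $A = A_1 \sqcup A_2$ with $A_i \subseteq E(G_i)$, and then identifying exactly which pairs $(A_1, A_2)$ contribute to the reliability (or split reliability) of the composition. Since the edge sets $E(G_1)$ and $E(G_2)$ are disjoint in both $G_1 \parallel G_2$ and $G_1 \series G_2$, the weight $(1-p)^{|A|}p^{|E|-|A|}$ factors as the product of weights associated with $A_1$ and $A_2$, and the identities then reduce to polynomial factorisations of the corresponding generating functions.

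The key combinatorial observation is that in both compositions the vertex sets satisfy $V(G_1)\cap V(G_2)\subseteq\{s,t\}$, so every path between a non-terminal vertex of $V(G_i)$ and any vertex outside $V(G_i)$ must traverse a terminal. This lets me classify each $A_i$ by one of three mutually exclusive types: (C) $A_i$ spans a connected subgraph of $G_i$; (S) $A_i$ is an $s_i$-$t_i$ split of $G_i$; (O) neither. The contributions of types (C) and (S) to the factorised sum are exactly $R(G_i;p)$ and $\SP(G_i;p)$ respectively, while type (O) contributes nothing to the reliability or split reliability of the composition.

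Given this setup, I would verify each identity by a short case analysis.
\begin{itemize}
\item For $\SP(G_1\parallel G_2;p)$, the composite $A$ is an $s$-$t$ split iff both $A_1$ and $A_2$ are splits on their respective sides, because the two shared terminals remain separated and every other vertex reaches exactly one of them through its own side. This gives the product $\SP(G_1;p)\SP(G_2;p)$.
\item For $R(G_1\series G_2;p)$, the identified vertex $w:=t_1=s_2$ is a cut vertex of the composite graph, so $A$ is connected iff both $A_1$ and $A_2$ are. This gives the product $R(G_1;p)R(G_2;p)$.
\item For $R(G_1\parallel G_2;p)$, connectivity of $A$ amounts to $s$ and $t$ ending in the same component together with every other vertex; by the type classification this happens exactly when $(A_1,A_2)$ is of type (C,C), (C,S), or (S,C). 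The case (S,S) is excluded because two splits leave $s$ and $t$ in distinct components.
\item For $\SP(G_1\series G_2;p)$, an $s_1$-$t_2$ split places the cut vertex $w$ into exactly one of the two components; if $w$ lies in the $s_1$-component then $A_1$ must be of type (C) and $A_2$ of type (S), and the symmetric situation contributes the other term.
\end{itemize}

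I do not anticipate any real obstacle; the only care needed is to check in each case that every contributing configuration is accounted for exactly once and that the non-contributing ones are properly excluded. These are standard combinatorial checks based on the cut-vertex and shared-terminal structure of series and parallel compositions, and they produce the four identities directly.
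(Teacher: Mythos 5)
Your outline is correct and follows essentially the same case analysis as the paper: both arguments classify each side of the composition as connected, a split, or neither, and derive the four identities from which combinations occur. The only cosmetic difference is that the paper phrases the decomposition probabilistically for $p\in[0,1]$ (independence of the two sides) and then extends by polynomial identity, whereas you factor the weight $(1-p)^{|A|}p^{|E|-|A|}$ directly, which gives the polynomial identity for all $p$ at once.
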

\begin{proof}
    Let $p\in[0,1]$; by definition, this makes $R(G;p)$ and $\SP(G;p)$ the probabilities that $G$ remains connected/becomes an $s-t$ split respectively. We omit the variable $p$ for ease of notation. Then we have the following.
    \begin{itemize}
        \item The graph $G_1\parallel G_2$ is an $s-t$ split if and only if both $G_1$ and $G_2$ are $s-t$ splits; that is 
        $$\SP(G_1\parallel G_2)=\SP(G_1)\SP(G_2).$$
        \item Similarly, the graph $G_1\series G_2$ is connected if and only if both $G_1$ and $G_2$ are connected; that is 
        $$R(G_1\series G_2)=R(G_1)R(G_2).$$
        \item The graph $G_1\series G_2$ is an $s-t$ split if and only if exactly one of $G_1$ and $G_2$ is an $s-t$ split, and the other is connected; that is 
        $$\SP(G_1\series G_2)=R(G_1)\SP(G_2)+R(G_2)\SP(G_1).$$
        \item Finally, the graph $G_1\parallel G_2$ is connected if one between $G_1$ and $G_2$ is connected, while the other is either connected or an $s-t$ split. By inclusion-exclusion on those two conditions we get
        $$R(G_1\parallel G_2)=R(G_1)\SP(G_2)+R(G_2)\SP(G_1)+R(G_1)R(G_2).$$
    \end{itemize}
    Since the polynomial identities above hold for $p\in[0,1]$ the polynomials coincide.
\end{proof}

For a graph $G$ and an edge $e$ of $G$ we denote by $G\setminus e$ (resp. $G/e$)
 the graph obtained from $G$ by deleting (resp. contracting ) the edge $e$.
 As is well known the reliability and split reliability polynomial satisfy a deletion-contraction recurrence.
\begin{lemma}\label{lem:con-del}
    Let $G=(V,E)$ be a graph and let $e\in E$. Then
    $$R(G;p)=pR(G\setminus e;p)+(1-p)R(G/e;p).$$
    Additionally, let $G$ be two-terminal such that $e\neq \{s,t\}$. Then
    $$\SP(G;p)=p\SP(G\setminus e;p)+(1-p)\SP(G/e;p).$$
\end{lemma}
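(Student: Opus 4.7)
The plan is to mirror the probabilistic proof of Lemma~\ref{lem:recursion}. For $p\in[0,1]$, both $R(G;p)$ and $\SP(G;p)$ are probabilities under the model in which every edge of $G$ is independently removed with probability $p$. I would condition on the state of the distinguished edge $e$ (removed with probability $p$, retained with probability $1-p$), read off the resulting conditional probabilities as reliability/split-reliability values of $G\setminus e$ and $G/e$, and finally invoke the standard fact that a polynomial identity in $p$ that holds on $[0,1]$ holds identically.

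For the reliability identity, condition on whether $e$ is removed. If $e$ is removed, the restriction of the random subgraph to $E\setminus\{e\}$ is distributed exactly as a random subgraph of $G\setminus e$ sampled in the same way, so the conditional probability that $G$ is connected equals $R(G\setminus e;p)$. If $e$ is retained, then its endpoints $u,v$ are forced into a common component, so $G$ is connected exactly when the induced random subgraph of $G/e$ (obtained by identifying $u$ and $v$) is connected; this conditional probability is $R(G/e;p)$. The law of total probability then gives the identity for all $p\in[0,1]$, hence as a polynomial identity.

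For the split-reliability identity, I would run the same argument with "is an $s$-$t$ split" in place of "is connected". If $e$ is removed, the conditional probability is $\SP(G\setminus e;p)$ verbatim. If $e$ is retained with endpoints $u,v$, the split condition forces $u$ and $v$ to be attached to the same terminal, which is precisely the split condition for the corresponding subgraph of $G/e$, yielding conditional probability $\SP(G/e;p)$. This is exactly where the hypothesis $e\neq\{s,t\}$ enters: if $e=\{s,t\}$, contracting $e$ would merge the two terminals and the notion of an $s$-$t$ split in $G/e$ would be undefined (and consistently, no $s$-$t$ split of $G$ can contain such an $e$, since $s$ and $t$ must end up in different components).

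There is no real obstacle here; the only point deserving a moment of care is the bijection between $s$-$t$ splits of $G$ containing $e$ and $s$-$t$ splits of $G/e$. This is clean because contracting $e$ merges only the components of $u$ and $v$, which already share a component whenever $e$ is included, so the connectivity classes of all other vertices are unchanged and the "every vertex reaches exactly one of $s,t$" condition transfers verbatim between $G$ and $G/e$.
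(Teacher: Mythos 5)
Your proof is correct and rests on the same decomposition the paper uses: splitting according to whether $e$ is present in the random edge set is exactly the paper's partition of the sum over connected (or split) subsets $A$ into those with $e\in A$ and those with $e\notin A$, via the same two bijections with subgraphs of $G/e$ and $G\setminus e$. The only cosmetic difference is that you phrase it as conditioning for $p\in[0,1]$ and then extend by the identity theorem, whereas the paper manipulates the defining sum directly and so never needs to restrict to $[0,1]$; both are fine, and your remark on why $e\neq\{s,t\}$ is needed for the split-reliability case is the right one.
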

\begin{proof}
    Let $G=(V,E)$ be a graph, and let $e\in E$. There are bijections between:
    \begin{itemize}
        \item connected edge subgraphs of $G$ that contain $e$ and connected edge subgraphs of $G/e$, and
        \item connected edge subgraphs of $G$ that do not contain $e$ and connected edge subgraphs of $G\setminus e$.
    \end{itemize}
    Then
    \begin{align*}
        R(G;p)&=\sum_{\substack{A\subseteq E\\(V,A)\text{ connected}}}(1-p)^{|A|}p^{E\setminus A}\\
        &=(1-p)\sum_{\substack{A\subseteq E\smallsetminus\{e\}\\(V,A\cup\{e\})\text{ connected}}}(1-p)^{|A|}p^{|E\setminus A|-1}
        +p\sum_{\substack{A\subseteq E\smallsetminus\{e\}\\(V,A)\text{ connected}}}(1-p)^{|A|}p^{{|E\setminus A|-1}}\\
        &=(1-p)R(G/e;p)+pR(G\setminus e;p).
    \end{align*}
    The same bijections hold for $s-t$ splits, and as such the same relation holds for the split reliability polynomial. 
\end{proof}
\subsection{Edge interactions}
We denote the Riemann sphere $\C\cup \{\infty\}$ by $\Cext$.
For $p\in \C\setminus\{1\}$ we define the following M\"obius transformation
$$f_p(z)=1+\frac{1-p}{z-1}$$
and observe that it is an involution, that is, for all $z\in\Cext\ f_p(f_p(z))=z$.
Moreover observe that for a two-terminal graph $G$ we have
\begin{equation}\label{eq:virtual is mobius effective}
 f_p(y_G(p))=\hat{y}_G(p),
   \end{equation}
that is, the virtual edge interaction is $f_p$ applied to the effective edge interaction.

From the properties of the reliability and split reliability polynomials for series-parallel composition we obtain the following properties for the edge interactions.
\begin{lemma}\label{lem:ei-comp}
    Let $G_1, G_2$ be two two-terminal graphs and let $p\in\Cext$. Then the following identities hold as rational functions:
    $$y_{G_1\series G_2}=y_{G_1}+y_{G_2}-1;$$
    $$\hat{y}_{G_1\parallel G_2}=\hat{y}_{G_1}\cdot \hat{y}_{G_2}.$$
    Additionally, for any fixed $p_0\in\Cext$ and for any two-terminal graphs $G_1,G_2$ such that \\$\{y_{G_1}(p_0),y_{G_2}(p_0)\}\neq\{\infty\}$ we have
    $$y_{G_1\series G_2}(p_0)=y_{G_1}(p_0)+y_{G_2}(p_0)-1$$
    and for any two-terminal graphs $G_1,G_2$ such that $\{\hat{y}_{G_1}(p_0),\hat{y}_{G_2}(p_0)\}\neq\{0,\infty\}$ we have
    $$\hat{y}_{G_1\parallel G_2}(p_0)=\hat{y}_{G_1}(p_0)\cdot \hat{y}_{G_2}(p_0).$$
\end{lemma}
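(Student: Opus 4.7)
The plan is to first establish the two identities as identities of rational functions in $p$ by direct substitution using Lemma~\ref{lem:recursion}, and then deduce the pointwise versions by extending everything to $\Cext$-valued maps and checking that the stated hypotheses precisely avoid the indeterminate forms of $\Cext$-arithmetic.

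For the series identity, I would start from the definition $y_G(p)-1 = (1-p)\SP(G;p)/R(G;p)$, apply Lemma~\ref{lem:recursion} to the numerator and denominator separately, and split the resulting fraction into two pieces:
$$y_{G_1\series G_2}-1 \;=\; (1-p)\,\frac{R(G_1)\SP(G_2)+R(G_2)\SP(G_1)}{R(G_1)R(G_2)} \;=\; (y_{G_1}-1)+(y_{G_2}-1).$$
Adding $1$ to both sides gives the claim. For the parallel identity the analogous computation starts from $\hat{y}_G(p)-1 = R(G;p)/\SP(G;p)$ and yields
$$\hat{y}_{G_1\parallel G_2}-1 \;=\; \frac{R(G_1)\SP(G_2)+R(G_2)\SP(G_1)+R(G_1)R(G_2)}{\SP(G_1)\SP(G_2)} \;=\; (\hat{y}_{G_1}-1)+(\hat{y}_{G_2}-1)+(\hat{y}_{G_1}-1)(\hat{y}_{G_2}-1),$$
and the right-hand side simplifies to $\hat{y}_{G_1}\hat{y}_{G_2}-1$ by the identity $(a-1)(b-1)+(a-1)+(b-1) = ab-1$. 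Adding $1$ yields the second claim.

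For the pointwise statements, note that every rational function in $p$ extends canonically to a holomorphic map $\Cext\to\Cext$; hence for each $p_0\in\Cext$ the values $y_G(p_0)$ and $\hat{y}_G(p_0)$ are well-defined elements of $\Cext$. The rational-function identities, interpreted pointwise, therefore hold at $p_0$ whenever the right-hand side is well-defined in $\Cext$. For addition on $\Cext$ the only indeterminate unordered pair is $\{\infty,\infty\}$; the hypothesis $\{y_{G_1}(p_0),y_{G_2}(p_0)\}\neq\{\infty\}$ is precisely the condition that rules this out. Similarly, multiplication on $\Cext$ is well-defined except on the unordered pair $\{0,\infty\}$, which is exactly what the second hypothesis excludes. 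In the remaining cases the identity on the left-hand side agrees (as a value in $\Cext$) with the right-hand side by continuity of $\Cext$-arithmetic where it is defined.

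The main obstacle is purely bookkeeping at the pointwise level: one has to be careful that when, say, $R(G_1;p_0)=0$ (so that $y_{G_1}(p_0)=\infty$), one is reading off the correct canonical value of $y_{G_1\series G_2}(p_0)\in\Cext$ from its reduced rational form, rather than from a formal substitution in the recursion. Once one accepts that $y_G,\hat{y}_G\colon\Cext\to\Cext$ are the natural holomorphic extensions, the pointwise identity follows from the rational identity outside an analytic set, and hence at every $p_0$ at which the right-hand side is unambiguous. Beyond this small care near the poles of $R$ and $\SP$, the proof is essentially algebraic manipulation.
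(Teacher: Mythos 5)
Your proof is correct and follows essentially the same route as the paper: both derive the rational-function identities from the recursion in Lemma~\ref{lem:recursion} by direct algebraic substitution. Your decomposition via the shifted quantities $y_G-1$ and $\hat{y}_G-1$ is a cosmetic streamlining (the paper manipulates $y_G$ and $\hat{y}_G$ directly), but it is the same computation. The one place where you do more than the paper is the pointwise claims: the paper disposes of these with the single remark that the identities hold ``on all points where the ratios are well-defined,'' whereas you observe explicitly that the sum (resp.\ product) of two rational functions, viewed as $\Cext$-valued holomorphic maps, equals the pointwise $\Cext$-sum (resp.\ $\Cext$-product) by continuity wherever that arithmetic is unambiguous, and that the stated hypotheses $\{y_{G_1}(p_0),y_{G_2}(p_0)\}\neq\{\infty\}$ and $\{\hat{y}_{G_1}(p_0),\hat{y}_{G_2}(p_0)\}\neq\{0,\infty\}$ exclude exactly the two indeterminate cases. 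This is a cleaner justification of a part of the lemma that the paper treats rather tersely.
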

\begin{proof}
    We use the recursive identities proven in Lemma~\ref{lem:recursion}, and omit the variable $p$ when possible.

    Let $G_1, G_2$ be two two-terminal graphs; observe that
    \begin{align*}
        y_{G_1\series G_2}&=\frac{(1-p)\SP(G_1\series G_2)}{R(G_1\series G_2)}+1\\
        &=\frac{(1-p)\left(R(G_1)\SP(G_2)+R(G_2)\SP(G_1)\right)}{R(G_1)R(G_2)}+1\\
        &=\frac{(1-p)R(G_2)\SP(G_1)}{R(G_1)R(G_2)}+\frac{(1-p)R(G_1)\SP(G_2)}{R(G_1)R(G_2)}+1\\
        &=\left(\frac{(1-p)\SP(G_1)}{R(G_1)}+1\right)+\left(\frac{(1-p)\SP(G_2)}{R(G_2)}+1\right)-1\\
        &=y_{G_1}+y_{G_2}-1
    \end{align*}
    and
    \begin{align*}
        \hat{y}_{G_1\parallel G_2}&=\frac{R(G_1\parallel G_2)}{\SP(G_1\parallel G_2)}+1\\
        &=\frac{\SP(G_1\parallel G_2)+R(G_1\parallel G_2)}{\SP(G_1\parallel G_2)}\\
        &=\frac{\SP(G_1)\SP(G_2)+R(G_1)\SP(G_2)+R(G_2)\SP(G_1)+R(G_1)R(G_2)}{\SP(G_1)\SP(G_2)}\\
        &=\frac{(\SP(G_1)+R(G_1))(\SP(G_2)+R(G_2))}{\SP(G_1)\SP(G_2)}\\
        &=\hat{y}_{G_1}\hat{y}_{G_2}.
    \end{align*}
    So the identities hold as rational functions; that is, on all points where the ratios are well-defined.
\end{proof}
\begin{lemma}\label{lem:multi-series}
    Let $G$ be a two-terminal graph, and denote by $G^{\series_n}$ the series composition of $n$ copies of $G$. Then
    \[\hat{y}_{G^{\series_n}}(p)=\frac{1}{n}\hat{y}_G(p)+\frac{n-1}{n}.\]
\end{lemma}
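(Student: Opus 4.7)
The plan is to reduce everything to the simple additive rule for the effective edge interaction $y_G$ under series composition, and then translate to $\hat y_G$ via the involution $f_p$.

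First I would observe that Lemma~\ref{lem:ei-comp} gives
\[y_{G_1\series G_2}=y_{G_1}+y_{G_2}-1,\]
which is cleanest after the change of variable $u_G:=y_G-1$: in these coordinates series composition becomes plain addition, $u_{G_1\series G_2}=u_{G_1}+u_{G_2}$. Iterating this (a one-line induction on $n$) yields $u_{G^{\series_n}}=n\,u_G$, i.e.\
\[y_{G^{\series_n}}-1=n\,(y_G-1).\]

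Next I would use the Möbius involution $f_p(z)=1+\tfrac{1-p}{z-1}$ together with \eqref{eq:virtual is mobius effective}, which says $\hat y_G=f_p(y_G)$, equivalently
\[\hat y_G-1=\frac{1-p}{y_G-1}.\]
Substituting the series identity into this gives
\[\hat y_{G^{\series_n}}-1=\frac{1-p}{y_{G^{\series_n}}-1}=\frac{1-p}{n(y_G-1)}=\frac{1}{n}\bigl(\hat y_G-1\bigr),\]
and rearranging produces exactly $\hat y_{G^{\series_n}}=\tfrac{1}{n}\hat y_G+\tfrac{n-1}{n}$.

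There is essentially no obstacle: the identity is a rational-function identity, and since $f_p$ is a M\"obius map on the Riemann sphere $\widehat{\mathbb C}$, the manipulations are valid on the level of rational functions in $p$. The only mild care needed is to note that the inductive step for $y_{G^{\series_n}}$ goes through as a rational identity by Lemma~\ref{lem:ei-comp}, so we never have to argue pointwise about degenerate values.
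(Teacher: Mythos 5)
Your proof is correct and follows essentially the same route as the paper: iterate the additive series rule from Lemma~\ref{lem:ei-comp} to get $y_{G^{\series_n}}-1=n(y_G-1)$, then convert to $\hat y$ via $f_p$ using the identity $\hat y_G-1=\tfrac{1-p}{y_G-1}$. The $u_G=y_G-1$ coordinate change is a minor cosmetic difference; the substance is the same.
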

\begin{proof} 
    Observe that by iterating Lemma~\ref{lem:ei-comp} we obtain $y_{G^{\series_n}}(p)=ny_G(p)$ $-(n-1)$. 
    Then we have
    $$\hat{y}_{G^{\series_n}}(p)=\frac{1-p}{n(y_G(p)-1)}+1=\frac{1-p}{n\frac{1-p}{\hat{y}_G(p)-1}}+1=\frac{1}{n}\hat{y}_G(p)+\frac{n-1}{n}.$$
\end{proof}
\subsection{Gadgets}\label{sec:gadget}
Take a two-terminal graph $(G,s,t)$, which we refer to as a \emph{gadget}, and another graph $H$. We can substitute the gadget into an edge $e$ of $H$ by removing $e$, adding a copy of $G$, and identifying the source and sink of $G$ with the endpoints of $e$; we denote the resulting graph as $H(G)_e$. See Figure~\ref{fig:gadget} below for an example. While technically it matters which endpoint of $e$ is identified with $s$, for the purpose of the reliability polynomial it does not matter as we show below and therefore we don't specify the choice of identification.

 \begin{center}
    \vspace{12pt}
\begin{figure}[ht]
\begin{subfigure}{0.29\textwidth}
\centering
    \begin{tikzpicture}[scale=0.5]
    \node[circle,fill,inner sep=2pt,label=above:$s$] (s) at (0,6) {};
    \node[circle,fill,inner sep=2pt,label=below:$t$] (t) at (0,0) {};
    \draw[thick] (s) edge [bend left=45] (t);
    \draw[thick] (s) edge [bend left=30] (t);
    \draw[thick] (s) edge [bend left=15] (t);
    \draw[thick] (s) edge [bend right=45] (t);
    \draw[thick] (s) edge [bend right=30] (t);
    \draw[thick] (s) edge [bend right=15] (t);
\end{tikzpicture}
    \caption{$G$}
\vspace{10pt}\end{subfigure}
\hfill
\begin{subfigure}{0.29\textwidth}
\centering
\begin{tikzpicture}[scale=0.5]
    \node[circle,fill,inner sep=2pt] (s) at (2,6) {};
    \node[circle,fill,inner sep=2pt] (0) at (0,3) {};
    \node[circle,fill,inner sep=2pt] (1) at (4,3) {};
    \node[circle,fill,inner sep=2pt,label=below:\phantom{t}] (t) at (2,0) {};
    \draw[thick] (s) -- (0) -- (t);
    \draw[thick] (s) -- (1) -- (t);
    \draw[thick] (0) edge["$e$"] (1);
\end{tikzpicture}
    \caption{$H$}
\vspace{10pt}\end{subfigure}
\hfill
\begin{subfigure}{0.39\textwidth}
\centering
\begin{tikzpicture}[scale=0.5]
    \node[circle,fill,inner sep=2pt] (s) at (2,6) {};
    \node[circle,fill,inner sep=2pt] (0) at (0,3) {};
    \node[circle,fill,inner sep=2pt] (1) at (4,3) {};
    \node[circle,fill,inner sep=2pt,label=below:\phantom{t}] (t) at (2,0) {};
    \draw[thick] (s) -- (0) -- (t);
    \draw[thick] (s) -- (1) -- (t);
    \draw[thick] (0) edge [bend left=45] (1);
    \draw[thick] (0) edge [bend left=30] (1);
    \draw[thick] (0) edge [bend left=15] (1);
    \draw[thick] (0) edge [bend right=45] (1);
    \draw[thick] (0) edge [bend right=30] (1);
    \draw[thick] (0) edge [bend right=15] (1);
\end{tikzpicture}
    \caption{$H(G)_e$}
\vspace{10pt}\end{subfigure}
        
\caption{An example of gadget substitution.}
\label{fig:gadget}
\end{figure}
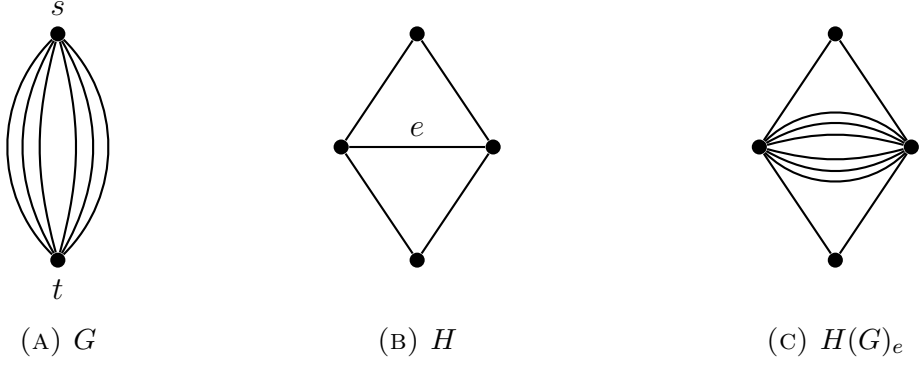\vspace{-30pt}
\end{center}

\begin{lemma}\label{lem:single-replacement}
    Let $H=(V,E)$ be a graph, $G$ be a gadget, $e$ be an edge of $H$ that is not a loop. 
    Then
    $$R(H(G)_e;p)=\SP(G;p)R(H\setminus e;p)+R(G;p)R(H/e;p).$$
    Now let $H$ be two-terminal. Then
    $$\SP(H(G)_e;p)=\SP(G;p)\SP(H\setminus e;p)+R(G;p)\SP(H/e;p).$$
\end{lemma}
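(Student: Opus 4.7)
The plan is to mimic the probabilistic proof strategy used for Lemmas~\ref{lem:recursion} and~\ref{lem:con-del}. Fix $p\in[0,1]$ and interpret $R(\,\cdot\,;p)$ and $\SP(\,\cdot\,;p)$ as the probabilities that the random edge-subgraph (each edge kept independently with probability $1-p$) is connected, respectively an $s$-$t$ split. The edge set of $H(G)_e$ is the disjoint union of the edges of the inserted copy of $G$ and the edges of $H\setminus e$, so the random subgraphs induced on these two pieces are \emph{independent}. Once the identity is established for $p\in[0,1]$ it will extend to a polynomial identity as in Lemma~\ref{lem:recursion}.

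The heart of the argument is the following structural dichotomy. Let $s_G,t_G$ denote the terminals of the inserted copy of $G$, identified with the two endpoints of $e$, and let $A_G\subseteq E(G)$ and $A_H\subseteq E(H)\setminus\{e\}$ be the surviving edge-sets. I claim that $H(G)_e$ with surviving edges $A_G\cup A_H$ is connected if and only if \emph{exactly one} of the following holds:
\begin{itemize}
\item[(i)] $(V(G),A_G)$ is connected and $(V(H/e),A_H)$ is connected;
\item[(ii)] $(V(G),A_G)$ is an $s_G$-$t_G$ split and $(V(H\setminus e),A_H)$ is connected.
\end{itemize}
In case (i) the copy of $G$ collapses all of $V(G)$ into the single merged endpoint of $e$, so connectedness of $H(G)_e$ coincides with connectedness of $H/e$. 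In case (ii) each side of the split glues to one endpoint of $e$ while providing no further path between them through $G$, so connectedness of $H(G)_e$ coincides with connectedness of $H\setminus e$. In any remaining situation some component of $(V(G),A_G)$ avoids both $s_G$ and $t_G$ and hence cannot be joined to the rest of $H(G)_e$. Because $A_G$ and $A_H$ are independent, the probabilities attached to (i) and (ii) multiply, and summing them gives
\begin{equation*}
R(H(G)_e;p)=R(G;p)R(H/e;p)+\SP(G;p)R(H\setminus e;p).
\end{equation*}

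The second identity is obtained by rerunning the same dichotomy while asking instead whether $H(G)_e$ is an $s_H$-$t_H$ split: in case (i) the question reduces to whether $H/e$ is an $s_H$-$t_H$ split, and in case (ii) to whether $H\setminus e$ is one, because in both cases the $G$-side contributes no extra $s_H$-$t_H$ path and merely glues each of its vertices into a component pinned to one endpoint of $e$. Summing the two products of independent probabilities yields the claimed formula. The only non-routine point in the proof is verifying the dichotomy (i)-(ii) and ruling out spurious contributions from subgraphs of $G$ that are neither connected nor $s_G$-$t_G$ splits; once this is done the calculation reduces to mechanical bookkeeping with independent probabilities, mirroring the style of the earlier lemmas in this section.
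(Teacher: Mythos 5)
Your proposal is correct and follows essentially the same argument as the paper: fix $p\in[0,1]$, exploit the independence of the surviving edges in the inserted copy of $G$ and in $H\setminus e$, decompose connectivity of $H(G)_e$ according to whether the $G$-part is connected (reducing to $H/e$) or an $s_G$-$t_G$ split (reducing to $H\setminus e$), observe that no other configuration of $A_G$ allows connectivity, and finally extend the probabilistic identity to all $p\in\C$ by the usual polynomial-coincidence argument. The treatment of the split analogue is likewise the same.
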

\begin{proof}
    Let $H=(V(H),E(H))$ be a graph and $G=(V(G),E(G))$ a gadget; let $H(G)_e=(V,E)$. Consider the structure of a connected edge subgraph of $H(G)_e$; every vertex of $G$ has to be connected to at least one of $s,t$ to be connected to the vertices of $H$.
    A subset $A\subseteq E$ naturally decomposes into $E_H\cup E_G$, with $A_H\subseteq E(H)$, $A_G\subseteq E(G)$ and $E_H\cap E_G=\emptyset$.
    Then a spanning subgraph $(V,A)$ of $H(G)_e$ is connected if and only if one of the following happens:
    \begin{itemize}
        \item $(V(G),A_G)$ is a connected subgraph of $G$ and $A_{H}$ induces a connected subgraph of $H/e$, which are in bijection with the connected edge subgraphs of $H$ which contain $e$ (see the proof of Lemma~\ref{lem:con-del}),
        \item $(V(G),A_{G})$ is a split subgraph of $G$ and $A_{H}$ induces a connected subgraph of $H\setminus e$ which are in bijection with the connected subgraphs of $H$ which do not contain $e$.
    \end{itemize}
    Then for $p\in [0,1]$ the probability of $H(G)_e$ being connected is thuis given by
    $$R(H(G)_e;p)=\SP(G;p)R(H\setminus e;p)+R(G;p)R(H/e;p)$$
    and since those are polynomial identities, they hold for all $p\in\C$.

    The same bijections hold for $s-t$ splits, and so the same relation holds for the split reliability polynomial.
\end{proof}


The next lemma combines the previous lemma with the deletion contraction recurrences.

\begin{lemma}\label{lem:eei}
Let $H$ be a graph, $e$ an edge of $H$, and let $G$ be a two-terminal graph and $p\in \C$ such that $R(G;p)\neq 0$. Then
    $$\frac{1-p}{R(G;p)}R(H(G)_e;p)=R(H;p)+\left(y_G(p)-(p+1)\right)R(H\setminus e;p).$$
\end{lemma}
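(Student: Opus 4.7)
The plan is to combine the gadget substitution formula from Lemma~\ref{lem:single-replacement} with the deletion-contraction recurrence from Lemma~\ref{lem:con-del}, and then rewrite the result using the definition of the effective edge interaction $y_G(p)$. All three ingredients are already on the table, so the proof should be a short algebraic manipulation rather than anything conceptual.

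First I would start from Lemma~\ref{lem:single-replacement}, which gives
\[R(H(G)_e;p)=\SP(G;p)R(H\setminus e;p)+R(G;p)R(H/e;p),\]
and multiply both sides by $(1-p)/R(G;p)$ (which is legal because $R(G;p)\neq 0$) to obtain
\[\frac{1-p}{R(G;p)}R(H(G)_e;p)=\frac{(1-p)\SP(G;p)}{R(G;p)}R(H\setminus e;p)+(1-p)R(H/e;p).\]
Next I would rewrite the first term on the right using the definition \eqref{eq:def effective}, which says exactly that $(1-p)\SP(G;p)/R(G;p)=y_G(p)-1$.

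Then I would use Lemma~\ref{lem:con-del} in the form $(1-p)R(H/e;p)=R(H;p)-pR(H\setminus e;p)$ to replace the second term. Substituting both gives
\[\frac{1-p}{R(G;p)}R(H(G)_e;p)=(y_G(p)-1)R(H\setminus e;p)+R(H;p)-pR(H\setminus e;p),\]
and collecting the coefficients of $R(H\setminus e;p)$ yields $y_G(p)-1-p=y_G(p)-(p+1)$, which is the desired identity.

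There is no real obstacle here: both supporting identities are in the form needed, and the only care required is to ensure we are handling $R(G;p)\neq 0$ as a formal hypothesis so that dividing by $R(G;p)$ is justified, and to note that the identity then holds as an equality of rational functions in $p$ (or equivalently at every $p$ with $R(G;p)\neq 0$). No loop assumption on $e$ beyond that already required by Lemma~\ref{lem:single-replacement} is needed.
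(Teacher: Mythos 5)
Your proof is correct and follows essentially the same route as the paper: substitute Lemma~\ref{lem:single-replacement}, eliminate $R(H/e;p)$ via the deletion--contraction identity of Lemma~\ref{lem:con-del}, and recognize $(1-p)\SP(G;p)/R(G;p)=y_G(p)-1$. The only cosmetic difference is that the paper first solves the deletion--contraction identity for $R(H/e;p)$ (noting $p\neq 1$ since $R(G;1)=0$) and then substitutes, whereas you multiply through by $(1-p)/R(G;p)$ first, thereby avoiding the division by $1-p$ altogether; both are the same algebra rearranged.
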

\begin{proof}
    We observe that for any graph with more than one vertex (specifically, for any two-terminal graph) we have $R(G;1)=0$, so we may assume $p\neq 1$.
    We use the contraction-deletion identity (Lemma~\ref{lem:con-del}) for the reliability polynomial to arrive at
    $$R(H/e;p)=\frac{R(H;p)-pR(H\setminus e;p)}{1-p}.$$
    We then substitute this into Lemma~\ref{lem:single-replacement} to obtain
    \begin{align*}
        \frac{1-p}{R(G;p)}R(H(G)_e;p)&=\frac{1-p}{R(G;p)}\left(\SP(G;p)R(H\setminus e;p)+R(G;p)R(H/e;p)\right)\\
        &=(1-p)\frac{\SP(G;p)}{R(G;p)}R(H\setminus e;p)+R(H;p)-pR(H\setminus e;p)\\
        &=R(H;p)+(y_G(p)-(p+1))R(H\setminus e;p),
    \end{align*}
    as desired.
\end{proof}

We can also replace every edge of a graph $H$ with a two-terminal graph $G$.
\begin{lemma}\label{lem:substitution}
    Let $H,G$ be two-terminal graphs, with $G$ connected and let $H(G)$ be the graph obtained by substituting $G$ into every edge of $H$. Then the virtual edge interaction of $H(G)$ does not depend on how we orient the edges of $H$ and satisfies
    \begin{equation}
     \hat{y}_{H(G)}(p)=\hat{y}_H\left(\frac{1}{\hat{y}_G(p)}\right).\label{eq:substitution}  
    \end{equation}
\end{lemma}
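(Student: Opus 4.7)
My plan is to iterate the single-edge substitution formula of Lemma~\ref{lem:single-replacement} across all edges of $H$ and then recognise the resulting expansion as a rescaled evaluation of $R(H;\cdot)$ and $\SP(H;\cdot)$ at the point $q = 1/\hat{y}_G(p)$.

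More concretely, since substituting a copy of $G$ into one edge of $H$ does not alter any other edge of $H$ (equivalently, deletion and contraction of an unsubstituted edge commute with substitution into a different edge), I would induct on $|E(H)|$, peeling off one edge at a time via Lemma~\ref{lem:single-replacement}. This should yield
\begin{equation*}
R(H(G);p) \;=\; \sum_{\substack{A\subseteq E(H)\\ (V(H),A)\text{ connected}}} R(G;p)^{|A|}\,\SP(G;p)^{|E(H)|-|A|},
\end{equation*}
and the same argument, using the split version of Lemma~\ref{lem:single-replacement}, should give the analogous expansion for $\SP(H(G);p)$ with ``connected'' replaced by ``$s$-$t$ split''. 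Setting $q := \SP(G;p)/(R(G;p)+\SP(G;p))$, one has $1-q = R(G;p)/(R(G;p)+\SP(G;p))$, and from the definition of $\hat{y}_G$ it follows that $q = 1/\hat{y}_G(p)$. Pulling out the common factor $(R(G;p)+\SP(G;p))^{|E(H)|}$ from the two expansions would recover precisely the defining sums of $R(H;q)$ and $\SP(H;q)$ (cf.~\eqref{eq:def R(G;p)}), so $R(H(G);p)/\SP(H(G);p) = R(H;q)/\SP(H;q)$, which after adding $1$ gives $\hat{y}_{H(G)}(p) = \hat{y}_H(1/\hat{y}_G(p))$.

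Both formulas above are identities of rational functions in $p$, so it suffices to verify them at generic $p$ and in particular where all denominators are non-zero. Orientation-independence is immediate from the expansions, because $R(G;p)$ and $\SP(G;p)$ are each symmetric under swapping the source and sink of $G$, so no choice of orientation on the edges of $H$ enters the right-hand side. The only mild subtlety I anticipate is that Lemma~\ref{lem:single-replacement} requires the substituted edge to be a non-loop; loops of $H$ can be handled either by ignoring them (they do not affect $R(H;p)$ or $\SP(H;p)$, hence not $\hat{y}_H$ either) or by observing directly that substituting $G$ into a loop multiplies both $R(H(G);p)$ and $\SP(H(G);p)$ by the common factor $R(G;p)+\SP(G;p)$, which cancels in the ratio. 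I do not foresee a technically delicate step beyond this bookkeeping.
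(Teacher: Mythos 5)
Your proposal is correct and follows essentially the same route as the paper: the key identity in both cases is $R(H(G);p)=(R(G;p)+\SP(G;p))^{|E(H)|}\,R\bigl(H;1/\hat{y}_G(p)\bigr)$ together with its split analogue, established by induction on $|E(H)|$ via Lemma~\ref{lem:single-replacement}. The only cosmetic difference is that you unroll the induction into an explicit subset expansion and match it against the defining sum~\eqref{eq:def R(G;p)} at $q=\SP(G;p)/(R(G;p)+\SP(G;p))=1/\hat y_G(p)$, whereas the paper leaves the expansion implicit and closes the inductive step by invoking the deletion-contraction recurrence for $R(H;q)$; your added remarks on orientation-independence (from the $s\leftrightarrow t$ symmetry of $R$ and $\SP$) and the loop case are correct bookkeeping that the paper handles tacitly.
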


\begin{proof}
    We will show that
    \begin{align}
        R(H(G);p)&=\left(R(G;p)+\SP(G;p)\right)^{|E(H)|}R\left(H;\frac{1}{\hat{y}_G(p)}\right)\ \ \label{eq:edge replacement interaction} \text{and}\\
        \SP(H(G);p)&=\left(R(G;p)+\SP(G;p)\right)^{|E(H)|}\SP\left(H;\frac{1}{\hat{y}_G(p)}\right), \nonumber 
    \end{align}
    from which the statement of the lemma directly follows.
    Observe that $R(G;p)$ is zero as a polynomialproof if and only if $G$ is disconnected. We can then assume that $p\in [0,1]$ is such that $R(G;p)\neq 0$. We work by induction on the number of edges of $H$, starting with the case $|E(H)|=1$. We may assume $H=K_2$ (for otherwise the statement is trivial). 
    Since $K_2(G)=G$, it is not difficult to see that the desired statement holds.
    Let us next assume that $|E(H)|>1.$ 
    Let us view $H(G)$ as $H'(G)_e$ for some edge $e$ of $H$. (Here $H'$ is the graph obtained from $H(G)$ by replacing the copy of $G$ on $e$ by $K_2$.)
    By Lemma~\ref{lem:single-replacement} we then have
    \begin{align*}
        R(H(G)&;p)=\,R(G;p)R((H/e)(G);p)+\SP(G;p)R((H\setminus e)(G);p)\\
        =&\,\left(R(G;p)+\SP(G;p)\right)\!\!\Bigg(\!\!\!\left(1-\frac{1}{\hat{y}_G(p)}\right)R((H/e)(G);p)
        +\frac{1}{\hat{y}_G(p)}R((H\setminus e)(G);p)\!\!\Bigg).
    \end{align*}
Since $H/e,H\setminus e$ both have fewer edges than $H$, the statement follows by induction and the deletion contraction recurrence.
    The same steps work if we substitute $R(H(G);p)$ with $\SP(H(G);p)$. 
\end{proof}
\begin{remark}
Note that Equation~\eqref{eq:edge replacement interaction} partly motivates the terminology of effective and virtual interactions. The reliability polynomial of $H(G)$ at $p$ is up to a factor equal to the reliability polynomial of $H$ evaluated at $1/\hat{y}_G(p)$.    
\end{remark}

\subsection{Multivariate reliability and split reliability}
In this section we will define the multivariate analogue of the reliability and split reliability polynomial of a graph. As we will see this will help us to derive a formula for the the general edge replacement, where we replace each edge $e_i$ of $H$ with different two-terminal graphs $G_i$.
For a graph $G=(V,E)$ with edges $\{e_1,\dots,e_m\}$ let $p_1,\dots,p_m$ variables. Then the multivariate reliability polynomial is defined as
\[
 R(G;{\bf p})=\sum_{\substack{A\subseteq E\\ (V,A) \textrm{ connected}}} \prod_{e_i\in A} (1-p_i) \prod_{e_i\in E\setminus A} p_i
\]
and if $G$ is a two-terminal graph, then the multivariate split reliability of $G$ is defined as
\[
 S(G;{\bf p})=\sum_{\substack{A\subseteq E\\ (V,A) \textrm{ $s$--$t$ split}}} \prod_{e_i\in A} (1-p_i) \prod_{e_i\in E\setminus A} p_i
\]
Analogously to the definitions in the introduction let us define the  multivariate virtual effective edge interaction of $G$ at $\bf p$ as
\[
\hat{y}_{G}({\bf p})=\frac{R(G;{\bf p})}{S(G;{\bf p})}+1.
\]
\begin{lemma}\label{lemma:multivariate edge replacement}
Let $H$ be a two-terminal graph with edges $\{e_1,\dots,e_m\}$ and let $G_1,\dots,G_m$ be two-terminal graphs. Let $H(G_1,\dots,G_m)$ be a graph obtained from $H$ by replacing, for $i=1,\ldots,m$, the edge $e_i$ with the two-terminal graph $G_i$ (i.e. we delete the edge $e_i=(u,v)$ and we identify the source (resp. target) of $G_i$ with $u$ (resp. $v$). 
Then
\[
\hat y_{H(G_1,\dots, G_m)}(p)=\hat y_{H}({\bf p})\Big|_{p_i=\tfrac{1}{\hat y_{G_i}(p)}}
\]
as rational functions.
\end{lemma}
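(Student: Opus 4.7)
My plan is to establish the pair of multivariate edge-replacement identities
\begin{align*}
R(H(G_1,\dots,G_m); p) &= \prod_{i=1}^m \bigl(R(G_i;p) + S(G_i;p)\bigr) \cdot R(H; {\bf p})\Big|_{p_i = 1/\hat y_{G_i}(p)},\\
S(H(G_1,\dots,G_m); p) &= \prod_{i=1}^m \bigl(R(G_i;p) + S(G_i;p)\bigr) \cdot S(H; {\bf p})\Big|_{p_i = 1/\hat y_{G_i}(p)},
\end{align*}
from which the lemma follows at once by dividing, adding $1$, and observing that the common prefactor cancels. This is the natural multivariate generalization of the two identities used in the proof of Lemma~\ref{lem:substitution}.

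I would prove both identities simultaneously by induction on $m=|E(H)|$. The base case $m=0$ is immediate: all substitutions are vacuous and the multivariate polynomials reduce to the usual ones. For the inductive step, I would view $H(G_1,\dots,G_m)$ as $K(G_m)_{e_m}$, where $K = H(G_1, \dots, G_{m-1})$ still carries $e_m$ as an unreplaced edge; crucially, $K\setminus e_m = (H\setminus e_m)(G_1, \dots, G_{m-1})$ and $K/e_m = (H/e_m)(G_1, \dots, G_{m-1})$, so they fall under the inductive hypothesis. Applying Lemma~\ref{lem:single-replacement} at $e_m$ decomposes $R(H(G_1,\dots,G_m);p)$ as $S(G_m;p) R(K\setminus e_m;p) + R(G_m;p) R(K/e_m;p)$, and I would match the result against the multivariate deletion-contraction identity
\[
R(H; {\bf p}) = p_m R(H\setminus e_m; {\bf p}_{-m}) + (1-p_m) R(H/e_m; {\bf p}_{-m}),
\]
by plugging in $p_m = 1/\hat y_{G_m}(p) = S(G_m;p)/(R(G_m;p)+S(G_m;p))$ and the complementary $1-p_m = R(G_m;p)/(R(G_m;p)+S(G_m;p))$. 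The multivariate deletion-contraction itself is a one-line consequence of the definition by partitioning the sum according to whether $e_m\in A$, and the argument for $S$ is word-for-word the same.

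The main obstacle is essentially bookkeeping: aligning the prefactors and the two-term splits correctly, and handling a couple of boundary cases. Loops do not affect either connectivity or $s$--$t$ splits and can be stripped upfront. When $e_m = \{s,t\}$, the quotient $H/e_m$ ceases to be two-terminal; one interprets $S(H/e_m;\cdot) := 0$, which is consistent since no $s$--$t$ split of $H$ can contain the edge $\{s,t\}$, and the multivariate identities continue to hold. With these conventions, the induction produces polynomial identities valid wherever the denominators $S(G_i;p)$ (equivalently, the $\hat y_{G_i}(p)$) are defined, which yields the claimed equality of rational functions in $p$.
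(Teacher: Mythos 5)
Your proof is correct but takes a genuinely different route from what the paper intends. The paper omits the proof with the remark that it ``follows in exactly the same way as Lemma~\ref{lem:single-replacement}'', i.e.\ by the direct combinatorial argument: decompose any edge subset of $H(G_1,\dots,G_m)$ according to the gadget in which each edge sits, observe that for the composite to be connected (resp.\ an $s$--$t$ split) each gadget $G_i$ must itself be in either a connected or a split state, and map the tuple of gadget states to the subset $A\subseteq E(H)$ of ``on'' virtual edges. This yields in one step
\[
R(H(G_1,\dots,G_m);p)=\!\!\sum_{\substack{A\subseteq E(H)\\(V(H),A)\text{ conn.}}}\prod_{e_i\in A}R(G_i;p)\prod_{e_i\notin A}S(G_i;p),
\]
which after pulling out $\prod_i\bigl(R(G_i;p)+S(G_i;p)\bigr)$ is precisely $R(H;{\bf p})$ evaluated at $p_i=1/\hat y_{G_i}(p)$, and likewise for $S$. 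You instead prove the same two identities by induction on $|E(H)|$, applying Lemma~\ref{lem:single-replacement} one gadget at a time---this mirrors the structure of the paper's proof of Lemma~\ref{lem:substitution} rather than of Lemma~\ref{lem:single-replacement}. Both are valid; the direct argument is arguably cleaner because it sidesteps the two boundary cases (loops, and $e_m=\{s,t\}$) that your induction has to track. Your handling of those cases is correct: for a loop both $R$ and $S$ of $H(G_1,\dots,G_m)$ pick up the same factor $R(\widehat G_i;p)=R(G_i;p)+S(G_i;p)$ while $R(H;{\bf p})$ and $S(H;{\bf p})$ are independent of $p_i$, and setting $S(H/e_m;\cdot):=0$ when $e_m=\{s,t\}$ is exactly the right convention for the multivariate deletion--contraction of $S$, since no $s$--$t$ split contains the edge $\{s,t\}$.
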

We omit the proof as it follows in exactly the same way as Lemma~\ref{lem:single-replacement}.

\subsection{Normal families and the Montel-Carathèodory Theorem}
We recall here the definition of a normal family and state the Montel-Carathèodory Theorem. Further background and proofs of these results can be found in Chapter 3.2 of \cite{book-normal}.
\begin{definition}
    A family $\mathcal{F}$ of meromorphic functions on an open set $U\subseteq\Cext$ is called \emph{normal} if every sequence $(f_n)\subseteq\mathcal{F}$ contains a subsequence which converges spherically uniformly on compact subsets of $U$. 
\end{definition}
\begin{lemma}
    Let $(f_n)$ be a sequence of meromorphic functions on an open set $U\subseteq\Cext$ which converges spherically uniformly on compact subsets to $f$. Then $f$ is either a meromorphic function on $U$ or identically equal to $\infty$.
\end{lemma}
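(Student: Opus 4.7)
The plan is to argue locally around each point $z_0\in U$ and then globalize the dichotomy with a connectedness argument. Spherical uniform convergence on compacts makes $f\colon U\to \Cext$ continuous; the key quantitative fact I will exploit is that on any compact subset of $\C$ the spherical metric and the Euclidean metric are bi-Lipschitz equivalent, so that spherical uniform convergence on a compact set lying in $\C$ is the same as ordinary uniform convergence there.

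Fix $z_0\in U$. If $f(z_0)\in\C$, continuity of $f$ yields a closed disk $\overline D\subset U$ about $z_0$ on which $|f|$ is bounded; spherical uniform convergence then forces $|f_n|$ to be uniformly bounded on $\overline D$ for all sufficiently large $n$, so each such $f_n$ is pole-free on $\overline D$ and $f_n\to f$ uniformly on $\overline D$ in the Euclidean sense. Weierstrass's theorem yields that $f$ is holomorphic on $\overline D$. If instead $f(z_0)=\infty$, I would apply the same argument to the reciprocals $1/f_n$: these are again meromorphic, the inversion $z\mapsto 1/z$ is a spherical isometry (so $1/f_n\to 1/f$ spherically uniformly on compacts), and $1/f(z_0)=0$, so the previous case gives that $1/f$ is holomorphic on a neighborhood of $z_0$. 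There are then two subcases: either $1/f$ is identically zero on some neighborhood of $z_0$, in which case $f\equiv\infty$ there, or $1/f$ has an isolated zero at $z_0$, in which case $f$ is meromorphic on a neighborhood of $z_0$ with a pole at $z_0$.

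To conclude, I would set $A=\{z\in U: f\equiv\infty \text{ on some neighborhood of } z\}$, which is open by definition; the local analysis above shows $U\setminus A$ is open as well, since every point of $U\setminus A$ has a neighborhood on which $f$ is either holomorphic or has an isolated pole, and hence on which $f$ is not identically $\infty$. Since $U$ is a domain (or else one argues on each connected component), exactly one of $A$ and $U\setminus A$ is empty, giving the stated dichotomy. The main technical subtlety is the careful translation between spherical and Euclidean uniform convergence needed to rule out poles of the $f_n$ on a neighborhood of a point where $f$ takes a finite value; once that is handled, the rest reduces to Weierstrass's theorem applied to both $f_n$ and $1/f_n$ together with an elementary clopen-set argument.
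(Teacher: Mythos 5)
The paper does not actually prove this lemma: it is recorded as a known fact, and the reader is pointed to Chapter~3.2 of the cited reference on normal families for background and proofs, so there is no in-paper proof to compare against. Your argument is the standard textbook one and is essentially correct: the local dichotomy (holomorphic near a finite value of $f$, and---after passing to $1/f_n$---either identically $\infty$ or an isolated pole near an infinite value) plus the clopen argument on each connected component of $U$ is exactly how this is usually done. Two small points worth making explicit if you were to write this out in full: (i) the continuity of $f$ as a $\Cext$-valued map should be justified (it is the uniform spherical limit of continuous maps, so this is routine but is a step); and (ii) before replacing $f_n$ by $1/f_n$ near a point $z_0$ with $f(z_0)=\infty$, one should discard the finitely many $n$ for which $f_n$ is identically zero on the relevant component---for $n$ large, $f_n(z_0)$ is spherically close to $\infty$, so $f_n\not\equiv 0$ there and $1/f_n$ is a genuine meromorphic function. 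With those caveats filled in, the proposal is complete.
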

\begin{theorem}[Montel-Carathèodory]\label{thm:montel}
    Let $\mathcal{F}$ be a family of meromorphic functions on an open set $U\subseteq\Cext$. If there exist three distinct points $\{a,b,c\}\subseteq\Cext$ such that $\forall f\in\mathcal{F}$ and $\forall u\in U$ we have $f(u)\notin\{a,b,c\}$ then $\mathcal{F}$ is normal. 
\end{theorem}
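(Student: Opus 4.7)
The plan is to reduce the theorem to Picard's Little Theorem for meromorphic functions on $\C$ via a rescaling argument.

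First, I would normalize the three omitted values. Since Möbius transformations act triply transitively on $\Cext$, there is a Möbius transformation $\phi$ sending $\{a,b,c\}$ to $\{0,1,\infty\}$. Because $\phi$ is a homeomorphism of $\Cext$, it is uniformly continuous in the spherical metric, and therefore post-composition with $\phi$ preserves spherically uniform convergence on compacta. Hence $\mathcal{F}$ is normal if and only if $\{\phi\circ f:f\in\mathcal{F}\}$ is, and I may assume every $f\in\mathcal{F}$ is a holomorphic function $U\to\C\setminus\{0,1\}$ omitting $\{0,1,\infty\}$.

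Next, I would argue by contradiction. If $\mathcal{F}$ is not normal, then by Marty's theorem the spherical derivatives $f^{\#}$ are not locally uniformly bounded on $U$, so there exist $f_n\in\mathcal{F}$ and points $u_n$ in a common compact subset of $U$ with $f_n^{\#}(u_n)\to\infty$. Zalcman's rescaling lemma then furnishes, after passing to a subsequence, scalars $\rho_n\to 0^+$ and a point $u_\infty\in U$ with $u_n\to u_\infty$, such that the rescalings $g_n(\xi):=f_n(u_n+\rho_n\xi)$ converge spherically uniformly on compact subsets of $\C$ to a non-constant meromorphic function $g:\C\to\Cext$. Each $g_n$ omits the three values $0,1,\infty$, so by Hurwitz's theorem on $\C$ and the non-constancy of $g$, the limit $g$ also omits these three values. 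This contradicts Picard's Little Theorem for meromorphic functions on $\C$, completing the proof.

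The main obstacle is that this strategy offloads the substantive content to Picard's theorem, whose own proof relies on the same complex-analytic input, namely the modular $\lambda$-function presenting $\mathbb{D}$ as the universal cover of $\C\setminus\{0,1\}$. For a self-contained proof one would instead work directly with $\lambda$: on any simply connected $V\subseteq U$ each $f\in\mathcal{F}$ admits a holomorphic lift $\tilde f:V\to\mathbb{D}$ through $\lambda$, and the family $\{\tilde f\}$ is normal by the elementary bounded-family form of Montel since $|\tilde f|<1$. The delicate step is to push normality back down to $\mathcal{F}$: a locally uniform limit $\tilde g$ of lifts either maps $V$ into $\mathbb{D}$, in which case $\lambda\circ\tilde g$ serves as the meromorphic limit of the corresponding $f_n$, or, by the maximum principle, is a constant of modulus $1$, in which case one must use the cusp behaviour of $\lambda$ near $\partial\mathbb{D}$ at $\{0,1,\infty\}$ to conclude that $f_n\to\infty$ spherically uniformly. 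This boundary analysis is where the direct argument does its real work.
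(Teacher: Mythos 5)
The paper does not prove this theorem at all: it is stated as a known classical result and the reader is referred to Chapter 3.2 of \cite{book-normal} (Schiff's book on normal families), so there is no ``paper's proof'' to compare against. That said, your sketch is correct. The reduction via a M\"obius map sending $\{a,b,c\}$ to $\{0,1,\infty\}$ is legitimate (M\"obius maps are spherically bi-Lipschitz, so post-composition preserves normality), after which each $f$ is holomorphic and omits $0,1$. The contradiction argument via Marty's criterion and Zalcman's rescaling lemma is the standard modern proof: the rescaled limit $g$ is a non-constant meromorphic function on $\C$, and a Hurwitz-type argument (applied to $g_n$, $g_n-1$, and $1/g_n$) shows $g$ omits $0,1,\infty$ and is in fact entire, contradicting Picard's little theorem. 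Your second, more classical route through the modular $\lambda$-function and the elementary bounded version of Montel is also correct in outline, and your identification of the boundary case $|\tilde g|\equiv 1$ as the place where the cusp behaviour of $\lambda$ must be invoked is exactly the delicate point of that argument. Either route would serve as a self-contained proof; the paper simply chose to cite the result rather than reproduce it.
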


\section{Activity, density and zeros}\label{sec:proof main}
In this section we prove Theorem~\ref{thm:main equal}.
We start with the relation between the activity-locus and the zero-locus.

\subsection{Relating the activity-locus and the zero-locus}\label{sec:active vs zeros}
We begin by stating a lemma on the values of the virtual edge interactions (which we recall are defined in~\eqref{eq:def virtual}) at reliability zeros.
\begin{lemma}\label{lem:3point}
    Let $p\in \C$ and let $G_0$ be a two-terminal graph such that $\SP(G_0;p)\neq -R(G_0;p)$. 
    Then the following are equivalent:
    \begin{itemize}
        \item[(1)] $R(G;p)=0$ for some $G\in\mathcal{H}_{G_0}$
        \item[(2)] $\hat{y}_G(p)=1$, $\SP(G;p)\neq 0$ 
        for some $G\in \mathcal{H}_{G_0}$
        \item[(3)] $\hat{y}_G(p)\in\{\omega,\omega^2,1\}$, $\SP(G;p)\neq 0$ 
        for some $G\in \mathcal{H}_{G_0}$ and $\omega$ a primitive third root of unity.
    \end{itemize}
\end{lemma}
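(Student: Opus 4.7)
The plan is to prove (2)$\Leftrightarrow$(3) and (2)$\Leftrightarrow$(1). The easy directions---(2)$\Rightarrow$(1), (2)$\Rightarrow$(3)---are immediate: $\hat{y}_G(p)=1$ together with $\SP(G;p)\neq 0$ forces $R(G;p)=0$ by the definition $\hat{y}_G=R/\SP+1$, and $1\in\{1,\omega,\omega^2\}$. For (3)$\Rightarrow$(2), I would use that parallel composition cubes $\hat{y}$ (Lemma~\ref{lem:ei-comp}): if $\hat{y}_G(p)\in\{\omega,\omega^2\}$ and $\SP(G;p)\neq 0$, then $G\parallel G\parallel G\in\mathcal{H}_{G_0}$ satisfies $\hat{y}_{G\parallel G\parallel G}(p)=\hat{y}_G(p)^3=1$ and $\SP(G\parallel G\parallel G;p)=\SP(G;p)^3\neq 0$.

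For the substantive implication (1)$\Rightarrow$(2), I would proceed by strong induction on the size of a series-parallel construction tree for $G$. The base case $G\in\{G_0,G_0^T\}$ follows from the hypothesis: $R(G_0;p)=0$ combined with $\SP(G_0;p)\neq -R(G_0;p)=0$ gives $\SP(G_0;p)\neq 0$. In the inductive step, write $G=G_1\ast G_2$. If $\ast=\series$, then $R(G;p)=R(G_1;p)R(G_2;p)=0$ by Lemma~\ref{lem:recursion}, so induction applies to some $G_i$. If $\ast=\parallel$ and $\SP(G;p)\neq 0$, take $G^{\ast}=G$. Otherwise $\SP(G;p)=\SP(G_1;p)\SP(G_2;p)=0$; say $\SP(G_1;p)=0$. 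The identity
\[
R(G;p)=R(G_1;p)\bigl(R(G_2;p)+\SP(G_2;p)\bigr)
\]
leaves either $R(G_1;p)=0$ (induct on $G_1$) or the genuine obstacle: $R(G_1;p)\neq 0$ and $R(G_2;p)+\SP(G_2;p)=0$ with $\SP(G_2;p)\neq 0$ (if $\SP(G_2;p)=0$ as well then $R(G_2;p)=0$ and induction applies to $G_2$). In the obstacle, $\hat{y}_{G_1}(p)=\infty$ and $\hat{y}_{G_2}(p)=0$.

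To dispose of the obstacle I would prove an auxiliary claim by a secondary induction on size: given any $H_\infty\in\mathcal{H}_{G_0}$ with $\hat{y}_{H_\infty}(p)=\infty$ and any $G'\in\mathcal{H}_{G_0}$ with $\hat{y}_{G'}(p)=0$ as a value (i.e.\ $R(G';p)+\SP(G';p)=0$ and $\SP(G';p)\neq 0$), there exists $G^{\ast}\in\mathcal{H}_{G_0}$ with $R(G^{\ast};p)=0$ and $\SP(G^{\ast};p)\neq 0$; the obstacle case above feeds this with $H_\infty=G_1$ and $G'=G_2$. Since $\hat{y}_{G_0}(p)\neq 0$, decompose $G'=G'_1\ast G'_2$. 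A parallel decomposition yields $\hat{y}_{G'_i}(p)=0$ for some $i$ (both factors finite since $\SP(G';p)\neq 0$), so we recurse on $G'_i$. A series decomposition gives $y_{G'_1}(p)+y_{G'_2}(p)=p+1$, and the decisive construction is
\[
G^{\ast}=(H_\infty\series G'_1)\parallel(H_\infty\series G'_2).
\]
A direct expansion from Lemma~\ref{lem:recursion}, using $\SP(H_\infty;p)=0$, yields
\[
R(G^{\ast};p)=R(H_\infty;p)^2\bigl(R(G';p)+\SP(G';p)\bigr)=0,\qquad \SP(G^{\ast};p)=R(H_\infty;p)^2\SP(G'_1;p)\SP(G'_2;p).
\]
If both $\SP(G'_i;p)\neq 0$ we are done; otherwise some $\SP(G'_j;p)=0$ forces either $R(G'_j;p)=0$ (in which case the main induction applies to $G'_j$) or $y_{G'_j}(p)=1$, which in turn forces $\hat{y}_{G'_{3-j}}(p)=0$ and we recurse on $G'_{3-j}$.

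The main obstacle throughout is the simultaneous vanishing $R(G;p)=\SP(G;p)=0$; the resolution is the construction $(H_\infty\series G'_1)\parallel(H_\infty\series G'_2)$, which leverages the $\hat{y}=\infty$ witness to transform a $\hat{y}=0$ piece into one with $\hat{y}=1$ and nonzero $\SP$. The hypothesis $\hat{y}_{G_0}(p)\neq 0$ is precisely what prevents the secondary recursion from reaching leaves of the construction tree, thereby guaranteeing that a successful construction occurs before the recursion terminates.
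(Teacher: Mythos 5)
Your proof is correct, but it takes a genuinely different route from the paper's. The paper proves $(1)\Rightarrow(2)$ by choosing an edge-minimal $G\in\mathcal{H}_{G_0}$ with $R(G;p)=0$; minimality immediately rules out a series decomposition, and the hypothesis rules out $G\in\{G_0,G_0^T\}$, leaving a parallel decomposition $G=G_1\parallel G_2$ with $\SP(G_1;p)=0$ and $G_2$ parallel-irreducible. Then $R(G_2;p)+\SP(G_2;p)=0$, and the decisive observation is that for $G_2=H_1\series H_2$ the graph $\hat{G}_2$ obtained by identifying the two terminals coincides with $H_1^T\parallel H_2$, which therefore satisfies $R(H_1^T\parallel H_2;p)=R(G_2;p)+\SP(G_2;p)=0$ while being strictly smaller than $G$ --- contradicting minimality. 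You reach the same algebraic identity $R(H_1)\SP(H_2)+R(H_2)\SP(H_1)+R(H_1)R(H_2)=R(G_2)+\SP(G_2)$, but realize it via the larger construction $(H_\infty\series G'_1)\parallel(H_\infty\series G'_2)$, where $H_\infty$ is a witness with $\hat{y}_{H_\infty}(p)=\infty$ carried along through a secondary (mutually recursive) induction. Both approaches are sound; what the paper's terminal-identification trick buys is that you never leave the "minimal counterexample" framework, so there is a single descending measure (edge count) rather than an interleaved pair of inductions whose well-foundedness has to be tracked simultaneously. Your approach has the minor virtue of avoiding the passage through $\hat{G}_2$ (which uses the identity $R(\hat{G})=R(G)+\SP(G)$ and closure of $\mathcal{H}_{G_0}$ under $(\cdot)^T$), at the cost of building appreciably larger witnesses. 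One small remark: the paper's text for $(2)\Leftrightarrow(3)$ has the two implications swapped relative to the explanation attached to them; your version --- $(2)\Rightarrow(3)$ trivial, $(3)\Rightarrow(2)$ by cubing $\hat{y}$ via $G\parallel G\parallel G$ --- is the correct reading.
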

\begin{proof}
We start with the implication from statement (2) to statement (1).
    Note that $\hat{y}_G(p)=1$ implies
    $\frac{R(G;p)}{\SP(G;p)}=0$, from which it follows that $R(G;p)=0$. 

 To see the reverse implication let $G\in \mathcal{H}_{G_0}$ be a edge minimal such that $R(G;p)=0$.
 Now $\hat{y}_{G}(p)=\frac{0}{\SP(G;p)}+1$. 
 If $\SP(G;p)\neq 0$, this is well defined and equal to $1$. 
 So let us assume instead $\SP(G;p)=0$. 
 By minimality of $G$ we may assume by Lemma~\ref{lem:recursion} that $G$ is not the series composition of two smaller graphs in $\mathcal{H}_{G_0}$.
 Since by assumption $\SP(G_0;p)\neq -R(G_0;p)$ we have $G\neq G_0$ and thus $G$ is the parallel composition of two smaller graphs $G_1,G_2\in \Hs_{G_0}$.
 Since by Lemma~\ref{lem:recursion} $S(G_1\parallel G_2;p)=S(G_1;p)S(G_2;p)$, we may assume that $S(G_1;p)=0$ and moreover we may assume that $G_2$ itself is not the parallel composition of two smaller graphs in $\mathcal{H}_{G_0}$.
 By Lemma~\ref{lem:recursion} we have 
 \begin{align*}
 0=R(G;p)&=R(G_1\parallel G_2;p)=R(G_1;p)S(G_2;p)+S(G_1;p)R(G_2;p)+R(G_1;p)R(G_2;p)
 \\
 &=R(G_1;p)(R(G_2;p)+S(G_2;p)).
 \end{align*}
 and hence $R(G_2;p)+S(G_2;p)=0$.
By assumption $G_2$ is thus not equal to $G_0$ and must therefore by the series composition of two smaller graphs $H_1,H_2\in \mathcal{H}_{G_0}$.
Now consider the graph $\hat{G_2}$ obtained from $G_2$ by identifying its terminal vertices into a single vertex.
It is not difficult to see that $R(\hat{G_2};p)=R(G_2;p)+S(G_2;p)=0$.
Since $G_2=H_1\bowtie H_2$, it follows that as graphs $\hat{G_2}=H_1^T\parallel H_2$ and therefore $R(H_1^T\parallel H_2;p)=0$.
This is however a contradiction to the minimality assumption of $G$. We conclude that statement (2) implies statement (1).

Note that the implication from statement (3) to statement (2) is trivial and that statement (2) implies statement (3) by letting $G'=G\parallel G\parallel G$, from which it follows that $\hat{y}_{G'}(p)=\hat{y}_G(p)^3=1$ and $SP(G';p)=SP(G;p)^3\neq 0$.
This finishes the proof.
\end{proof}

The next proposition immediately implies the first equality of Theorem~\ref{thm:main equal}.
\begin{proposition}\label{prop:zeros vs active}
Let $G_0$ be a two-terminal graph.
Then \begin{equation}
\overline{\mathcal{Z}_{G_0}}=\overline{\mathcal{A}_{G_0}}.    \end{equation}
\end{proposition}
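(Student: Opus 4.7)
The plan splits into two inclusions, with $\overline{\Z_{G_0}}\subseteq\overline{\A_{G_0}}$ being relatively easy and $\overline{\A_{G_0}}\subseteq\overline{\Z_{G_0}}$ carrying the content. For the first inclusion, fix $p^{*}\in\Z_{G_0}$. Lemma~\ref{lem:3point} yields $G'\in\Hs_{G_0}$ with $\hat{y}_{G'}(p^{*})=1$ and $S(G';p^{*})\neq 0$, so $\hat{y}_{G'}$ is a non-constant meromorphic function near $p^{*}$. By the open mapping theorem, its image on any neighborhood $U$ of $p^{*}$ contains an open neighborhood of $1$, which necessarily contains complex numbers $z$ with $|z|>1$ and $z\notin\R$. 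A preimage $p\in U\setminus\mathcal{E}(G_0)$ of such a $z$ then lies in $\A_{G_0}$, giving $p^{*}\in\overline{\A_{G_0}}$.

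For the reverse inclusion, fix $p_0\in\A_{G_0}$ witnessed by some $G^{*}\in\Hs_{G_0}$ with $w:=\hat{y}_{G^{*}}(p_0)$ satisfying $|w|>1$ and $w\notin\R$, and set $\phi:=\hat{y}_{G^{*}}$. Since $\A_{G_0}$ is open and the zeros of $\phi'$ form a discrete set, we may assume after a small perturbation of $p_0$ that $\phi'(p_0)\neq 0$ (by density, proving $p_0\in\overline{\Z_{G_0}}$ for all such $p_0$ suffices to conclude the general case). The plan is to show that the family $\mathcal{F}=\{\hat{y}_G:G\in\Hs_{G_0}\}$ is not normal at $p_0$; once established, Theorem~\ref{thm:montel} applied with forbidden values $\{1,\omega,\omega^{2}\}$ for $\omega=e^{2\pi i/3}$ produces, in every neighborhood of $p_0$, some $p$ and $G\in\Hs_{G_0}$ with $\hat{y}_G(p)\in\{1,\omega,\omega^{2}\}$. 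Then Lemma~\ref{lem:3point} yields $R(\tilde{G};p)=0$ for either $\tilde{G}=G$ (if $\hat{y}_G(p)=1$) or $\tilde{G}=G^{\parallel 3}$ (if $\hat{y}_G(p)\in\{\omega,\omega^{2}\}$), so $p\in\Z_{G_0}$ and $p_0\in\overline{\Z_{G_0}}$.

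The non-normality is exhibited through the subfamily $g_n:=\hat{y}_{((G^{*})^{\parallel n})^{\series k_n}}\in\mathcal{F}$ with $k_n:=\lceil |w|^{n}\rceil$, which by Lemmas~\ref{lem:ei-comp} and~\ref{lem:multi-series} satisfies
\[
g_n(p)=\frac{1}{k_n}\phi(p)^{n}+\frac{k_n-1}{k_n}=1+\frac{\phi(p)^{n}-1}{k_n}.
\]
Because $\phi'(p_0)\neq 0$ and $\phi(p_0)=w\neq 0$, the real function $|\phi|^{2}$ is non-stationary at $p_0$, so every neighborhood of $p_0$ contains both points with $|\phi(p)|>|w|$ and points with $|\phi(p)|<|w|$. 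On the former, $|\phi(p)^{n}/k_n|\to\infty$, giving $g_n(p)\to\infty$; on the latter, $|\phi(p)^{n}/k_n|\to 0$, giving $g_n(p)\to 1$. A spherically locally uniformly convergent subsequence of $g_n$ would then have a limit equal to $1$ on a nonempty open set and $\infty$ on another, which no meromorphic function (nor the constant $\infty$) can achieve; hence $\mathcal{F}$ is not normal at $p_0$. The main obstacle is precisely the design of this non-normal subfamily: the naive parallel iterates $\phi^{n}$ are normal, converging to $\infty$ in the spherical metric uniformly on any neighborhood where $|\phi|>1$, so one must attach a series composition with exponent $k_n\sim|w|^{n}$ to trap $g_n(p_0)=1+(w^{n}-1)/k_n$ in a bounded region; the crossing of $|\phi|$ through $|w|$ at $p_0$ then converts the rapid modulus growth of $\phi^{n}$ into genuinely incompatible limit behaviour on the two sides.
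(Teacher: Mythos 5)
Your argument is correct and follows the paper's strategy closely: the inclusion $\overline{\Z_{G_0}}\subseteq\overline{\A_{G_0}}$ via Lemma~\ref{lem:3point} and the open mapping theorem, and the reverse inclusion by assuming a zero-free neighbourhood, invoking Montel--Carath\'eodory with forbidden values $\{1,\omega,\omega^2\}$, and exhibiting a sequence in $\mathcal{F}$ converging pointwise to $1$ and $\infty$ on two disjoint open sets. Where you diverge is in the design of that sequence: the paper first picks $n$ so that $\arg(\hat{y}_{G^{\parallel n}}(U))$ wraps around the full circle, then chooses $p_0,p_1\in U$ with $\hat{y}_{G^{\parallel n}}$ on either side of $\Re(z)=1$, applies one fixed series composition (with exponent $m$) to land these values inside and outside $\overline{\mathbb{D}}$, and finally takes parallel powers indexed by $k$; you instead couple the series exponent $k_n\sim|w|^n$ to the parallel exponent $n$ and exploit the transversal crossing of the level set $\{|\phi|=|w|\}$ at $p_0$, which forces $g_n\to 1$ on one side and $g_n\to\infty$ on the other. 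Your version is somewhat tidier (two levels of composition rather than three, and no argument-covering step), but it needs $\phi'(p_0)\neq 0$, which you handle by a density-of-perturbations argument inside the open set $\A_{G_0}$ --- a step the paper's construction avoids entirely. Both routes are sound. One justification you should make explicit in a polished write-up is that $|\phi|^2$ is non-stationary at $p_0$ precisely because \emph{both} $\phi'(p_0)\neq 0$ \emph{and} $\phi(p_0)=w\neq 0$ (the gradient of $|\phi|^2$ is proportional to $\overline{w}\,\phi'(p_0)$), so the level set $\{|\phi|=|w|\}$ is a smooth curve through $p_0$ and both open regions $\{|\phi|>|w|\}$ and $\{|\phi|<|w|\}$ meet every neighbourhood of $p_0$, which is exactly what the uniform-limit contradiction requires.
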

\begin{proof}
    Let $p\in \mathcal{Z}_{G_0}$. Then by Lemma~\ref{lem:3point} there exists $G\in\Hs_{G_0}$ such that 
    $\hat{y}_G(p)=\frac{R(G;p)}{\SP(G;p)}+1=1$ and $\SP(G;p)\neq 0$.
    We denote $f(p)$ for the map $p\mapsto \hat{y}_G(p)$, with $G$ fixed.
    Since $\SP(G;p)$ and $R(G;p)$ are distinct polynomials in $p$, we can find $\delta>0$ such that $f|_{B(p,\delta)}$ is holomorphic and not constant and so by the open mapping theorem, it is an open map. 
    Let $\epsilon>0$ such that $\epsilon<\delta$. 
    Then $f(B(p,\epsilon))$ is mapped to an open set of $\C$ containing $f(p)=1$, which has to contain some disc $B(1,\rho)$; particularly, it contains the point $1+i\tfrac{\rho}{2}$, which is greater than one in absolute value and not real.
    
    We have shown that for each $\epsilon>0$ sufficiently small ($\epsilon<\delta$) we can find $q_{\epsilon}\in B(p,\epsilon)$ such that $1<|{f(q_{\epsilon})}|<\infty$ and such that $f(q_{\epsilon})\notin \mathbb{R}$. We can thus build a sequence $(p_n)$ with $p_n:=q_{1/n}$ for $n\in\mathbb{Z}_{>0}$, which converges to $p$ such that $p_n\in\mathcal{A}_{{G_0}}$ for $n$ large enough, implying that $p\in \overline{ \mathcal{A}_{G_0}}$. This shows that $\overline{\mathcal{Z}_{G_0}}\subseteq \overline{\mathcal{A}_{G_0}}$.

We next focus on the reverse inclusion.
    Let $p\in \mathcal{A}_{{G_0}}$, and let $G\in\Hs_{G_0}$ such that $1<|{\hat{y}_G(p)}|<\infty$ and such that $\hat{y}_G(p)\notin \mathbb{R}$.  
    We intend to prove that for all open neighbourhoods $U$ of $p$ we have $U\cap\mathcal{Z}_{G_0}\neq\emptyset$, that is $p\in\overline{\mathcal{Z}_{G_0}}$.
Assume to the contrary that $U\cap\mathcal{Z}_{G_0}=\emptyset$ for some open set $U$ containing $p$, which we may assume to be disjoint from $\mathcal{E}(G_0)$.
By Lemma~\ref{lem:3point} we have that for all $p'\in U$ $\hat{y}_G(p')\notin\{\omega,\omega^2,1\}$ for all $G\in \Hs_{G_0}$.
By the Montel-Carathèodory theorem (Theorem~\ref{thm:montel}) the family $\mathcal{F}\!=\!\{p\mapsto \hat{y}_G(p)\}_{G\in\Hs_{G_0}}$ of functions on $U$ is normal. 
We will show that this cannot be the case by exhibiting a sequence $(f_n)$ of elements in $\mathcal{F}$ which converges to a discontinuous function (and so cannot have a subsequence which converges to a meromorphic function on compact subsets of $U$ and hence cannot be normal).
This contradiction then shows that $U\cap\mathcal{Z}_{G_0}\neq \emptyset$ for each open set $U$ containing $p$. 

We will now construct the desired sequence.
Denote by $H_-$ the open half-plane $\{z\in\C\mid\Re(z)<1\}$ and by $H_+$ the open half-plane $\{z\in\C\mid\Re(z)>1\}$.
Let $f$ be the rational function defined by $p\mapsto \hat{y}_G(p).$
Let $U$ be a small enough neighbourhood of $p$ such that $f(U)\cap \overline{\mathbb{D}}=\emptyset$. 
This exists since $|f(p)|>1$.
Since the map $f(p)$ is not constant it follows that $f(U)$ is an open set of $\widehat{\C}$. In particular, $\arg(f(U))$ contains an interval of non-zero Lebesgue measure. 

Now, consider the sequence $G_n=G^{\parallel_n}$. By Lemma~\ref{lem:ei-comp} we have that $\hat{y}_{G_n}(U)=(\hat{y}_G(U))^n$ (pointwise powers), so $\arg(\hat{y}_{G_n}(U))=n\arg(\hat{y}_G(U))$. We can then choose a big enough $n\in\N$ such that $\arg(\hat{y}_{G_n}(U))$ covers the entire circle.

Consider two points $p_0,p_1\in U$ such that $\hat{y}_{G_n}(p_0)\in H_-$ and $\hat{y}_{G_n}(p_1)\in H_+$ (which exist, since $\arg(\hat{y}_{G_n}(U))$ covers the entire circle and $|(\hat{y}_{G_n}(U))|>1$ everywhere). By Lemma~\ref{lem:multi-series} we have that $\hat{y}_{G^{\series_m}}(p)=\frac{1}{m}\hat{y}_G(p)+\frac{m-1}{m}$. We can interpret this result as a convex combination of $\hat{y}_G(p)$ and $1$, which approaches $1$ as $m$ grows. Then for big enough $m$ we have that 
\begin{align*}
    \hat{y}_{(G^{\parallel_n})^{\series_m}}(p_0)=\frac{\hat{y}_G(p_0)^n}{m}+\frac{m-1}{m}\in \mathbb{D}\\
    \hat{y}_{(G^{\parallel_n})^{\series_m}}(p_1)=\frac{\hat{y}_G(p_1)^n}{m}+\frac{m-1}{m}\in \C\setminus\overline{\mathbb{D}}
\end{align*}
Then the sequence of functions $g_k:U\to\Cext$ defined as 
$$g_k(p)=\hat{y}_{\left((G^{\parallel_n})^{\series_m}\right)^{\parallel_k}}(p)=\left(\frac{\hat{y}_G(p)^n}{m}+\frac{m-1}{m}\right)^k$$
is a sequence of elements of $\mathcal{F}$ that converges to a non-continuous function on $U$, as desired.
\end{proof}

\subsection{Relating the activity-locus and the the density locus}\label{sec:active vs dense}
Let $\varepsilon>0$: we call a set $S\subseteq\C$ \emph{$\varepsilon$-dense} if $\forall z\in\C\ \exists z'\in S$ such that $|z-z'|\!<\!\varepsilon$. 
The following lemma is~\cite{main-approx}*{Lemma 4.2} and will be useful for us.
\begin{lemma}\label{lem:4.2}
    Let $\varepsilon > 0$ and let $a, b, c \in B(0, \varepsilon)$ such that the convex cone spanned by $a, b, c$ is $\C$. Then the set $a\N + b\N + c\N$ is $\varepsilon$-dense in $\C$.
\end{lemma}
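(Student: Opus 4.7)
I would combine a rounding argument with the structure of the additive group generated by $a, b, c$. The positive-spanning hypothesis is equivalent to the existence of positive reals $\lambda_1, \lambda_2, \lambda_3 > 0$ with $\lambda_1 a + \lambda_2 b + \lambda_3 c = 0$, since $0$ must lie in the interior of the convex hull of $\{a,b,c\}$. This relation is the crucial tool: given any representation $w = n_1 a + n_2 b + n_3 c$ with $n_i \in \mathbb{Z}$, adding $(\lfloor t\lambda_1\rfloor, \lfloor t\lambda_2\rfloor, \lfloor t\lambda_3\rfloor)$ for sufficiently large $t$ moves all coordinates into $\mathbb{N}$, changing the sum only by a controlled perturbation from rounding the $t\lambda_i$. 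Thus, up to small perturbations, the semigroup $a\mathbb{N} + b\mathbb{N} + c\mathbb{N}$ behaves like the full subgroup $\Lambda := a\mathbb{Z} + b\mathbb{Z} + c\mathbb{Z}$.

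It therefore suffices to show that $\Lambda$ is $\varepsilon$-dense in $\mathbb{C}$ with some room to spare, so that the perturbation from the shift stays within the margin. If $a, b, c$ do not satisfy any $\mathbb{Q}$-linear relation beyond the $\mathbb{R}$-relation given by the $\lambda_i$, then $\Lambda$ is already dense in $\mathbb{C}$ and the claim is automatic. Otherwise $\Lambda$ is a rank-$2$ lattice in $\mathbb{C}$, and I would bound its covering radius by choosing two of the vectors (or small integer combinations of them, leveraging the third to improve) as generators: the combination of $|a|,|b|,|c| < \varepsilon$ with the positive-spanning hypothesis prevents the lattice from being too stretched in any direction and forces its Voronoi cell to have circumradius strictly less than $\varepsilon$.

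The main obstacle is controlling the shift error in the case where the $\lambda_i$ are irrational, simultaneously with the covering-radius bound, in order to obtain the sharp constant $\varepsilon$ rather than a constant multiple of it. A naive rounding of three coordinates only gives error up to $|a|+|b|+|c| < 3\varepsilon$, so both steps must be quantitatively tight. The fix would be to choose $t$ via a Weyl/Kronecker equidistribution argument so that all three fractional parts $\{t\lambda_i\}$ are arbitrarily small. The most delicate subcase is when the $\lambda_i$ satisfy a nontrivial rational relation, so that the orbit $\{t(\lambda_1,\lambda_2,\lambda_3) \bmod 1\}$ lies in a proper subtorus of $\mathbb{T}^3$; here one needs a direct arithmetic argument specific to the degenerate geometry, or alternatively a perturbation-and-limit argument reducing to the generic case.
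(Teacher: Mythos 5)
Your key input is correct — the positive‑spanning hypothesis does produce $\kappa_1,\kappa_2,\kappa_3>0$ with $\kappa_1 a+\kappa_2 b+\kappa_3 c=0$, and this free relation is the engine of any proof — but the argument as written is not a proof: you explicitly leave open the subcase in which $(\kappa_1,\kappa_2,\kappa_3)$ satisfies a nontrivial rational relation, so that the orbit $t(\kappa_1,\kappa_2,\kappa_3)\bmod 1$ lies in a proper subtorus of $\mathbb{T}^3$, and you promise but do not supply the ``direct arithmetic argument'' or ``perturbation‑and‑limit argument'' for it. That is a genuine gap. It is also a needless one: the closure of the image of any one‑parameter subgroup $t\mapsto t\kappa \bmod 1$ of $\mathbb{T}^3$ is a subtorus containing $0$, so for every $\delta>0$ there are arbitrarily large $t$ with all three fractional parts of $t\kappa_i$ below $\delta$, irrespective of rational relations among the $\kappa_i$; and in the extreme case where $\kappa_1:\kappa_2:\kappa_3$ is rational, the shift can be made \emph{exact} by taking $t$ a multiple of the common denominator, so again there is no obstruction. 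Neither observation appears in your write‑up, and your own text flags the spot as unresolved.

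More structurally, the detour through the full group $\Lambda=a\mathbb{Z}+b\mathbb{Z}+c\mathbb{Z}$ and the dense‑versus‑lattice dichotomy is precisely what forces you to trade the covering radius against the $\mathbb{Z}\to\mathbb{N}$ rounding error and to worry about the sharp constant. One can avoid ever leaving the nonnegative orthant: since the cone is all of $\mathbb{C}$, any target $z$ admits $z=\lambda_1 a+\lambda_2 b+\lambda_3 c$ with $\lambda_i\ge 0$; the replacement $\lambda_i\mapsto\lambda_i+t\kappa_i$ preserves $z$, keeps all coefficients nonnegative for $t\ge 0$, and sends them to $+\infty$. Choose $t$ so that $\lambda_1+t\kappa_1$ is an integer $n_1$ and $\lambda_2+t\kappa_2,\ \lambda_3+t\kappa_3\ge 1$, then round the latter two to nearest integers $n_2,n_3\in\mathbb{N}$; this yields $|z-(n_1a+n_2b+n_3c)|\le\tfrac12|b|+\tfrac12|c|<\varepsilon$. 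The ``naive error $<|a|+|b|+|c|<3\varepsilon$'' you worried about disappears the moment you use the sliding parameter to make \emph{one} of the three fractional parts exactly zero before rounding the other two — no equidistribution, no lattice covering‑radius estimate, and no case split are required.
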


For a two terminal graph $G_0$ and $p\in \mathbb{C}$ define the M\"obius transformation
\begin{equation}\label{eq:define mobius}
g(z)=f_p(f_p(z)f_p(\YGz))=\frac{z\YGz-p}{\YGz+z-1-p}.
\end{equation}
Note that by Lemma~\ref{lem:ei-comp} and properties of $f_p$ that $g(y_G)$ is equal to $y_{G\series G_0}$.
Clearly, $g$ has at most two fixed points. The next lemma classifies these in case $p\in \mathcal{A}_{G_0}\cup\mathcal{A}^{\R}_{G_0}$.
\begin{lemma}\label{lem:mobius}
Let $G_0$ be a two-terminal graph and suppose that $p\in \mathcal{A}_{G_0}\cup\mathcal{A}^{\R}_{G_0}$.
Let $g$ be the M\"obius transformation as defined in~\eqref{eq:define mobius}.
Then $g(1)=1$ and $g(p)=p$. Moreover, $1$ is an attracting fixed point of $g$, while $p$ is a repelling fixed point of $g$.
\end{lemma}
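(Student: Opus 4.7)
The plan is to verify both fixed-point identities by substitution and then to compute the multipliers $g'(1)$ and $g'(p)$ directly, reading off attraction/repulsion from $|\hat y_{G_0}(p)|$.

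First I would check the fixed points. Substituting $z=1$ into the explicit formula for $g$ gives $g(1)=(y_{G_0}(p)-p)/(y_{G_0}(p)-p)=1$, and $z=p$ gives $g(p)=p(y_{G_0}(p)-1)/(y_{G_0}(p)-1)=p$. Both are well defined: the hypothesis $p\in \mathcal{A}_{G_0}\cup \mathcal{A}^{\mathbb{R}}_{G_0}$ excludes $p\in \mathcal{E}(G_0)$, which rules out $y_{G_0}(p)=p$, and it forces $\hat y_{G_0}(p)$ to be finite and of modulus at least $1$, which rules out $y_{G_0}(p)=1$.

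Next I would differentiate using the standard Möbius rule. Writing $g(z)=(az+b)/(cz+d)$ with $a=y_{G_0}(p)$, $b=-p$, $c=1$, $d=y_{G_0}(p)-1-p$, a short expansion and factoring shows $ad-bc=(y_{G_0}(p)-1)(y_{G_0}(p)-p)$, so that
\[
g'(1)=\frac{y_{G_0}(p)-1}{y_{G_0}(p)-p},\qquad g'(p)=\frac{y_{G_0}(p)-p}{y_{G_0}(p)-1}.
\]
Since $f_p(w)=(w-p)/(w-1)$, the identity $\hat y_{G_0}(p)=f_p(y_{G_0}(p))$ from~\eqref{eq:virtual is mobius effective} turns these into $g'(p)=\hat y_{G_0}(p)$ and $g'(1)=1/\hat y_{G_0}(p)$, consistent with the general fact that the two fixed-point multipliers of a Möbius transformation are reciprocals of each other.

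The last step is to conclude $|\hat y_{G_0}(p)|>1$, whence $|g'(1)|<1<|g'(p)|$, so $1$ is attracting and $p$ is repelling. I expect the main (and only) subtlety to be interpretive: the definition of $\mathcal{A}_{G_0}$ only guarantees the activity condition for \emph{some} $G\in \mathcal{H}_{G_0}$, not for $G_0$ itself. The natural fix is to replace $G_0$ by such a $G$; this is harmless because $\mathcal{H}_G\subseteq \mathcal{H}_{G_0}$, so any conclusion about zeros or density proved inside $\mathcal{H}_G$ transfers to $\mathcal{H}_{G_0}$. Under this reduction the hypothesis reads $|\hat y_{G_0}(p)|>1$ directly (and additionally $\hat y_{G_0}(p)<-1$ in the real case), and the lemma follows.
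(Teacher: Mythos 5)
Your argument reproduces the paper's proof of this lemma: check the two fixed points by substitution, compute $g'$ (the determinant factorization $ad-bc=(y_{G_0}(p)-1)(y_{G_0}(p)-p)$ is exactly the point), obtain $g'(1)=\tfrac{y_{G_0}(p)-1}{y_{G_0}(p)-p}=1/\hat y_{G_0}(p)$ and $g'(p)=\hat y_{G_0}(p)$, and read off attraction/repulsion from $|\hat y_{G_0}(p)|>1$. One small ordering issue: you claim ``it forces $\hat y_{G_0}(p)$ to be finite and of modulus at least $1$'' as a consequence of the hypothesis \emph{before} introducing the reduction that actually justifies it, so a careful reader would want the sentence about replacing $G_0$ by a witness moved up.

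That said, the subtlety you flag is real and the paper's own proof glosses over it: membership in $\mathcal{A}_{G_0}$ only guarantees $1<|\hat y_G(p)|<\infty$ for \emph{some} $G\in\mathcal{H}_{G_0}$, not for $G_0$ itself, yet the paper concludes ``by assumption that $|g'(1)|<1$'' as though $G_0$ were the witness. Your remedy — re-anchor at the witnessing graph $G$, which is legitimate because $p\notin\mathcal{E}(G)$ follows from $\hat y_G(p)$ being finite and of modulus $>1$, and because $\mathcal{H}_G\subseteq\mathcal{H}_{G_0}$ (the latter being closed under series, parallel and transposition and containing $G$) — is exactly how the lemma is actually used downstream in Proposition \ref{prop:active vs density} and Lemma \ref{lem:precomputation}, where the sequence $G_{n+1}=G_n\parallel G_0$ can just as well be started from the witness.
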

\begin{proof}
It follows from a direct calculation that $1$ and $p$ are fixed points of $g$.
The derivative of $g$, $g^\prime(z)$, satisfies
\begin{equation*}
g^\prime(z)=\frac{(\YGz-p)(\YGz-1)}{(\YGz+z-1-p)^2}.    
\end{equation*}
Hence $g'(1)=\frac{\YGz-1}{\YGz-p}$ and $g'(p)=\frac{\YGz-p}{\YGz-1}$. 
Since $\frac{\YGz-1}{\YGz-p}=\frac{1}{\hat{y}_{G_0}(p)}$, we obtain by assumption that $|g'(1)|<1$ and $|g'(p)|>1$. 
In other words that $1$ is an attracting fixed point of $g$ and $p$ is a repelling fixed point of $g$.
\end{proof}

The second part of Theorem~\ref{thm:main equal} follows directly from the next proposition.

\begin{proposition}\label{prop:active vs density}
Let $G_0$ be a two-terminal graph. 
Then 
\begin{align*}
\mathcal{D}_{G_0}\subseteq  \mathcal{A}_{G_0} \quad \text{and}\quad  \mathcal{A}_{G_0}\setminus \{p\mid R(G_0;p)=0\}\subseteq \mathcal{D}_{G_0}, 
\\
\mathcal{D}^{\R}_{G_0}\subseteq  \mathcal{A}^{\R}_{G_0} \quad \text{and}\quad  \mathcal{A}^{\R}_{G_0}\setminus \{p\mid R(G_0;p)=0\}\subseteq \mathcal{D}^\R_{G_0}. 
\end{align*}
\end{proposition}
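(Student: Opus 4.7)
The plan is to prove the two inclusions separately in the complex case and then explain the real analogues, using the identity $\hat{y}_G=f_p(y_G)$ from~\eqref{eq:virtual is mobius effective} to shuttle between the $y$-picture (in which series composition is additive on $y-1$ by Lemma~\ref{lem:ei-comp}) and the $\hat{y}$-picture (in which parallel composition is multiplicative).

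For the easy inclusion $\D_{G_0}\subseteq \A_{G_0}$, I would observe that $f_p$ is a homeomorphism of $\Cext$, so the preimage of the open set $U=\{w\in\C : |w|>1,\ w\notin\R\}$ under $f_p$ is a non-empty open subset of $\C\setminus\{1\}$. Density of $\{y_G(p) : G\in \Hs_{G_0},\ R(G;p)\neq 0\}$ in $\C$ then produces some $G$ with $y_G(p)$ in this preimage; the corresponding $\hat{y}_G(p)\in U$ is finite (as $y_G(p)\neq 1$, i.e.\ $S(G;p)\neq 0$) and witnesses $p\in \A_{G_0}$. The real inclusion $\D^{\R}_{G_0}\subseteq \A^{\R}_{G_0}$ is identical with $U$ replaced by the open interval $(-\infty,-1)\subseteq\R$ and using that $f_p$ restricts to a homeomorphism of $\R\cup\{\infty\}$ when $p\in\R$.

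For the harder inclusion $\A_{G_0}\setminus\{p:R(G_0;p)=0\}\subseteq \D_{G_0}$, fix $G^*\in \Hs_{G_0}$ with $\alpha:=\hat{y}_{G^*}(p)$ satisfying $|\alpha|>1$ and $\alpha\notin\R$. The strategy has three steps. First, by Lemma~\ref{lem:ei-comp} the parallel powers satisfy $\hat{y}_{(G^*)^{\parallel_n}}(p)=\alpha^n$; the moduli $|\alpha|^n$ tend to infinity, and the arguments $n\arg(\alpha)\bmod 2\pi$ are either equidistributed on $[0,2\pi)$ (when $\arg(\alpha)/2\pi$ is irrational) or cycle through a regular $q$-gon (when $\arg(\alpha)/2\pi=r/q$ in lowest terms). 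Crucially, $\alpha\notin\R$ forces $q\geq 3$. Second, set
\[
z_n:=y_{(G^*)^{\parallel_n}}(p)-1=\frac{1-p}{\alpha^n-1};
\]
for large $n$, $|z_n|\approx |1-p|/|\alpha|^n$ is small and $\arg(z_n)\approx \arg(1-p)-\arg(\alpha^n)$, so the available directions of $z_n$ are the $q$-gon (or dense circle) rotated by $\arg(1-p)$. Select exponents $n_1,\dots,n_k$ (with $k\in\{3,4\}$) so that every $|z_{n_i}|$ is smaller than a prescribed $\varepsilon$ and the directions of the $z_{n_i}$ span $\C$ as a convex cone. Third, apply Lemma~\ref{lem:4.2} (or its straightforward extension to more than three vectors) to conclude that $\{\sum_i k_i z_{n_i}: k_i\in \N\}$ is $\varepsilon$-dense in $\C$. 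By Lemma~\ref{lem:ei-comp} each such sum equals $y_H(p)-1$ for $H$ a series composition of $k_i$ copies of $(G^*)^{\parallel_{n_i}}$, and each such $H$ lies in $\Hs_{G_0}$ with $R(H;p)\neq 0$, because $R((G^*)^{\parallel_{n_i}};p)\neq 0$ (otherwise $\hat{y}_{(G^*)^{\parallel_{n_i}}}=1$, contradicting $|\alpha|>1$) and $R$ multiplies under series. Letting $\varepsilon\to 0$ gives density, i.e.\ $p\in \D_{G_0}$.

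The main obstacle is the rational-argument subcase, where only finitely many directions of $\alpha^n$ are available: the lower bound $q\geq 3$ forced by $\alpha\notin\R$ is essential, and the marginal case $q=4$ (where any three of the four cardinal directions sit in a closed half-plane) requires invoking four vectors rather than three. The real inclusion $\A^{\R}_{G_0}\setminus\{p:R(G_0;p)=0\}\subseteq \D^{\R}_{G_0}$ sidesteps this difficulty entirely: since $\alpha<-1$, the values $\alpha^n$ alternate in sign and grow in modulus, so two small values $z_n$ of opposite sign generate an $\varepsilon$-dense subset of $\R$ via non-negative integer combinations (a one-dimensional analogue of Lemma~\ref{lem:4.2} that follows from elementary remainder arguments).
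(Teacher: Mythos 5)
Your proof is correct and follows the same overall strategy as the paper (use parallel powers of a witness to manufacture small effective edge interactions $y-1$ whose directions span $\C$, then invoke Lemma~\ref{lem:4.2}), but the presentation of the hard inclusion differs in two genuine ways. First, you work directly with the witness $G^*$, writing $\hat{y}_{(G^*)^{\parallel n}}(p)=\alpha^n$ and $z_n=y_{(G^*)^{\parallel n}}(p)-1=\tfrac{1-p}{\alpha^n-1}$, and read off the asymptotic arguments from that explicit formula. The paper instead iterates the M\"obius transformation $g$ from~\eqref{eq:define mobius} on the sequence $G_{n+1}=G_n\parallel G_0$, shows via Lemma~\ref{lem:mobius} that $1$ is an attracting and $p$ a repelling fixed point, and extracts the asymptotic angle increment as $\arg(g'(1))=\arg(1/\hat{y}_{G_0}(p))$. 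These are the same computation in two coordinate systems, but the M\"obius framing is reused later (Lemma~\ref{lem:precomputation}, Theorem~\ref{thm:constructing}), so the paper gets extra mileage from it; your power-law description is cleaner in isolation, and it also sidesteps the implicit reduction the paper makes to $G_0$ being the witness (needed for Lemma~\ref{lem:mobius} as stated to even apply). Second, you flag the rational case $q=4$ as needing a fourth vector and an extension of Lemma~\ref{lem:4.2}. The paper uses only three vectors and simply asserts three indices exist whose directions span $\C$; this is actually still true in the $q=4$ case because the correction $\arg(\alpha^n-1)-n\arg(\alpha)=\arg(1-\alpha^{-n})$, while tending to $0$, is nonzero at every finite $n$ and pushes a well-chosen triple off the exact square and onto a spanning configuration. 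So your four-vector fix is not strictly necessary, though it is a perfectly valid and arguably more robust way to close that subcase; if you keep it, you should at least note that the four-vector version of Lemma~\ref{lem:4.2} is not literally what is stated and sketch why it extends. The easy inclusions and the real case are essentially identical to the paper's.
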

\begin{proof}
Let $p\in \mathcal{A}_{G_0}\cup\mathcal{A}^{\R}_{G_0} \setminus \{p\mid R(G_0;p)=0\}$.
In what follows we omit the argument $p$ to the (virtual) edge interactions for readability.
We first consider part of the argument that is the same for the real and non-real case after which we distinguish between these two cases.

Consider the sequence of graphs starting from $G_0$ where $G_{n+1}=G_n\parallel G_0$. 
Note that a simple induction argument shows that the associated effective edge interactions $g_n=y_{G_n}$ are given by $g(g_{n-1})$, where $g$ is the M\"obius transformation defined in~\eqref{eq:define mobius}.
By Lemma~\ref{lem:mobius}, the sequence $(g_n)$ either converges to the attracting fixed point $1$ or is constantly equal to the repelling fixed point $p$. 
Since $g_0=\YGz=p$ implies $\hat{y}_{G_0}(p)=f_p(\YGz)=0$, while $|\hat{y}_{G_0}(p)|>1$, so the latter is impossible. 

We next claim that 
\begin{equation}\label{eq:claim R=0}
\text{if $R(G_n;p)=0$ for some $p$, then $S(G_n;p)\neq 0$ and hence $g_n=\infty$.}    
\end{equation} 
In particular this can only happen for a single value of $n$ since the sequence $(g_n)$ converges to $1$.
To prove~\eqref{eq:claim R=0}, suppose $R(G_n;p)=0$ and $S(G_n;p)=0$. 
By Lemma~\ref{lem:recursion} and a simple induction argument we have $S(G_0;p)=0$ and hence $0=R(G_n;p)=R(G_0;p)^{n+1}$, contradicting $S(G_0;p)\neq -R(G_0;p)$.

Since the sequence $(y_{G_n})$ converges to $1$, for all $\epsilon>0$ there exists an index $m_{\epsilon}$ such that $\forall n\geq m_{\epsilon}:\ |y_{G_n}-1|<\epsilon$. 

If $p\in \mathbb{R}$ we have $\hat{y}_{G_0}(p)<-1$ and hence $y_{G_n}-1$ alternates in sign, because $g'(1)=1/\hat{y}_{G_0}(p)<0$. 
Therefore there exists $n\geq m_{\epsilon}$ such that $R(G_n;p)R(G_{n+1};p)\neq 0$, $\hat{y}_{G_n}(p)-1\in (-\varepsilon,0)$ and $\hat{y}_{G_n+1}(p)-1\in (0,\varepsilon)$.
This implies that $\mathbb{N}(y_{G_n}-1)+\mathbb{N}(y_{G_{n+1}}-1)$ is $\varepsilon$-dense in $\mathbb{R}$ and by Lemma~\ref{lem:ei-comp} consists of shifted effective edge interactions of series compositions of $G_n$ and $G_{n+1}$. 
By Lemma~\ref{lem:recursion} $R(G_n^{\series k}\series G_{n+1}^{\series \ell};p)\neq 0$ for all $k,\ell$.
It follows that $\{y_{G}\mid G\in \mathcal{H}_{G_0} ,R(G;p)\neq 0\}$ is $\varepsilon$-dense in $\mathbb{R}$. 
As this holds for all $\varepsilon>0$, it follows that $p\in \mathcal{D}^{\R}_{G_0}$ in case $p$ is real, thus proving the inclusion $\mathcal{A}^{\R}_{G_0}\setminus \{p\mid R(G_0;p)=0\}\subseteq \mathcal{D}^{\R}_{G_0}$.

In case $p\notin \mathbb{R}$ the argument is slightly more involved.
In this case we claim that the values $y_{G_n}-1$ are not contained in a half-plane. 
Indeed, observe that
\[
\arg({y}_{G_{n+1}}-1)-\arg({y}_{G_n}-1)=\arg\left(\frac{y_{G_{n+1}}-1}{y_{G_n}-1}\right)=\arg\left(\frac{g(g_n)-g(1)}{g_n-1}\right).
\]
It follows that
\[
\arg({y}_{G_{n+1}}-1)-\arg({y}_{G_n}-1)\xrightarrow{n\to\infty}\arg(g'(1))=\arg\left(\frac{1}{\hat{y}_{G_0}(p)}\right).
\]
Since $\hat{y}_{G_0}(p)$ is non-real by hypothesis, the consecutive difference between the arguments of the sequence ${y}_{G_n}-1$ converges to $\arg(1/\hat{y}_{G_0}(p))\neq 0\mod 2\pi$. 
Therefore for each $\varepsilon>0$ there exist three indices $n_a,n_b,n_c\geq m_\varepsilon$ such that ${y}_{G_{n_a}}-1,{y}_{G_{n_b}}-1,{y}_{G_{n_c}}-1$ satisfy the hypothesis of Lemma~\ref{lem:4.2}. We may assume that none of these interactions are equal to $\infty$, and thus $R(G_{n_i};p)\neq 0$ for each $i\in \{a,b,c\}$ by the previous claim~\eqref{eq:claim R=0}.
Thus by Lemma~\ref{lem:4.2} the set ${H}_\varepsilon=({y}_{G_{n_a}}-1)\N+({y}_{G_{n_b}}-1)\N+({y}_{G_{n_c}}-1)\N$ is $\varepsilon$-dense in $\C$.
Since ${y}_{G_1\series G_2}-1={y}_{G_1}-1+{y}_{G_2}-1$ by Lemma~\ref{lem:ei-comp}, the set $H_\varepsilon$ consists of (shifted) effective edge interactions of series compositions of the graphs $G_{n_a},G_{n_b},G_{n_c}$, which in turn are parallel compositions of $G_0$. 
By Lemma~\ref{lem:recursion} we have that $R(G;p)\neq 0$ for each of these compositions.
In particular, $ {H}_\varepsilon+1$ and therefore $\{y_{G}\mid G\in \mathcal{H}_{G_0}, R(G;p)\neq 0\}$ is $\varepsilon$-dense in $\C$. Since this holds for every $\varepsilon>0$ we obtain that $\{y_G\mid G\in \mathcal{H}_{G_0}, R(G;p)\neq 0\}$ is dense in $\C$.
It thus follows that $p\in \mathcal{D}_{G_0}$ proving the inclusion $\mathcal{A}_{G_0}\setminus \{p\mid R(G_0;p)=0\}\subseteq \mathcal{D}_{G_0}$.

The other inclusion is easier to proof.
Indeed, suppose $p\in \mathcal{D}_{G_0}$ (resp. $p\in  \mathcal{D}^{\R}_{G_0}$).
Then the set of values $\{y_G(p)\mid G\in \Hs_{G_0}, R(G;p)\neq 0\}$ is dense in $\C$ (resp. dense in $\mathbb{R})$.
Since the map $f_p$ is a M\"obius transformation, it follows that $\{\hat{y}_G(p)\mid G\in \Hs_{G_0},R(G;p)\neq 0\}=\{f_p(y_G(p))\mid G\in \Hs_{G_0},R(G;p)\neq 0 \}$ is dense in $f_p(\C)=\widehat{\C}\setminus \{1\}$  (resp. dense in $f_p(\R)=\widehat{\R}\setminus \{1\}$).
Therefore there exists $G\in \Hs_{G_0}$ such that $1<|\hat{y}_G(p)|<\infty$ and such that $\hat{y}_G(p)\notin \mathbb{R}$ (resp. such that $-\infty<\hat{y}_G(p)<-1$).
This proves the other inclusion for both the real and non-real case.
\end{proof}

\section{Two propositions on the activity-locus}\label{sec:two prop}
\subsection{Boundedness of reliability zeros versus activity near the positive real axis}
Here we provide a proof of Proposition~\ref{prop:unbounded zeros?}.
We start with some results about activity loci.
It will be convenient to define
\[
\mathcal{A}:=\bigcup_{G}\mathcal{A}_G,
\]
where the union is over all two-terminal graphs $G$. 

\begin{lemma}\label{lem:closure of all activity and zeros}
We have the equality
\[
\overline{\mathcal{A}}=\overline{\mathcal{Z}}.
\]
\end{lemma}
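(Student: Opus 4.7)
The plan is to prove the two inclusions separately. The direction $\overline{\mathcal{A}} \subseteq \overline{\mathcal{Z}}$ follows almost immediately from Theorem~\ref{thm:main equal}. For any connected two-terminal graph $G_0$, every graph in $\mathcal{H}_{G_0}$ remains connected because both series and parallel compositions preserve connectivity from source to sink, hence $\mathcal{Z}_{G_0} \subseteq \mathcal{Z}$. Combining this with $\overline{\mathcal{A}_{G_0}} = \overline{\mathcal{Z}_{G_0}}$ gives $\mathcal{A}_{G_0} \subseteq \overline{\mathcal{Z}}$, and taking the union over $G_0$ followed by closure yields $\overline{\mathcal{A}} \subseteq \overline{\mathcal{Z}}$.

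For the reverse inclusion it suffices to show $\mathcal{Z} \subseteq \overline{\mathcal{A}}$. Given $p \in \mathcal{Z}$, fix a connected graph $G$ with $R(G;p) = 0$ (necessarily $|V(G)| \geq 2$). The natural idea is to view $G$ as a two-terminal graph $G_0 = (G,s,t)$ for some distinct pair $s,t$ of vertices and apply Theorem~\ref{thm:main equal} to this $G_0$: since $G \in \mathcal{H}_{G_0}$ and $R(G;p) = 0$, all that is required is $p \notin \mathcal{E}(G_0)$. Using $R(G;p) = 0$ together with the identity $\SP(G_0;p) = R(\hat{G}_{st};p) - R(G;p)$ from the introduction (where $\hat G_{st}$ denotes $G$ with $s,t$ merged), this reduces to the condition $R(\hat{G}_{st};p) \neq 0$. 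The obstacle is that $R(\hat{G}_{st};p) = 0$ might hold for \emph{every} choice of distinct $s,t$, in which case Theorem~\ref{thm:main equal} applied directly to any two-terminal structure on $G$ yields nothing.

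To circumvent this I would prove $\mathcal{Z} \subseteq \overline{\mathcal{A}}$ by strong induction on $|V(G)|$. The base case $|V(G)| = 2$ is transparent: $G$ is a bundle of $k \geq 1$ parallel edges between its two vertices $s,t$, so $R(G;p) = 1 - p^k$ and $\hat{G}_{st}$ is a single vertex carrying $k$ loops with $R(\hat{G}_{st};p) = 1 \neq 0$; hence $p \notin \mathcal{E}((G,s,t))$ and Theorem~\ref{thm:main equal} gives $p \in \overline{\mathcal{A}_{(G,s,t)}} \subseteq \overline{\mathcal{A}}$. In the inductive step with $n := |V(G)| \geq 3$, either some pair $s,t$ satisfies $R(\hat{G}_{st};p) \neq 0$ and the previous paragraph concludes, or $R(\hat{G}_{st};p) = 0$ for every pair; in the latter case, pick any such pair and observe that $\hat{G}_{st}$ is a connected multigraph on $n-1 \geq 2$ vertices with $R(\hat{G}_{st};p) = 0$, so the inductive hypothesis applied to $\hat{G}_{st}$ delivers $p \in \overline{\mathcal{A}}$. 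The strictly decreasing vertex count guarantees termination at the two-vertex base, so the obstructive case is precisely what feeds the recursion rather than blocking it.
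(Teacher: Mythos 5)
Your proof is correct and takes essentially the same route as the paper's. The paper first observes $\overline{\mathcal{A}}=\overline{\bigcup_G\mathcal{Z}_G}$ via Proposition~\ref{prop:zeros vs active} and then proves $\bigcup_G\mathcal{Z}_G=\mathcal{Z}$; for the nontrivial inclusion it takes a graph $G_0$ with $R(G_0;p)=0$ of minimum size and a pair of terminals, notes that $p\in\mathcal{E}(G_0)$ forces $S(G_0;p)=-R(G_0;p)=0$, hence $R(\hat{G}_0;p)=R(G_0;p)+S(G_0;p)=0$, contradicting minimality since $\hat{G}_0$ is smaller. Your strong induction on $|V(G)|$ with the explicit two-vertex base case is precisely that minimal-counterexample descent written out in the inductive direction, and the key identity $R(\hat{G}_{st};p)=R(G;p)+S(G;p)$ driving the descent is the same.
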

\begin{proof}
By Proposition~\ref{prop:zeros vs active} we have the following.
    \[
        \overline{\mathcal{A}}=\overline{\bigcup_{G}\mathcal{A}_G}=\overline{\bigcup_{G}\overline{\mathcal{A}_G}}=\overline{\bigcup_{G}\overline{\mathcal{Z}_G}}=\overline{\bigcup_{G}\mathcal{Z}_G}.
    \]
    It thus suffices to show that $\cup_G \mathcal{Z}_G=\mathcal{Z}$. To prove this, note that clearly, $\cup_G\mathcal{Z}_G\subseteq \mathcal{Z}$. To prove the other direction of the containment, assume that there is a $p\in\mathcal{Z}\setminus \cup_G\mathcal{Z}_G$. This means that for any graph $G$ if $R(G,p)=0$, then $p\in\mathcal{E}(G)$ for any choice of terminals, while there exists a graph $G_0$, such that $R(G_0,p)=0$. Let $G_0$ to be such a graph with minimum number of vertices and edges and let $s,t$ be two distinct terminals for $G_0$. Since $p\in\mathcal{E}(G_0)$, we have
    \[
    0=R(G_0,p)=-S(G_0,p).
    \]
    On the other hand for the graph $\hat{G}_0$ obtained from $G_0$ by identifying its two terminals we have $R(\hat{G}_0;p)=R(G_0;p)+S(G_0;p)=0$, which contradicts to the choice of $G_0$ and finishes the proof.
\end{proof}

\begin{lemma}\label{lem:zero-free}
    Let $U\subseteq \C$ be an open set such that $U\cap\mathcal{A}=\emptyset$.
    Then for any two-terminal graph $G$ and each $p\in U\setminus \mathcal{E}(G)$ it holds that $1/\hat{y}_G(p)\notin \mathcal{A}$.
\end{lemma}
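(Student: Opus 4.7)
The plan is to argue by contradiction using the substitution lemma. Suppose, towards a contradiction, that $q := 1/\hat{y}_G(p)$ lies in $\mathcal{A}$. Since $p \notin \mathcal{E}(G)$ one has $\hat{y}_G(p) \neq 0$, and $\hat{y}_G(p) = \infty$ would likewise put $p \in \mathcal{E}(G)$; hence $q$ is a well-defined element of $\mathbb{C}$.

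First I would unpack the assumption $q \in \mathcal{A}$: by the definition $\mathcal{A} = \bigcup_{G_1} \mathcal{A}_{G_1}$, there exist a two-terminal graph $G_1$ with $q \notin \mathcal{E}(G_1)$ and a graph $H' \in \mathcal{H}_{G_1}$ such that $1 < |\hat{y}_{H'}(q)| < \infty$ and $\hat{y}_{H'}(q) \notin \mathbb{R}$. I would then form the two-terminal graph $H'(G)$ obtained by substituting $G$ into every edge of $H'$ and apply Lemma~\ref{lemma:multivariate edge replacement} (with all $G_i$ set to $G$) to obtain the rational-function identity
\[
\hat{y}_{H'(G)}(p) \;=\; \hat{y}_{H'}\bigl(1/\hat{y}_G(p)\bigr) \;=\; \hat{y}_{H'}(q).
\]
Finiteness of the right-hand side ensures that $q$ is not a pole of $\hat{y}_{H'}$ and hence forces $S(H'(G); p) \neq 0$, so the identity specialises to a genuine numerical equality at $p$, and $\hat{y}_{H'(G)}(p)$ inherits the properties of being finite, non-real, and of absolute value strictly greater than $1$.

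To close, I would verify $p \in \mathcal{A}_{H'(G)}$, which gives $p \in \mathcal{A}$ and the desired contradiction with $p \in U$ and $U \cap \mathcal{A} = \emptyset$. Since $H'(G)$ lies trivially in $\mathcal{H}_{H'(G)}$ and serves as its own activity witness, and since the non-vanishing of $\hat{y}_{H'(G)}(p)$ forces $R(H'(G); p) + S(H'(G); p) \neq 0$, i.e.\ $p \notin \mathcal{E}(H'(G))$, this is immediate. The only obstacle, which is minor, is the bookkeeping around when the rational-function identity of the substitution lemma becomes a bona fide numerical equality at our specific $p$; the multivariate version handles this cleanly, as one only needs $q$ to be a well-defined complex number (ensured by $p \notin \mathcal{E}(G)$) and $\hat{y}_{H'}(q)$ to be finite (ensured by the activity hypothesis on $q$).
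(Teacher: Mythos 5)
Your argument is correct and follows essentially the same route as the paper: invoke the substitution lemma to transport the activity witness for $q=1/\hat{y}_G(p)$ back to a witness for $p$ via $\hat{y}_{H'(G)}(p)=\hat{y}_{H'}(q)$, then read off $p\in\mathcal{A}$, contradicting $U\cap\mathcal{A}=\emptyset$ (the paper uses the single-variable Lemma~\ref{lem:substitution} and first disposes of the case $p\in\R$ explicitly, but that case is absorbed automatically by your contradiction). One small slip in your opening paragraph: $\hat{y}_G(p)=\infty$ does \emph{not} put $p\in\mathcal{E}(G)$ --- it corresponds to $S(G;p)=0$ and $R(G;p)\neq 0$, which is the opposite of $R(G;p)=-S(G;p)$; however this is harmless, since in that case $q=0\in\R$, and $\hat{y}_{H'}(0)=\infty$ for any two-terminal $H'$, so the activity hypothesis on $q$ would fail anyway and the contradiction still closes.
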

\begin{proof}
    Let $p\in U$. If $p\in\R$, then $1/\hat{y}_G(p)\in\R$ and hence is by definition not contained in $\mathcal{A}.$
    Next suppose $p\notin\R$ and assume towards contradiction that for some two-terminal graphs $G,H$ we have $1/\hat{y}_G(p)\in \mathcal{A}_{H}$ and $p\notin \mathcal{E}(G)$.
    This means that $1<|\hat{y}_H(1/\hat{y}_G(p))|<\infty$ and moreover that $\hat{y}_H(1/\hat{y}_G(p))$ is not real.
    By Lemma~\ref{lem:substitution} we know that $\hat{y}_H(1/\hat{y}_G(p))=\hat{y}_{G(H)}(p)$ and therefore $p$ is contained in $\mathcal{A}_{G(H)}$, since $p\notin \mathcal{E}_{G(H)}$ because  ~\eqref{eq:edge replacement interaction} implies $R(G(H);p)+\SP(G(H);p)\neq 0$.
    This contradicts the fact that $U\cap \mathcal{A}=\emptyset$.
\end{proof}
The next result says something about the boundedness of the set $\mathcal{A}$.
\begin{proposition}\label{eq:density of A}
    The followings are equivalent
    \begin{enumerate}
        \item $\mathcal{A}$ is not dense in $\mathbb{C}$, \label{prop:eqivalence part1}
        \item $\mathcal{A}$ is bounded, i.e. there is an open $U\subseteq\widehat{\mathbb{C}}$ such that $\infty\in U$ and $U\cap \mathcal{A}=\emptyset$,\label{prop:eqivalence part2}
        \item there is an open set $O\subseteq \widehat{\mathbb{C}}$ such that $(1,\infty)\subseteq O$ and $O\cap \mathcal{A}=\emptyset$.
        \label{prop:eqivalence part3}
    \end{enumerate}
\end{proposition}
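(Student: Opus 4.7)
The implications $(2)\Rightarrow(1)$ and $(3)\Rightarrow(1)$ are immediate, since each witnessing open set is a nonempty open subset of $\mathbb{C}\setminus\mathcal{A}$. The substantive content is to prove $(1)\Rightarrow(2)\Rightarrow(3)$.

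For $(1)\Rightarrow(2)$, set $V:=\mathbb{C}\setminus\overline{\mathcal{A}}$, which is a nonempty open set by hypothesis. The key first observation is that $\mathcal{A}_{K_2}\supseteq\mathbb{D}\setminus\mathbb{R}$ (since $\hat{y}_{K_2}(p)=1/p$, exactly as in the proof of Corollary~\ref{cor:disk is hard}), hence $\overline{\mathcal{A}}\supseteq\overline{\mathbb{D}}$ and so $V\subseteq\{|p|>1\}$. Pick $q\in V$ with $B(q,\epsilon)\subseteq V$; necessarily $|q|>1$. Apply Lemma~\ref{lem:zero-free} with $G_m:=K_2^{\parallel_m}$ (for which $1/\hat{y}_{G_m}(p)=p^m$ and $\mathcal{E}(G_m)=\emptyset$): the image $\{p^m:p\in B(q,\epsilon)\}$ is an open subset of $V$. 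Parametrizing $p=qe^w$ with $|w|<\eta$ for some $\eta>0$ proportional to $\epsilon/|q|$, this image becomes $\{q^m e^{mw}\}$, and for $m\eta>\pi$ covers the annulus $\{|q|^m e^{-m\eta}\leq|z|\leq|q|^m e^{m\eta}\}$. Since $|q|>1$, consecutive such annuli overlap for $m$ large, and their union contains $\{|z|>M\}$ for some finite $M$, giving $(2)$.

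For $(2)\Rightarrow(3)$, assume $\{|p|>M\}\subseteq V$ and fix $p_0\in(1,M]$ (the case $p_0>M$ is trivial). Consider the two-parameter family $G_{n,m}:=(K_2^{\bowtie_n})^{\parallel_m}$, namely $m$ parallel copies of a path of length $n$. By Lemmas~\ref{lem:ei-comp} and~\ref{lem:multi-series},
\[
\frac{1}{\hat{y}_{G_{n,m}}(p)} = \left(\frac{np}{(n-1)p+1}\right)^{\!m},
\]
which sends $\infty$ to $c_{n,m}:=(n/(n-1))^m$. Choose $n$ large and $m:=\lfloor(\log p_0)(n-1)\rfloor$, so that $c_{n,m}=\exp(\log p_0+O(1/n))$ is close to $p_0$; a direct estimate yields $|p_0-c_{n,m}|=O(p_0/n)$. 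On the other hand, the Taylor expansion
\[
\frac{1}{\hat{y}_{G_{n,m}}(p)} = c_{n,m} - \frac{mc_{n,m}}{(n-1)p} + O(1/p^2)
\]
at $\infty$ shows that the image $1/\hat{y}_{G_{n,m}}(\{|p|>M\})$ contains a disk around $c_{n,m}$ of radius $\approx mc_{n,m}/((n-1)M)\sim(\log p_0)p_0/M$, which is a positive constant once $n,m$ are chosen as above. For $n$ sufficiently large this radius dominates $|p_0-c_{n,m}|$, so $p_0$ lies in the image; by Lemma~\ref{lem:zero-free} this image is contained in $V$ (the finitely many points in $1/\hat{y}_{G_{n,m}}(\mathcal{E}(G_{n,m}))$ can be avoided by perturbing $n$ slightly if necessary). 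Hence $(1,\infty)\subseteq V$, giving $(3)$.

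The main obstacle is the $(2)\Rightarrow(3)$ step: the set $\{1/\hat{y}_G(\infty):G\text{ series-parallel}\}$ is only a countable dense subset of $(1,\infty)$, so one cannot realize an arbitrary $p_0$ exactly as $1/\hat{y}_G(\infty)$, and one must balance the shrinking of the image disk against the approximation error. The two-parameter family $G_{n,m}$ provides exactly the needed flexibility: taking $n$ large makes $c_{n,m}$ arbitrarily close to $p_0$ while keeping the image radius bounded below.
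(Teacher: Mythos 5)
Your proof is correct and agrees with the paper's on the easy implications and on $(1)\Rightarrow(2)$, where both arguments amount to: take a small ball in the complement, apply $p\mapsto p^m$ (i.e. replace $K_2$ by $K_2^{\parallel m}$), observe the image is an annulus for $m$ large, and let the overlapping annuli cover a neighbourhood of $\infty$. (Your parametrization $p=qe^w$ is just a restatement of the paper's arc picture.)

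The interesting divergence is in $(2)\Rightarrow(3)$. The paper proceeds in two stages: it first shows that the images of $\{|p|>M\}$ under $p\mapsto 1/\hat y_{K_2^{\bowtie n}}(p)=np/(1+(n-1)p)$ contain, for consecutive $n$, overlapping real intervals $(nM/(1+nM-M),\, n/(n-1)]$, whose union is an open set containing an interval $(1,w)$; and it then powers this up with $K_2^{\parallel m}$ to blow $(1,w)$ out to all of $(1,\infty)$. You instead work with the single two-parameter family $G_{n,m}=(K_2^{\bowtie n})^{\parallel m}$, which sends $\infty$ to $c_{n,m}=(n/(n-1))^m$, and for a fixed target $p_0\in(1,M]$ you tune $m\approx (\log p_0)(n-1)$ so that $c_{n,m}\to p_0$ with error $O(1/n)$, while the image of $\{|p|>M\}$ is a disk around $c_{n,m}$ whose radius stays bounded below by a positive constant $\sim(\log p_0)\,p_0/M$. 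This is a genuinely different organization: the paper builds an open cover of $(1,\infty)$ by chaining intervals, you hit each point of $(1,\infty)$ directly by balancing approximation error against image size. Your approach is tighter conceptually but more delicate in the estimates: the step from ``the Taylor expansion
\[
\frac{1}{\hat y_{G_{n,m}}(p)}=c_{n,m}-\frac{m\,c_{n,m}}{(n-1)p}+O(1/p^2)
\]
shows the image contains a disk of radius $\approx m c_{n,m}/((n-1)M)$'' is not literally a consequence of a formal expansion; one must check that the higher-order terms do not overwhelm the linear one as $n,m\to\infty$. The clean way is to note that $p\mapsto w=np/(1+(n-1)p)$ sends $\{|p|>M\}$ to a small disk $D$ of radius $\asymp 1/((n-1)M)$ about $n/(n-1)$, that $w\mapsto w^m$ is univalent on $D$ because $m$ times the angular width of $D$ is $\asymp(\log p_0)/M<2\pi$, and then invoke the Koebe quarter theorem (or a direct distortion estimate) to get a disk in the image of radius $\asymp m c_{n,m}/((n-1)M)$. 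With that step filled in, your comparison $|p_0-c_{n,m}|=O(p_0/n)$ versus radius $\asymp(\log p_0)p_0/M$ is correct, and the argument goes through. The parenthetical about ``perturbing $n$ to avoid the exceptional set'' is unnecessary: $\mathcal E(G_{n,m})\subseteq\{1,-1/(n-1)\}\subseteq[-1,1]$ never meets $\{|p|>M\}$ once $M>1$.
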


\begin{proof}
Observe that the implications (\ref{prop:eqivalence part3}) $\Rightarrow$ (\ref{prop:eqivalence part1}) and (\ref{prop:eqivalence part2}) $\Rightarrow$ (\ref{prop:eqivalence part1}) are true by definition. 

Let us prove (\ref{prop:eqivalence part1}) $\Rightarrow$ (\ref{prop:eqivalence part2}). Thus let us assume that $\mathcal{A}$ is not dense in $\mathbb{C}$, in particular there exists an open set $U\subseteq \mathbb{C}$, such that $U\cap \mathcal{A}=\emptyset$. 
As observed in the introduction we have that $\mathbb{D}\setminus \{0\}\subseteq \overline{\mathcal{A}}$ (because for the two terminal graph $K_2$ we have $\hat{y}_{K_2}(p)=1/p)$.
We may therefore assume that for any $p\in U$ we have $|p|>1$. 
Since $U$ is not empty and open, we can find an arc $\Gamma$ of a circle of radius $r>1$ with central angle $\theta>0$ in $U$ and such that there is $\varepsilon>0$ such that the $\varepsilon$-neighborhood of $\Gamma$ is still part of $U$.

Take $D_m=K_2^{\parallel m}$ for $m\ge 1$. Then $\frac{1}{\hat{y}_{D_m}(p)}=p^m$ and for any $m$ the set $U_m=\frac{1}{\hat{y}_{D_m}(U)}$ contains an open neighbourhood of an arc of central angle at least $m\theta$. More specificely, if $m$ is sufficiently large, the $m$th power of the $\varepsilon$-neighborhood of $\Gamma$ become an annulus, thus showing that
\[
 U'_m:=\{p\in\mathbb{C}~|~(r-\varepsilon)^m<|p|<(r+\varepsilon)^m\}\subseteq U_m.
\]
By the previous lemma each $U_m$ is disjoint from $\mathcal{A}$, since the exceptional sets of the parallel compositions $K_2^{\parallel m}$ are empty.
We claim that $U'=\bigcup_{m\in\mathbb{N}}U'_m \cup\{\infty\}$ will witness the boundedness of $\mathcal{A}$. First observe that if $m$ is sufficiently large, then $(r+\varepsilon)^m>(r-\varepsilon)^{m+1}$ and thus the union of the annuli $U'_{m}, U'_{m+1}$ form a connected annulus. As $r+\varepsilon>1$, we obtain that $\cup_{m\in\mathbb{N}}U'_m\supseteq \mathbb{C}\setminus B(0,M)$ for some $M>0$ and is disjoint from $\mathcal{A}$. This shows that $\mathcal{A}$ is a bounded set, proving (\ref{prop:eqivalence part1}) $\Rightarrow$ (\ref{prop:eqivalence part2}).

Now let us prove (\ref{prop:eqivalence part2}) $\Rightarrow$ (\ref{prop:eqivalence part3}), i.e. assume that $\mathcal{A}$ is bounded. Let $M>1$ such that for all $p\in \mathcal{A}$, $|p|<M$ and let $U=\{p\mid |p|>M\}$.
    Observe that for $G_1=K_2$ we have $1/\hat y_{K_2}(p)=p$ so $1/\hat y_{K_2}(U)=U$. 
    Consider for $n\in \mathbb{N}$, $G_n=K_2^{\series_n}$ and the map $f_n$ defined by $p\mapsto 1/\hat{y}_{G_n}(p)$.
  By Lemma~\ref{lem:zero-free} it follows that, 
  \begin{align}\label{eq:statement f_n(U)}
\text{ for all $n$}\quad    f_n(U\setminus \mathcal{E}(K_2^{\bowtie_n}))\cap \mathcal{A}=\emptyset.
  \end{align}
 The exceptional set of $K_2^{\bowtie_n}$ consists of those $p$ for which $R(K_2^{\bowtie_n};p)+\SP(K_2^{\bowtie_n};p)=0$, or equivalently those $p$ for which $\SP(K_2^{\bowtie_n};p)\neq 0$ and $\hat{y}_{K_2^{\bowtie_n}}(p)=0$ or $R(K_2^{\bowtie_n};p)=0=\SP(K_2^{\bowtie_n};P)=0$.
    Now since $R(K_2^{\bowtie_n};p)=R(K_2;p)^n=(1-p)^n$ only has $p=1$ as zero and $\hat{y}_{K_2^{\bowtie_n}}(p)=\frac{1+(n-1)p}{np}$, by Lemma~\ref{lem:multi-series}, it follows that 
    \begin{align}\label{eq:statement exceptional set}
 \text{ for all $n$}\quad    \mathcal{E}(K_2^{\bowtie_n})\subseteq[-1,1].     
    \end{align}
This implies that the exceptional sets $\mathcal{E}(K_2^{\bowtie_n})$ are disjoint from $U$ and hence by~\eqref{eq:statement f_n(U)} we have
\begin{align}\label{eq:statement f_n(U) improved}
\text{ for all $n$}\quad   f_n(U)\cap \mathcal{A}=\emptyset.
  \end{align}
By Lemma~\ref{lem:multi-series} we have $f_n(p)=\frac{n}{1/p+n-1}=\frac{pn}{1+p(n-1)}$ and thus $f_n$ is a M\"obius transformation with real coefficients and hence preserves the real line.  
  The set $U_1:=U$ contains the real interval $(M,\infty]$, and since $f_n$ preserves orientation (as its derivative is positive) it follows that $U_n:=f_n(U)$ contains the real interval 
    $$\left(f_{n}(M),{f_{n}(\infty)}\right]=\left(\frac{nM}{1+nM-M},\frac{n}{n-1}\right].$$
    For $n>M+2$ we have $\frac{nM}{1+nM-M}<\frac{n+1}{n}$ and therefore 
    $U_n\cap \mathbb{R}$ and $U_{n+1}\cap \mathbb{R}$ have a nonempty intersection.
    Since $\frac{nM}{1+nM-M}\to 1$ as $n\to \infty$, it follows that 
    $O_1:=\bigcup_{n\in\mathbb{N}}U_n$ contains an interval of the form $(1,w)$ for some $w>1$. 
  By~\eqref{eq:statement f_n(U) improved} $O_1$ is disjoint from $\mathcal{A}$. 
    
  Next take $D_m=K_2^{\parallel m}$. Then $\frac{1}{\hat{y}_{D_m}(p)}=p^m$ and $O_m:=\frac{1}{\hat{y}_{D_m}(O_1)}$ contains an interval of the form $(1,w^m)$ and is disjoint from $\mathcal{A}$ by the previous lemma, since the exceptional sets of the parallel compositions $K_2^{\parallel m}$ are empty.
    Thus $O=\bigcup_{m\in\mathbb{N}}O_m$ is an open set containing $(1,\infty)$ such that $O\cap \mathcal{A}=\emptyset$, as desired.
\end{proof}

We can now provide a proof of Proposition~\ref{prop:unbounded zeros?}

\begin{proof}[Proof of Proposition~\ref{prop:unbounded zeros?}]
By the previous proposition, the closure of $\mathcal{A}$ is equal to $\mathbb{C}$ if and only if  $\mathcal{A}$ is unbounded, if and only if there exists $p>1$ such that $p\in \overline{\mathcal{A}}$.
The proposition now follows by Lemma~\ref{lem:closure of all activity and zeros}.
\end{proof}

\subsection{Activity locus and the unit circle}
In this subsection we aim to show that all but finitely many points of the closed unit circle are part of the activity locus. 
To do so, let us recall the multivariate version of the edge replacement construction see Lemma~\ref{lemma:multivariate edge replacement}. Define a template graph $(H,c)$ to be a two-terminal graph $H$ with edge label  $c:E(H)\to\{a_1,a_2\}$. For any $G_1,G_2$ two-terminal graphs let $H(G_1,G_2)$ be the graph obtained by replacing each edge labeled by $a_i$ with $G_i$. This is a special case of the construction described in Lemma~\ref{lemma:multivariate edge replacement}.

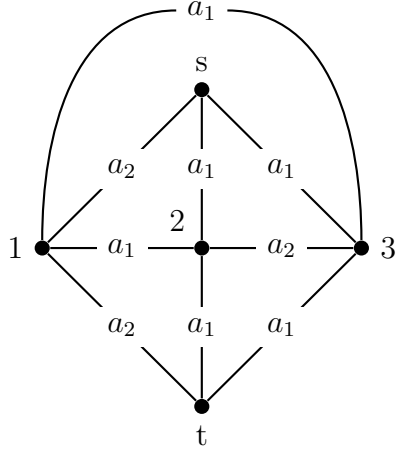
\begin{figure}[ht]
    \centering
    \begin{tikzpicture}[scale=0.7]
    \node[circle,fill,inner sep=2pt,label=above:s] (s) at (0,3) {};
    \node[circle,fill,inner sep=2pt,label=left:1] (1) at (-3,0) {};
    \node[circle,fill,inner sep=2pt,label=135:2] (2) at (0,0) {};
    \node[circle,fill,inner sep=2pt,label=right:3] (3) at (3,0) {};
    \node[circle,fill,inner sep=2pt,label=below:t] (t) at (0,-3) {};
    \node[fill=white] (a) at (0,4.5) {$a_1$};
    \draw[thick] (s) -- (1) node [midway,fill=white] {$a_2$};
    \draw[thick] (s) -- (2) node [midway,fill=white] {$a_1$};
    \draw[thick] (s) -- (3) node [midway,fill=white] {$a_1$};
    \draw[thick] (1) edge[out=90,in=180,-] (a);
    \draw[thick] (a) edge[out=0,in=90] (3);
    \draw[thick] (1) -- (2) node [midway,fill=white] {$a_1$};
    \draw[thick] (1) -- (t) node [midway,fill=white] {$a_2$};
    \draw[thick] (2) -- (3) node [midway,fill=white] {$a_2$};
    \draw[thick] (2) -- (t) node [midway,fill=white] {$a_1$};
    \draw[thick] (3) -- (t) node [midway,fill=white] {$a_1$};
\end{tikzpicture}
    \caption{This is the template graph used in Lemma~\ref{lemma:pentagonal}. If $G_1,G_2$ are two-terminal graphs, then by replacing each $a_i$ edge with $G_i$, the obtained graph is denoted by $H(G_1,G_2)$.}
    \label{fig:pentagonal}
\end{figure}

Let us define the pentagon template $H$ as in Figure~\ref{fig:pentagonal}. 
\begin{lemma}\label{lemma:pentagonal}
    Let $G_1,G_2$ be two two-terminal graphs and let $(H,c)$ be the pentagon template from Figure~\ref{fig:pentagonal}. Then
    \[
    \hat{y}_{H(G_1,G_2)}(p)=F(\hat y_{G_1}(p),\hat y_{G_2}(p)),
    \]
    where
    \begin{equation*}
    F(y_1,y_2)\!=\!\frac{{\left(y_{1}^{5}\!+\!y_{1}^{4}\!+\!y_{1}^{3}\!+\!y_{1}^{2}\!+\!y_{1}\!+\!1\right)  } y_{2}^{3} \!-\! 2  {\left(y_{1}^{2}\!+\!y_{1}\!+\!1\right)} y_{2}^{2} \!-\! 2  y_{1}^{2} \!-\! {\left(y_{1}^{3}\!+\!y_{1}^{2}\!+\!2  y_{1}\!+\!2\right)} y_{2}\!+\!2  y_{1}\!+\!6}{2  {\left({\left(y_{1}^{3}\!+\!y_{1}^{2}\!+\!2  y_{1}\!+\!2\right)} y_{2}^{2}\!+\!{\left(2  y_{1}^{2} \!-\! 5  y_{1} \!-\! 9\right)} y_{2} \!-\! 6  y_{1}\!+\!12\right)}}
    \end{equation*}
    In particular,
    \[
    |F(e^{it},e^{-it})|^2=\frac{8 \, \sin\left(t\right)^{4} + \sin\left(t\right)^{2}}{2 \, {\left(12 \, \cos\left(t\right)^{2} - 25 \, \cos\left(t\right) + 13\right)}}.
    \]
    
\end{lemma}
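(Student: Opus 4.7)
The plan is to apply the multivariate edge replacement formula of Lemma~\ref{lemma:multivariate edge replacement} directly to the pentagon template. First I would regard $(H,c)$ as a two-terminal graph with two formal failure variables: each edge labelled $a_i$ carries the weight $p_i$. Computing $R(H;p_1,p_2)$ and $S(H;p_1,p_2)$ is then a finite enumeration: $H$ has $5$ vertices and $9$ edges, so one lists the connected spanning subgraphs and the $s$--$t$ splits and sums the appropriate monomials. A somewhat less error-prone route is to apply the multivariate deletion-contraction recurrence (the obvious generalisation of Lemma~\ref{lem:con-del}, which holds for the same reason) edge by edge until one arrives at series-parallel pieces, and then use Lemma~\ref{lem:recursion}. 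This produces the multivariate virtual edge interaction $\hat y_H(p_1,p_2)=R(H;p_1,p_2)/S(H;p_1,p_2)+1$ as an explicit rational function in $p_1,p_2$.

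Next, Lemma~\ref{lemma:multivariate edge replacement} yields
\[
\hat y_{H(G_1,G_2)}(p)=\hat y_H(p_1,p_2)\Big|_{p_i=1/\hat y_{G_i}(p)}.
\]
Setting $y_i=\hat y_{G_i}(p)$ and $p_i=1/y_i$, and clearing denominators by multiplying numerator and denominator by a suitable power of $y_1y_2$, one obtains the closed-form rational function $F(y_1,y_2)$ stated in the lemma. This step is pure algebra; to guard against bookkeeping mistakes I would cross-check the final expression with a computer algebra system, and additionally verify the sanity checks that $F(1,1)=1$ (since if $\hat y_{G_i}(p)=1$ then $R(G_i;p)=0$ and $H(G_1,G_2)$ is disconnected in the appropriate sense) and that $F$ is symmetric under the obvious edge automorphisms of $H$.

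For the final assertion we specialise to $y_1=e^{it}$, $y_2=e^{-it}$. The identity $y_1y_2=1$ collapses many monomials in the numerator and denominator of $F$, and writing $y_1+y_2=2\cos t$, $y_1^k+y_2^k=2\cos(kt)$ rewrites both as real-linear combinations of $1,\cos t,\cos 2t,\cos 3t$ plus $i\sin t$ times analogous expressions. Computing $|F(e^{it},e^{-it})|^2=(\Re \mathrm{num})^2+(\Im \mathrm{num})^2$ over $(\Re\mathrm{den})^2+(\Im\mathrm{den})^2$ and reducing using $\sin^2 t+\cos^2 t=1$ (and the Chebyshev identities $\cos(kt)$ in $\cos t$) produces a ratio of polynomials in $\cos t$ and $\sin t$, which after simplification matches $\frac{8\sin(t)^4+\sin(t)^2}{2(12\cos(t)^2-25\cos(t)+13)}$.

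The main obstacle is not conceptual but purely computational: the polynomial $F$ is large enough that a by-hand derivation is error-prone, so the natural approach is to do the symbolic manipulation in a computer algebra system, documenting the key intermediate expressions (the multivariate $R(H;p_1,p_2)$ and $S(H;p_1,p_2)$) in the paper so that the reader can verify each step independently.
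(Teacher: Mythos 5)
Your proposal follows essentially the same route as the paper: the paper's proof is a one-line invocation of Lemma~\ref{lemma:multivariate edge replacement} followed by a symbolic computation in Sage, and you propose exactly that (multivariate $R$ and $S$ for the template, substitute $p_i=1/\hat y_{G_i}(p)$, simplify, then specialise to $y_1=e^{it}$, $y_2=e^{-it}$). One small caution: your suggested sanity check ``$F(1,1)=1$'' is spurious --- both the numerator and the denominator of the stated $F$ vanish at $(y_1,y_2)=(1,1)$, so $F(1,1)$ is of the form $0/0$, consistent with the fact that at $p_1=p_2=1$ every edge fails and both $R(H;1,1)$ and $S(H;1,1)$ are zero; so this particular check is not usable, though it does not affect the correctness of the main argument.
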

\begin{proof}
    The proof is a direct application of Lemma~\ref{lemma:multivariate edge replacement} and the computation is straightforward; we include a Sage code~\cite{sagemath} for the computation in the \hyperlink{code:pentagon}{Appendix}.
\end{proof}








To prove Proposition~\ref{prop:zeros vs active} we need the following technical lemma about $F(y_1,y_2)$ defined in the previous lemma.
\begin{lemma}
For the function $F$ defined in the previous lemma we have
\[
|F(e^{it},e^{-it})|^2\ge 1
\]
if and only if $|cos(t)|\ge \tfrac{1}{4} (5 \sqrt{2} - 4)$.
\end{lemma}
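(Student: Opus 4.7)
The plan is to reduce the inequality $|F(e^{it},e^{-it})|^2 \ge 1$ to a polynomial inequality in the single real variable $c := \cos(t)$ and then to exhibit a decisive factorization of the resulting quartic.

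First, substituting $\sin^2(t) = 1 - c^2$ into the formula from the previous lemma collapses the numerator to $(1-c^2)(9-8c^2)$, yielding
\[
|F(e^{it},e^{-it})|^2 = \frac{(1-c^2)(9-8c^2)}{2(12c^2 - 25c + 13)}
\]
as a rational function of $c \in [-1,1]$. Before clearing the denominator, note that $12c^2 - 25c + 13 = (12c-13)(c-1)$, and both factors are non-positive on $[-1,1]$ (vanishing simultaneously only at $c=1$), so the denominator is non-negative on this range. Multiplying the inequality $|F|^2 \ge 1$ through by the denominator therefore preserves its direction, yielding
\[
(1-c^2)(9-8c^2) \ge 2(12c^2 - 25c + 13),
\]
which after expansion becomes $8c^4 - 41c^2 + 50c - 17 \ge 0$.

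The decisive step is the factorization
\[
8c^4 - 41c^2 + 50c - 17 = (c-1)^2(8c^2 + 16c - 17).
\]
This is best seen by observing that both the quartic and its derivative vanish at $c = 1$, so $(c-1)^2$ divides the quartic; the complementary quadratic can then be recovered by polynomial division (or checked by direct expansion). Since $(c-1)^2 \ge 0$, the inequality is equivalent to $8c^2 + 16c - 17 \ge 0$. The quadratic formula gives the two roots $c = \frac{-4 \pm 5\sqrt{2}}{4}$, and since $\frac{-4 - 5\sqrt{2}}{4} < -1$ this root lies outside the range of $\cos(t)$, leaving only the threshold $c_* := \frac{5\sqrt{2} - 4}{4}$. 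Thus on $[-1,1]$ the inequality holds precisely when $c$ reaches this threshold, matching the claimed value $\tfrac{1}{4}(5\sqrt 2 - 4)$.

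The main (and essentially only) obstacle is spotting and verifying the factorization; once the double root at $c = 1$ is recognized, the rest of the argument is a routine quadratic-formula calculation together with the check that the extraneous root falls outside $[-1,1]$.
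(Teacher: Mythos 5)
Your proof is correct and follows essentially the same route as the paper: rewrite the expression in terms of $c=\cos t$, clear the (non-negative) denominator, and reduce to a polynomial inequality whose only factor that can change sign on $[-1,1]$ is $8c^2+16c-17$. The paper first cancels the common factor $1-\cos t$ from numerator and denominator (obtaining a cubic inequality $-(c-1)(8c^2+16c-17)\ge 0$) whereas you clear the denominator directly and then extract $(c-1)^2$ from the resulting quartic; the two are the same computation. One small imprecision in your wording: the two linear factors $12c-13$ and $c-1$ do not ``vanish simultaneously at $c=1$'' (only $c-1$ does there, while $12c-13=-1$); what you mean is that their product vanishes only at $c=1$ on $[-1,1]$.
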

\begin{proof}
By the previous lemma we know that
\[
|F(e^{it},e^{-it})|^2=\frac{8 \, \sin\left(t\right)^{4} + \sin\left(t\right)^{2}}{2 \, {\left(12 \, \cos\left(t\right)^{2} - 25 \, \cos\left(t\right) + 13\right)}}=\frac{(1-\cos(t) ) (\cos(t) + 1) (9-8 \cos^2(t))}{2(1-\cos(t) ) (13-12 \cos(t))}.
\]
Since $13-12\cos(t)>0$, $|F(e^{it},e^{-it})|^2\ge 1$ if and only if
\begin{eqnarray*}
(\cos(t) + 1) (9-8 \cos^2(t)) -2(13 -12 \cos(t) )&\ge 0,\\
\Leftrightarrow -8 \cos^3(t) - 8 \cos^2(t) + 33 \cos(t) - 17 &\ge 0,\\
\Leftrightarrow -(\cos(t) - 1) (8 \cos^2(t) +16\cos(t) - 17) &\le 0.
\end{eqnarray*}
Note that the polynomial $(x-1)(8x^2+16x-17)\le 0$ if and only if $\tfrac{1}{4} (5 \sqrt{2} - 4)\le x\le 1$ or $x\le -\tfrac{1}{4} (4 + 5 \sqrt{2})<-1$. Since  $\cos(t)$ takes values from $[-1,1]$ we obtain the desired statement.
\end{proof}

\begin{proposition}
    Let $p\in\mathbb{C}$ such that $|p|=1$ and $p^k\neq 1$ for $k=1,\dots,9$. Then
    \[
    p\in\mathcal{A}.
    \]
\end{proposition}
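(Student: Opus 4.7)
The plan is to exhibit a single two-terminal graph witnessing $p\in\mathcal{A}$, built from the pentagon template $H$ of Figure~\ref{fig:pentagonal} with parallel compositions of $K_2$ at its edges. Since $\hat{y}_{K_2}(p)=1/p$ and parallel composition multiplies the virtual edge interaction (Lemma~\ref{lem:ei-comp}), we have $\hat{y}_{K_2^{\parallel a}}(p)=p^{-a}$ for every $a\geq 1$; taking $G_1=K_2^{\parallel a}$ and $G_2=K_2^{\parallel b}$ in Lemma~\ref{lemma:pentagonal} gives $\hat{y}_{H(G_1,G_2)}(p)=F(p^{-a},p^{-b})$. The idea is to choose $a,b$ so that $(p^{-a},p^{-b})$ is exactly or approximately a conjugate pair $(e^{it_0},e^{-it_0})$ with $|\cos t_0|>\frac{1}{4}(5\sqrt{2}-4)$; the previous lemma, combined with continuity of $F$, then forces $|F(p^{-a},p^{-b})|>1$.

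The construction of $a,b$ splits into two cases. If $p$ is not a root of unity, the orbit $\{p^{-k}:k\in\mathbb{N}\}$ is dense on the unit circle, so I fix any $t_0$ with $|\cos t_0|>\frac{1}{4}(5\sqrt{2}-4)$---for instance $t_0=\pi/5$, because $\cos(\pi/5)=\frac{1+\sqrt{5}}{4}$ and a direct check reduces the inequality $1+\sqrt{5}>5\sqrt{2}-4$ after squaring to $\sqrt{5}>2$---and pick $a$ and $b$ independently so that $p^{-a}$ is close to $e^{it_0}$ and $p^{-b}$ is close to $e^{-it_0}$. If instead $p$ is a primitive $n$-th root of unity, the hypothesis $p^k\neq 1$ for $k\leq 9$ forces $n\geq 10$; writing $p=e^{2\pi ik/n}$ with $\gcd(k,n)=1$, I choose $a\in\{1,\dots,n-1\}$ with $ka\equiv -1\pmod n$, so that $p^{-a}=e^{2\pi i/n}$, and set $b=n-a\in\{1,\dots,n-1\}$. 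Then $p^{-b}=p^{a-n}=p^{a}=\overline{p^{-a}}$ since $|p|=1$, so $(p^{-a},p^{-b})=(e^{it_0},e^{-it_0})$ exactly with $t_0=2\pi/n$, and $|\cos t_0|\geq\cos(\pi/5)>\frac{1}{4}(5\sqrt{2}-4)$.

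It remains to arrange for the witness to be non-real. Note that $p\notin\{\pm 1\}$, since $p^1,p^2\neq 1$. If $F(p^{-a},p^{-b})\notin\mathbb{R}$, set $G=H(G_1,G_2)$ and conclude $p\in\mathcal{A}_G\subseteq\mathcal{A}$. Otherwise set $G=H(G_1,G_2)\parallel K_2$: by Lemma~\ref{lem:ei-comp} its virtual edge interaction at $p$ equals $F(p^{-a},p^{-b})/p$, which has the same modulus $>1$ but is now non-real because the numerator is a nonzero real and $p$ is non-real. The main technical obstacle I anticipate is verifying that the pair $(e^{it_0},e^{-it_0})$ is not a pole of the rational function $F$ for our chosen $t_0$, so that the continuity argument in the non-root-of-unity case is valid; this amounts to a short explicit computation using the formula for the denominator of $F$ in Lemma~\ref{lemma:pentagonal}.
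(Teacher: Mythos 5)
Your argument follows the same route as the paper: use the pentagon template of Lemma~\ref{lemma:pentagonal} with parallel copies of $K_2$ at the labelled edges, and invoke the formula for $|F(e^{it},e^{-it})|^2$ together with the criterion $|\cos t|>\tfrac14(5\sqrt2-4)$, splitting into the root-of-unity case (choose exponents so the pair is exactly conjugate) and the dense-orbit case (approximate a fixed conjugate pair by density of $\{p^{-k}\}$). Your explicit choice $ka\equiv -1\bmod n$, $b=n-a$ differs cosmetically from the paper's $k$ with $\arg p^{-k}\in I$ and $\ell$ with $p^{-k\ell}=p^k$, but both produce a conjugate pair $(p^{-a},p^{-b})=(e^{it_0},e^{-it_0})$ with $\cos t_0$ in the right range. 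Your worry about a pole of $F$ at the target point is real but harmless: the denominator of $F$ at $(e^{it},e^{-it})$ factors (up to a unit) as $-(e^{it}-1)^2(3e^{it}-2)$, which vanishes on the unit circle only at $t=0$, and this is consistent with the $|F|^2$ formula being finite away from $t=0$.

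The one place you are genuinely more careful than the paper is the clause $\hat y_G(p)\notin\mathbb R$ in the definition of $\mathcal A_{G_0}$. This is not automatic from $|F(p^{-a},p^{-b})|>1$: if $\hat y_{G_0}(p)\in\mathbb R$ then $\hat y_G(p)\in\mathbb R$ for \emph{every} $G\in\mathcal H_{G_0}$ (parallel multiplies real $\hat y$'s, and for series the shifted $y$-values all lie on the fixed ray $(1-p)\mathbb R$, so $\hat y$ stays real), and then $p\notin\mathcal A_{H(G_1,G_2)}$. The paper's proof does not address this possibility. Your $\parallel K_2$ repair is exactly right: a nonzero real times the non-real unit $1/p$ is non-real of the same modulus, and the exceptional-set requirement for the new base graph is again automatic because its interaction is finite and nonzero. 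So the proposal is correct, takes essentially the paper's approach, and closes a small but genuine gap in the paper's own justification that $p\in\mathcal A$.
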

\begin{proof}
    Let $\arg(p)=2\pi\alpha$ for some $\alpha\in(0,1]$ and denote $I=\{t\in(-\pi,\pi]~|~\cos(t)> \tfrac{1}{4} (5 \sqrt{2} - 4) \}$. 

    First, let us assume that $\alpha=\frac{n}{m}$ is rational, where $\gcd(n,m)=1$. Since $\cos(2\pi/10)>\frac{1}{4}(5\sqrt{2}-4)$, we know that $m\ge 10$. This means that there exists a $k,\ell\in\mathbb{N}$ such that $1\neq \arg(p^{-k})\in I$ and $p^{-k\ell}=p^{k}$. If we let $G_1=K_2^{\parallel k}$ and $G_2=K_2^{\parallel k\ell}$, then
    \[
    |\hat y_{H(G_1,G_2)}(p)|=|F(\hat y_{G_1}(p),\hat y_{G_2}(p))|=|F(p^{-k},p^{-k\ell})|=|F(p^{-k},p^{k})|>1,
    \]
    by the previous lemma.
     Since the exceptional set of $K_2^{m\parallel}$ is empty for any $m\geq 1$, this shows that $p\in\mathcal{A}$.

    Now let us assume that $\alpha$ is irrational. 
    In what follows we use the fact that for any irrational angle $\beta$ the orbit $\{e^{k(2\pi i \beta)}\mid k\in \mathbb{N}\}$ is dense in the unit circle.
    Therefore there exists a $k\in\mathbb{N}$ such that $0\neq \arg(p^{-k}) \in  I$.
    Thus by the previous lemma $|F(p^{-k},p^k)|>1$.
    Since $F(y_1,y_2)$ is a continuous function, there exists a $\delta>0$ such that $|F(p^{-k},y_2)|>1$ for all $y_2\in B_\delta(p^{k})$. 
    Since $-k\alpha$ is irrational there exists $\ell\in\mathbb{N}$ such that $(p^{-k})^\ell \in B_\delta(p^{k})$. This means that $|f(p^{-k},p^{-k\ell})|>1$. Now let $G_1=K_2^{\parallel k}$ and $G_2=K_2^{\parallel k\ell}$, then
    \[
    |\hat y_{H(G_1,G_2)}(p)|=|F(\hat y_{G_1}(p),\hat y_{G_2}(p))|=|F(p^{-k},p^{-k\ell})|>1,
    \]
    showing that $p\in \mathcal{A}$.
\end{proof}

In the next proposition we report, further points from the unit circle that are contained in the activity locus. 
\begin{proposition}\label{prop:5to9}
    For $k\in\{5,6,7,8,9\}$ there exists a two-terminal graph $G_k$, such that 
    \[
        |\hat y_{G_k}(e^{2\pi i/k})|>1 \textrm{ and } \gcd(R(G_k;p),p^k-1)=p-1.
    \]
    In particular, $e^{2\pi i/k}\in \mathcal{A}$.
\end{proposition}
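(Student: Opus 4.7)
The plan is case-by-case: for each $k \in \{5,6,7,8,9\}$ I would exhibit an explicit small two-terminal graph $G_k$, produced by computer search, and verify both conditions by direct symbolic computation in the cyclotomic field $\mathbb{Q}(\zeta_k)$, where $\zeta_k := e^{2\pi i/k}$. Candidate graphs would be enumerated starting from small series-parallel compositions of $K_2$, $K_2^{\parallel m}$ and $K_2^{\bowtie m}$, and moving to small templates filled with such pieces via Lemma~\ref{lemma:multivariate edge replacement} if the purely series-parallel candidates do not suffice.

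For each candidate $G$, compute $R(G;p)$ and $S(G;p)$ as polynomials in $\mathbb{Z}[p]$ via Lemma~\ref{lem:recursion}, and form $\hat{y}_G(p) = 1 + R(G;p)/S(G;p)$. Then verify $|\hat{y}_G(\zeta_k)|^2 > 1$ by evaluating $\hat{y}_G(\zeta_k)\,\overline{\hat{y}_G(\zeta_k)}$ in $\mathbb{Q}(\zeta_k)$ using $\overline{\zeta_k} = \zeta_k^{-1}$ and checking that the resulting rational number exceeds $1$; and verify $\gcd(R(G;p),p^k-1) = p-1$ by the Euclidean algorithm in $\mathbb{Q}[p]$, equivalently by factoring $R(G;p)$ into cyclotomic factors and confirming $\Phi_d(p) \nmid R(G;p)$ for each $d \mid k$ with $d > 1$.

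The concluding assertion $e^{2\pi i/k} \in \mathcal{A}$ is then obtained by taking $G_0 = G_k$ in the definition of $\mathcal{A}_{G_0}$: the gcd condition gives $R(G_k;\zeta_k) \neq 0$, which combined with $|\hat{y}_{G_k}(\zeta_k)| > 1$ ensures $\hat{y}_{G_k}(\zeta_k)$ is a finite number of modulus exceeding one and in particular $\zeta_k \notin \mathcal{E}(G_k)$. The remaining non-reality $\hat{y}_{G_k}(\zeta_k) \notin \mathbb{R}$ would be read off the cyclotomic computation for the specific $G_k$ found; the search would retain only candidates for which this holds (the non-real situation being generic, since $\hat{y}_{G_k}(\zeta_k) \in \mathbb{R}$ amounts to the extra algebraic identity $\hat{y}_{G_k}(\zeta_k) = \hat{y}_{G_k}(\zeta_k^{-1})$).

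The main obstacle is producing a sufficiently small $G_k$ for each of the five values of $k$: the template approach of the previous proposition fails here precisely because the discrete orbits $\{\zeta_k^{-j} : j \in \mathbb{N}\}$ on the unit circle do not land close enough to the ``good'' interval $\{t : |\cos t| \geq \tfrac{1}{4}(5\sqrt{2}-4)\}$ singled out by the preceding lemma. For each $k$ the $G_k$ must therefore be produced ad hoc by a finite computer search, and its verification can then be presented as a short appendix analogous to the pentagonal Sage computation hyperlinked earlier.
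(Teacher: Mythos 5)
Your approach is essentially the paper's: exhibit explicit two-terminal graphs $G_k$ found by search and verify the two displayed conditions by direct symbolic computation in $\mathbb{Q}(\zeta_k)$. The paper realises this by giving the concrete adjacency matrices $A_5,\dots,A_9$, checking them with the Sage script in the appendix, and then arguing that the two conditions force $\zeta_k\notin\mathcal{E}(G_k)$: since $\gcd(R(G_k;p),p^k-1)=p-1$ and $\zeta_k\neq 1$ one has $R(G_k;\zeta_k)\neq 0$, and $1<|\hat y_{G_k}(\zeta_k)|<\infty$ then gives $R(G_k;\zeta_k)+S(G_k;\zeta_k)\neq 0$.

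Your one additional observation deserves to be singled out, because the printed proof is silent on it. By the definition of $\mathcal{A}_{G_k}$, the conclusion $\zeta_k\in\mathcal{A}$ also requires $\hat y_{G}(\zeta_k)\notin\mathbb{R}$ for some $G\in\mathcal{H}_{G_k}$, and this is not a formal consequence of the two listed conditions, nor is it recoverable by passing to another $G$ in $\mathcal{H}_{G_k}$. Parallel composition multiplies virtual interactions and so preserves reality, $\hat y_{G_k^T}=\hat y_{G_k}$, and a short M\"obius computation shows that if $H_1,\dots,H_m$ have real finite virtual interactions $c_1,\dots,c_m\neq 1$ at $\zeta_k$ with $a:=\sum_i(c_i-1)^{-1}$, then $\hat y_{H_1\bowtie\cdots\bowtie H_m}(\zeta_k)=1+1/a$, again real. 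Hence if $\hat y_{G_k}(\zeta_k)$ were real, every $G\in\mathcal{H}_{G_k}$ would have real virtual interaction at $\zeta_k$ and $\zeta_k\notin\mathcal{A}_{G_k}$. Your plan to retain only search candidates with non-real $\hat y_{G_k}(\zeta_k)$ is therefore the correct fix, and a complete version of the appendix verification ought to print this as an extra check alongside the modulus and gcd.
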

\begin{proof}
    Let $G_k$ be the graph given by the adjacency matrix $A_k$, where
\[
A_9=\left(\begin{array}{ccccc}
     0& 0& 1& 1 & 8 \\
     0& 0& 1& 1 & 8 \\
     1& 1& 0& 8 & 2 \\
     1& 1& 8& 0 & 2 \\
     8& 8& 2& 2 & 0 \\
\end{array}\right)\qquad A_8=\left(\begin{array}{ccccc}
     0& 0& 1& 1 & 7 \\
     0& 0& 1& 1 & 7 \\
     1& 1& 0& 7 & 2 \\
     1& 1& 7& 0 & 2 \\
     7& 7& 2& 2 & 0 \\
\end{array}\right) \qquad A_7\left(\begin{array}{ccccc}
     0& 0& 1& 1 & 6 \\
     0& 0& 1& 1 & 6 \\
     1& 1& 0& 6 & 2 \\
     1& 1& 6& 0 & 2 \\
     6& 6& 2& 2 & 0 \\
\end{array}\right)
\]
\[
A_6=\left(\begin{array}{cccccc}
     0& 0& 1& 1 & 1 & 5 \\
     0& 0& 1& 1 & 1 & 5 \\
     1& 1& 0& 5 & 5 & 5 \\
     1& 1& 5& 0 & 5 & 2 \\
     1& 1& 5& 5 & 0 & 2 \\
     5& 5& 5& 2 & 2 & 0 \\
\end{array}\right)\qquad A_5=\left(\begin{array}{cccccc}
     0& 0& 1& 1 & 1 & 4 \\
     0& 0& 1& 1 & 1 & 4 \\
     1& 1& 0& 4 & 4 & 2 \\
     1& 1& 4& 0 & 4 & 2 \\
     1& 1& 4& 4 & 0 & 3 \\
     4& 4& 2& 2 & 3 & 0 \\
\end{array}\right)
\]
By choosing the two non-adjacent vertices of each $G_k$ to be its terminals, one can verify the claim about the $G_k$. We have included a Sage code~\cite{sagemath} which verifies this in the \hyperlink{code:5to9}{Appendix}. 

To show the second part, we have to show that for $z_k=e^{\frac{2\pi i}{k}}\notin \mathcal{E}(G_k)$, i.e. $R(G_k,z_k)+S(G_k,z_k)\neq 0$. We know that $z_k^k-1=0$ and $z_k\neq 1$, therefore $R(G_k;z_k)\neq 0$. On the other hand,
\[
    |R(G_k,z_k)+S(G_k,z_k)|=|R(G_k,z_k)|\cdot |1+\hat y_{G_k}(z_k)| \ge |R(G_k,z_k)|\cdot (|\hat y_{G_k}(z_k)|-1) > 0,
\]
which proves that $z_k\notin\mathcal{E}(G_k)$ as we desired.











\end{proof}

We now prove Proposition~\ref{prop:zeros outside disk} from the introduction.
\begin{proof}[Proof of Proposition~\ref{prop:zeros outside disk}]
Let $p\in \mathbb{C}$ of norm $1$ such that $p^k\neq 1$ for $k\in \{1,\ldots,4\}$.
By the previous propositions we know $p\in \mathcal{A}$ and since this is a open set, there exists $\varepsilon>0$ such that $B(p,\varepsilon)\subset \mathcal{A}$.
The result now follows since $\overline{\mathcal{A}}=\overline{\mathcal{Z}}$, by Lemma~\ref{lem:closure of all activity and zeros}.
\end{proof}


\section{Density implies hardness}\label{sec:density implies hardness}
In this section we will prove Theorem~\ref{thm:main hard} following the proof of~\cite{main-approx}*{Theorem 3.12}.
We will start with a proof outline after which we will gather the ingredients as discussed in this outline.

To prove hardness of approximately computing $R(F;p)$ for (planar) graphs $F$ when $y\in \mathcal{D}_G$, we show a polynomial-time reduction from approximation to exact computation of $R(F;p)$, which is known to be \textsc{\#P}-hard by a result of Vertigan~\cite{vertigan-2005}.
An outline of the reduction algorithm for the case planar graphs is roughly as follows.

We assume that we have access to an oracle for \textsc{Approx-Abs-Planar-Rel$(p)$} or \textsc{Approx-Arg-Planar-Rel$(p)$}.
Rather than trying to compute $R(F;p)$ directly under this assumption, we first try to compute ratios of the form 
\begin{equation*}
\frac{R(F;p)}{R(F\setminus e;p)}
\end{equation*}
for an edge $e$ of $F$ and combine these in a telescoping fashion to compute $R(F;p)$ exactly.
We do this by viewing the ratio as the solution $x^*$ to a linear equation of the form $Ax-B=0$ with $B=R(F;p)$ and $A=R(F\setminus e;p)$ and $x=y_{G'}-(p+1)$ for some $G'\in \Hs_{G}$ such that $R(G';p)\neq 0$. 
Making use of Lemma~\ref{lem:con-del}, we can view $Ax+B$ as the reliability polynomial of $F(G')_e$ (the graph obtained by implementing $G'$ on the edge $e$ of $F$) and use the oracle to approximately determine the value of $Ax+B$.
By using different values of $x$ that we can achieve using the fact that $p$ is contained in the density locus of $\mathcal{H}_{G}$, in combination with a form of binary search we can then determine $x^*$ with very high precision.
This requires us to actually generate any given value $x$ with very high precision as $y_{G'}-(p+1)$ for some $G'\in \Hs_{G}$ such that $R(G';p)\neq 0$ in polynomial time.
Using the fact that algebraic numbers of bounded complexity form a discrete set (much like the rational numbers of bounded denominator) we can then determines the value $x^*$ exactly using an algorithm due to Kannan, Lenstra and Lov\'asz~\cite{kannan-1988}.

There is a mild caveat to the above approach. 
Namely, if both $R(F;p)=0$ and $R(F\setminus e;p)=0$, then $R(F(G')_e;p)=0$ and we cannot `trust' the oracle. 
However by doing the telescoping procedure with a bit more care, we can sidestep this issue and finally combine everything to compute $R(F;p)$ exactly.

In Section~\ref{sec:exponential density} we devise an algorithm to get arbitrarily close to any given point with an effective edge interaction.
In Section~\ref{sec:ratio computing} we show how to compute the ratios exactly when given an oracle for \textsc{Approx-Abs-Planar-Rel$(p)$} or \textsc{Approx-Arg-Planar-Rel$(p)$}. Finally, in Section~\ref{sec:telescoping} we complete the telescoping argument.

Before we get started we first recall some basic facts about algebraic numbers and recall the result of Vertigan~\cite{vertigan-2005} about the complexity of exactly computing the reliability polynomial.

\subsection{Representing algebraic numbers} 
We collect here some basic properties of algebraic numbers and how to represent them following~\cite{main-approx}.

By definition an \emph{algebraic number} is a complex number $\alpha$ that is a root of a polynomial with integer coefficients.
The minimal polynomial of an algebraic number $\alpha$ is the unique polynomial $q(x)\in\mathbb{Z}[x]$ of smallest degree such that $q(\alpha)=0$, whose coefficients have no common prime factors, and whose leading coefficient is positive.

In this paper we will represent an algebraic $\alpha$ number as a pair $(q,R)$ where $q\in\mathbb{Z}[x]$ is the minimal polynomial of $\alpha$ and $R$ is an open rectangle in the complex plane such that $\alpha$ is the only zero of $p$ in that rectangle. 
A typical implementation is a list of numbers representing the coefficients of the polynomial and a 4-tuple of rational numbers $(a,b,c,d)$ representing the rectangle $(a,b)\times(c,d)\subseteq\R^2\cong\C$. 
This representation is of course not unique, but one can decide in polynomial time whether two representations represent the same number by an algorithm due to Wilf~\cite{wilf-1978}.

We define the \emph{size} of a representation of an algebraic number $\alpha$ given as a pair $(q,R)$ as the number of bits required to represent the polynomial $q$ and the rectangle $R$. 
We can perform basic operations (addition, subtraction, multiplication, division and integer root) on the representations in time polynomial in the size of the representation; see~\cite{strzebonski-1997} and~\cite{main-approx}*{Section 2.3}.

We need a few more definitions. Let $q=\sum_{i=0}^d a_i x^i$ be a polynomial of degree $d$, with leading coefficient $a_d$ and roots $\alpha_1,\ldots,\alpha_d$.
\begin{itemize}
    \item We define the \emph{usual height} of $q$ as its maximum coefficient by absolute value and denote it by $H(q)$.
    \item We define the \emph{length} of $q$ as the sum of the absolute value of the coefficients and denote it by $L(q)$.
    \item  We define the \emph{absolute logarithmic height} of $q$ as
    $$h(q):=\frac{1}{d}\log\left(|a_d|\prod_{i=1}^d\max(1,|\alpha_i|)\right).$$
\end{itemize}
Similarly, we define the length, usual height and absolute logarithmic height of an algebraic number as the length, usual length and absolute logarithmic height of its minimal polynomial.

\subsection{Exact Computation}
We define the problem of exactly evaluating the reliability polynomial of a graph $G$ for a fixed algebraic number $p$:
\\ \quad \\
\begin{tabular}{rl}
    Name: & \textsc{Planar-Rel$(p)$} \\
    Input: & A planar graph $H$.\\
    Output: & A representation of the algebraic number $R(H;p)$.
\end{tabular}
\\ \quad \\
The problem \textsc{Rel$(p)$} is defined in the same way, except that the input can now be any graph $H$.

Let us recall the definition of the Tutte polynomial of a graph $G=(V,E)$, 
\[
T(G;x,y):=\sum_{A\subseteq E}(x-1)^{k(A)-k(E)}(y-1)^{|A|-|V|+k(A)}.
\] 
We note that $R(G;p)$ is equal to $T(G;1,1/p)$ up to a simple transformation if $G$ is connected, and is identically zero otherwise. 
Indeed, if $G$ is connected, we have
\[
\text{Rel}(p)=\left(\frac{1-p}{p}\right)^{|V|-1}p^{|E|}T(G;1,1/p).
\]
Since connectedness can be verified in polynomial time by Breadth First Search for example, \textsc{Planar-Rel$(p)$} reduces trivially to \textsc{Planar-Tutte$(1,1/p)$}, where for algebraic numbers $x,y$, \textsc{Planar-Tutte$(x,y)$} is defined as follows.
\\ \quad \\
\begin{tabular}{rl}
    Name: & \textsc{Planar-Tutte$(x,y)$} \\
    Input: & A planar graph $G$.\\
    Output: & A representation of the algebraic number $T(G;x,y)$.
\end{tabular}
\\ \quad \\
A result of Vertigan~\cite{vertigan-2005}*{Proposition 4.4 (iii)} saying that \textsc{Planar-Tutte$(x,y)$} is \textsc{\#P}-hard for most algebraic numbers $x,y$ directly implies the following:
\begin{theorem}[Vertigan, 2005]\label{thm:vertigan}
The problem \textsc{Planar-Rel$(p)$} is \textsc{\#P}-hard for any algebraic number $p\notin \{0,1\}$. 
\end{theorem}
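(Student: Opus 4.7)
The plan is to deduce the theorem from Vertigan's \#P-hardness result for the planar Tutte polynomial together with a straightforward polynomial-time Turing reduction from \textsc{Planar-Tutte}$(1,1/p)$ to \textsc{Planar-Rel}$(p)$. The direction of reduction is the opposite of the one already indicated in the excerpt (which used trivial connectivity testing to reduce Rel to Tutte), so the bulk of the work is verifying that Tutte at $(1,1/p)$ reduces back to Rel at $p$.

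First I would invoke Vertigan's Proposition 4.4(iii) from \cite{vertigan-2005}: for every algebraic pair $(x_0,y_0) \in \C^2$ lying outside a known finite collection of exceptional curves and points (the hyperbolas $(x-1)(y-1) \in \{1,2\}$ together with a short list of isolated easy points such as $(1,1)$, $(-1,-1)$, $(0,-1)$, $(-1,0)$, and the three pairs coming from primitive cube and fourth roots of unity), \textsc{Planar-Tutte}$(x_0,y_0)$ is \textsc{\#P}-hard. I would then check that the line $x=1$ meets this exceptional set only at $y=1$: the hyperbolas $(x-1)(y-1)=\alpha$ with $\alpha\neq 0$ do not touch $x=1$, and the only isolated easy point on this line is $(1,1)$. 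Consequently for every algebraic $p \notin \{0,1\}$ the evaluation $(x_0,y_0)=(1,1/p)$ lies in the hard region, so \textsc{Planar-Tutte}$(1,1/p)$ is \textsc{\#P}-hard.

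Next I would supply a polynomial-time Turing reduction from \textsc{Planar-Tutte}$(1,1/p)$ to \textsc{Planar-Rel}$(p)$. Given a planar input graph $G$, compute its connected components $G_1,\ldots,G_k$ in polynomial time (BFS/DFS). Since the Tutte polynomial is multiplicative over disjoint unions,
\[
T(G;1,1/p)=\prod_{i=1}^k T(G_i;1,1/p).
\]
For each connected component $G_i$, the identity recalled in the excerpt,
\[
R(G_i;p)=\Bigl(\tfrac{p-1}{p}\Bigr)^{1-|V(G_i)|}p^{|E(G_i)|}\,T(G_i;1,1/p),
\]
lets me recover $T(G_i;1,1/p)$ from a single oracle call to \textsc{Planar-Rel}$(p)$, multiplied by an explicit algebraic constant whose representation can be computed in polynomial time and which is nonzero because $p\notin\{0,1\}$. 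Taking the product over $i$ yields $T(G;1,1/p)$ exactly, completing the reduction.

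Chaining the two steps gives the theorem. The only point requiring genuine care is the first one, namely matching the specific form of Vertigan's exceptional set against the line $x=1$ to confirm that no algebraic $p\neq 0,1$ slips into the easy locus; every other ingredient is either elementary (multiplicativity over components, efficient arithmetic on algebraic numbers as discussed in the preceding subsection) or already cited.
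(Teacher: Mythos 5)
Your proposal is correct and follows essentially the same route as the paper: both deduce the theorem from Vertigan's \textsc{\#P}-hardness result for \textsc{Planar-Tutte} at off-exceptional algebraic points, via the explicit evaluation formula $R(G;p)=\bigl(\tfrac{p-1}{p}\bigr)^{1-|V|}p^{|E|}T(G;1,1/p)$ for connected $G$. However, you have correctly noticed and fixed something the paper glosses over. The paper only remarks that ``\textsc{Planar-Rel}$(p)$ reduces trivially to \textsc{Exact-Tutte}$(1,1/p)$'' and then asserts the theorem; as written, that is the wrong direction for transporting hardness from Tutte to Rel. Your reduction from \textsc{Planar-Tutte}$(1,1/p)$ to \textsc{Planar-Rel}$(p)$ --- decompose into connected components, use the multiplicativity of the Tutte polynomial over disjoint unions, and on each component invert the displayed identity using the nonzero constant $\bigl(\tfrac{p-1}{p}\bigr)^{1-|V|}p^{|E|}$ --- is what is actually needed, and your check that the line $x=1$ avoids Vertigan's exceptional hyperbolas $(x-1)(y-1)\in\{1,2\}$ and meets the finite list of easy points only at $(1,1)$ (excluded since $p\neq 1$) closes the argument. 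In short: same approach, but your write-up is more careful than the paper's and supplies the missing direction of the reduction.
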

This theorem directly implies that \textsc{Rel$(p)$} is \textsc{\#P}-hard for any algebraic number $p\notin \{0,1\}$, a result obtained earlier in~\cite{JVWhardJonesandTutte}.
\subsection{Exponential density}\label{sec:exponential density}
In this section we prove the following result, analogous to \cite{main-approx}*{Theorem 3.6}, which allows us to efficiently approximate arbitrary points of $\Q[i]$ with effective edge interactions.

For a graph $G=(V,E)$ we define its \emph{size} as $|G|:=|V|+|E|$.

\begin{theorem}\label{thm:constructing}
Let $G_0$ be a two-terminal graph and let $p\in \mathcal{D}_{G_0}$ (resp. $p\in \mathcal{D}^{\mathbb{R}}_{G_0}$) such that $R(G_0;p)\neq 0$ and $S(G_0;p)\neq 0$. 
    Then there exists an algorithm that on input of $y_0\in\Q[i]$ (resp. $y_0\in\Q$) and rational $\epsilon>0$ outputs a two-terminal graph $G\in\Hs_{G_0}$ and the value $R(G;p)$ satisfying $|y_G(p)-(p+1)-y_0|<\varepsilon$ and $R(G;p)\neq 0$. 
    Both the running time of the algorithm and the size of $G$ are $\poly(\size(y_0,\epsilon))$.
\end{theorem}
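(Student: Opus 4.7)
The plan is to mimic the proof of Proposition~\ref{prop:active vs density}: build graphs via parallel compositions of $G_0$ whose effective edge interactions $y_{G_n}(p)-1$ decay geometrically and rotate at a fixed rate, use three such vectors at a chosen scale as a positively spanning family in $\mathbb{C}$ (or two in the real case), and sum integer multiples of them via series composition to approximate $y_0$. The new ingredient is an effective, \emph{polynomial-size} construction. Done naively, Lemma~\ref{lem:4.2} applied at scale $\varepsilon$ would force integer multipliers up to $|y_0|/\varepsilon$, so series compositions would produce graphs of size $\mathrm{poly}(1/\varepsilon)$ instead of $\mathrm{poly}(\log(1/\varepsilon))$. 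We therefore work in a \emph{scale-adapted} (i.e.\ bit-by-bit) fashion, as in \cite{main-approx}*{Theorem 3.6}.

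\textbf{Base vectors.} Set $G_n = G_0^{\parallel(n+1)}$. By Lemma~\ref{lem:ei-comp} the effective edge interactions satisfy $y_{G_{n+1}} = g(y_{G_n})$, where $g$ is the Möbius transformation of~\eqref{eq:define mobius}. Since $p\in\mathcal{D}_{G_0}\subseteq\mathcal{A}_{G_0}$, Lemma~\ref{lem:mobius} gives that $1$ is an attracting fixed point of $g$ with multiplier $\lambda=1/\hat{y}_{G_0}(p)$, $|\lambda|<1$. Therefore $a_n:=y_{G_n}(p)-1$ satisfies $a_n\sim C\lambda^n$: $|a_n|$ decreases geometrically and consecutive arguments differ by $\arg\lambda$ (in the non-real case). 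Since $|G_n|=O(n)$, producing $G_n$ and computing $R(G_n;p), S(G_n;p)$ via Lemma~\ref{lem:recursion} takes polynomial time in $n$. By \eqref{eq:claim R=0} we have $R(G_n;p)\ne 0$ for all but at most one $n$, and we avoid that index. In the non-real case, since $\arg\lambda\not\equiv 0\pmod{2\pi}$, there exist fixed offsets $n_1,n_2$ such that for every sufficiently large $n$ the triple $(a_n, a_{n+n_1}, a_{n+n_2})$ positively spans $\mathbb{C}$ with bounded condition number; in the real case $\lambda\in(-1,0)$ and successive $a_n$'s have opposite signs, giving a two-vector positive spanning set of $\mathbb{R}$.

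\textbf{Scale-adapted refinement.} Given $\varepsilon>0$, let $L=\lceil\log_2(1/\varepsilon)\rceil$. For $i=0,1,\ldots,L+O(1)$ fix $n_i$ so that $|a_{n_i}|\asymp 2^{-i}$; note $n_i=O(i)$. We construct the approximation iteratively. Set the residual $r_0 := y_0+p$. At step $i$, with a residual $r_i\in\mathbb{C}$ of modulus $\le C\cdot 2^{-i}$ in hand, apply a quantitative form of Lemma~\ref{lem:4.2} (using the bounded condition number of the triple at scale $2^{-i-1}$) to find nonnegative integers $k_i,\ell_i,m_i$, each bounded by an absolute constant, such that
\[
\Delta_i := k_i\,a_{n_i}+\ell_i\,a_{n_i+n_1}+m_i\,a_{n_i+n_2}
\]
satisfies $|r_i-\Delta_i|\le C\cdot 2^{-i-1}$; then set $r_{i+1}:=r_i-\Delta_i$. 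After $L+O(1)$ iterations, $|r_{L+1}|<\varepsilon$. Because the multipliers are $O(1)$, each $\Delta_i$ is realized by the series composition of $O(1)$ copies of $G_{n_i},G_{n_i+n_1},G_{n_i+n_2}$, whose combined size is $O(i)$. The analogous construction over $\mathbb{R}$ works in the real case.

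\textbf{Assembly, reliability, and running time.} Let $G\in\Hs_{G_0}$ be the series composition of the graphs realizing $\Delta_0,\ldots,\Delta_{L+O(1)}$. By Lemma~\ref{lem:ei-comp},
\[
y_G(p)-1 \;=\; \sum_i \Delta_i \;\approx\; y_0+p,
\]
so $|y_G(p)-(p+1)-y_0|<\varepsilon$. By Lemma~\ref{lem:recursion} the reliability polynomial is multiplicative under series composition, so $R(G;p)=\prod_i R(G_{n_i};p)^{\cdots}\ne 0$; it is computed alongside the construction. The total size of $G$ is $\sum_{i=0}^{L+O(1)}O(i)=O(L^2)=\mathrm{poly}(\size(\varepsilon))$, and the arithmetic with algebraic $p$ at each step can be carried out in polynomial time using the representation of algebraic numbers from Section~\ref{sec:density implies hardness}. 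The main obstacle is the polynomial size bound: getting past the naive $\mathrm{poly}(1/\varepsilon)$ requires the recursive refinement so that at every scale the integer coefficients are $O(1)$, together with a careful choice of $n_i$ and of the spanning triple so that the spanning is quantitatively robust; both rely on the geometric decay of $|a_n|$ and the rigid rotation of $\arg(a_n)$ provided by the Möbius dynamics of Lemma~\ref{lem:mobius}.
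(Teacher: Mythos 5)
Your local construction is a genuinely different route from the paper's, but it has a gap: it cannot reach far-away targets.

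Your building blocks are $a_n=y_{G_0^{\parallel(n+1)}}(p)-1$, and you note (correctly) that $|a_n|$ decays geometrically towards $0$ with step rotation $\arg(1/\hat{y}_{G_0}(p))$. The largest of these, $|a_0|$, is a fixed constant depending only on $G_0$ and $p$. Your scale-adapted scheme starts from the residual $r_0 = y_0+p$ and only ever subtracts an $O(1)$ integer combination of $a$-vectors of modulus $\asymp 2^{-i}$ at step $i$; in particular the step-$0$ correction has modulus $O(1)$. This silently assumes $|y_0+p|=O(1)$. But the algorithm must handle arbitrary $y_0\in\Q[i]$ with running time and output size $\poly(\size(y_0,\epsilon))$, and $|y_0|$ can be as large as $2^{\Theta(\size(y_0))}$. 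With only bounded vectors and $O(1)$ multipliers per step, you need $\Theta(|y_0|)$ steps, i.e.\ a graph of exponential size. Series composition does not rescue this: $y_{G^{\bowtie m}}-1=m(y_G-1)$ gives a factor $m$ at the cost of a graph of size $\Theta(m)$, again exponential. So your scheme proves the statement only for targets in a fixed bounded region (roughly, the paper's $U=B(1,r)$), not for all of $\C$.

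The paper's proof handles exactly this issue in its Cases (2) and (3): for $\omega_0$ moderately far from $1$ it rescales by a constant number of series compositions, and for $\omega_0$ genuinely far away it uses the \emph{repelling} fixed point $p$ of the Möbius map $g$ of~\eqref{eq:define mobius}. Since $\bigcup_{i=1}^N g^{\circ i}(B(p,2))$ covers $\widehat{\C}\setminus U$ for some fixed $N$, one pulls $\omega_0$ back by $g^{-i}$ (for bounded $i$) into $B(p,2)$, approximates there, and pushes the approximation forward by $g^{\circ i}$ (realized as $\parallel G_0^{\parallel i}$ on the graph side), propagating the error through $g^{\circ i}$ with an explicit precision analysis linking the needed local accuracy $\varepsilon''$ to $\size(y_0,\varepsilon)$. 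This global use of the Möbius dynamics, and the accompanying error-propagation bookkeeping, is the ingredient your proposal is missing; without it the claimed $\poly(\size(y_0,\epsilon))$ bound does not hold. Your local "dyadic greedy" idea is essentially an inline re-derivation of the covering walk of Lemma~\ref{lem:4.5} specialized to translations (series composition), so once the expansion step is added the two approaches would coincide in effect, but as written the proposal does not establish the statement.
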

To prove the theorem we require a few preliminary results.

We call a M\"obius transformation $\Phi$ \emph{contracting} on a set $U\subseteq\C$ if there exists $\alpha\in (0,1)$ such that $|\Phi'(z)|<\alpha$ for all $z\in U$. 
We restate a slightly modified version of Lemma 2.8 from~\cite{Bezakovahardcore}. 
\begin{lemma} \label{lem:4.5}
    Let $m\in\Q[i]$, $r>0$ rational and $U=B(m,r)$. 
    Suppose that we have M\"obius transformations with algebraic coefficients $\Phi_i:\Cext\to\Cext$ for $i\in[\ell]$ satisfying the following:
    \begin{itemize}
        \item[(a)] for each $i\in[\ell]$, $\Phi_i$ is contracting on $B(m,3r)$,
        \item[(b)] $U\subseteq \bigcup_{i\in [\ell]}\Phi_i(U)$.
    \end{itemize}
    Then there is an algorithm which on input of algebraic numbers $s,t\in U$ (respectively the starting point and target) and rational $\epsilon>0$ outputs in $\poly(\size(s,t,\epsilon))$-time an algebraic number $x\in B(t,\epsilon)$ and a sequence $i_1,\ldots,i_k\in[\ell]$ such that
    \begin{itemize}
        \item[(i)] $k\in O(\log(\epsilon^{-1}))$
        \item[(ii)] $x=\Phi_{i_1}\circ\cdots\circ\Phi_{i_k}(s)$ and
        \item[(iii)] $\Phi_{i_j}\circ\cdots\circ\Phi_{i_k}(s)\in B(m,3r)$ for all $j\leq k$. 
    \end{itemize}
Moreover the same is true when $m\in \Q$, $U=(m-r,m+r)$ and the coefficients of the $\Phi_i$ are real (and algebraic).
\end{lemma}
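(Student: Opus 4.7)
The plan is to run a backward-then-forward construction. Working backwards from the target $t$, I greedily build a sequence $t_0 = t, t_1, \ldots, t_k \in U$ by the following rule: at step $j$, use the covering condition (b) to locate some $\sigma(j) \in [\ell]$ with $t_{j-1} \in \Phi_{\sigma(j)}(U)$, and set $t_j := \Phi_{\sigma(j)}^{-1}(t_{j-1})$, which by definition lies in $U$. Putting $i_j := \sigma(k+1-j)$ and $\Psi := \Phi_{i_k} \circ \cdots \circ \Phi_{i_1}$, the construction gives $\Psi(t_k) = t$ by telescoping. The output sequence is then $i_1, \ldots, i_k$, and property (iii) follows because each $t_j$ was verified to lie in $U$.

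The contraction condition (a) furnishes, via compactness on a closed sub-disk containing $s$ and the $t_j$, a uniform constant $\lambda < 1$ with $|\Phi_i'(z)| \le \lambda$ for all $z$ involved and all $i \in [\ell]$. Hence
\[
|\Psi(s) - t| \;=\; |\Psi(s) - \Psi(t_k)| \;\le\; \lambda^k \, |s - t_k| \;\le\; 2r\lambda^k,
\]
so that $k = O(\log \epsilon^{-1})$ suffices to drive the distance below $\epsilon$, proving (i) and the closeness of $x$ to $t$. An algebraic approximation $x' \in B(x,\epsilon) \cap \mathbb{Q}[i]$ is then obtained by evaluating $\Psi$ at $s$ in multi-precision arithmetic; since $s$ is algebraic and each $\Phi_i$ has algebraic coefficients, $x$ is algebraic and its bit-size can be controlled.

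For the implementation I would maintain rational (resp.\ Gaussian rational) approximations $\tilde t_j$ of the auxiliary points $t_j$ rather than the points themselves, refreshing the precision by a fixed linear amount at each step. At step $j$ the index $\sigma(j)$ is located by looping over the finitely many $i \in [\ell]$, computing $\Phi_i^{-1}(\tilde t_{j-1})$ to sufficient precision, and testing inclusion in the rectangle $U$; the covering condition (b) guarantees that some $\Phi_i(U)$ contains $t_{j-1}$ with a positive margin, so a valid index is certified once the precision exceeds that margin. The running time and output size are polynomial because $k$ is logarithmic in $\epsilon^{-1}$ and each step incurs only a constant overhead in bit-size.

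The main technical obstacle is the \emph{quantitative} error control: one must ensure that the compounded imprecision from using approximate $\tilde t_j$ does not push an inverse image out of $U$ nor destroy the final accuracy, while keeping the working precision polynomial in $\size(s,t,\epsilon)$. This is handled by observing that each $\Phi_i^{-1}$ has bounded bi-Lipschitz constants on a slightly enlarged compact neighbourhood of $U$ (where (a) still holds by continuity), so the error after $k$ steps is amplified by at most a constant to the $k$-th power, i.e.\ by a quantity polynomial in $\epsilon^{-1}$, adding only $O(k)$ extra bits of precision. The real case is identical with closed intervals replacing closed disks and $\mathbb{Q}$ replacing $\mathbb{Q}[i]$ throughout.
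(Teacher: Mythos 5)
The paper does not supply a proof for this lemma: it simply notes that the proof of Lemma~2.8 in the Bez\'akov\'a--Galanis--Goldberg--\v Stefankovi\v c paper \emph{Inapproximability of the independent set polynomial in the complex plane} gives the statement directly, as already observed in the cited Galanis et al.\ reference. So your sketch is an attempt to fill in a citation; the backward-greedy shape you adopt is indeed the standard one in that line of work. However, there is a real gap in the step where you verify condition~(iii).

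You conclude ``(iii) follows because each $t_j$ was verified to lie in $U$.'' But (iii) is a statement about the \emph{forward} iterates $\Phi_{i_j}\circ\cdots\circ\Phi_{i_1}(s)$ of the starting point $s$, whereas the points $t_j$ are the forward iterates of $t_k$ (equivalently, the backward orbit of $t$). These orbits coincide only if $s=t_k$, which there is no reason to expect. Without knowing that the orbit of $s$ stays in $U$, the Lipschitz estimate $|\Psi(s)-\Psi(t_k)|\le \lambda^k|s-t_k|$ also collapses, because each $\Phi_{i}$ is only known to contract \emph{on} $U$; once an iterate of $s$ escapes $U$, nothing forces $|\Phi_i'|<1$ there, and the geometric decay can fail. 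This is precisely the point the original argument handles by exploiting a positive margin: since in applications one actually has $\overline U\subseteq\bigcup_i\Phi_i(U)$ (see the proof of Lemma~\ref{lem:precomputation}), a Lebesgue number $\delta>0$ lets one choose each $\sigma(j)$ so that $B(t_{j-1},\delta)\subseteq\Phi_{\sigma(j)}(U)$, and then one must argue inductively that the forward iterate of $s$ stays within $\delta$ of the corresponding $t_{k-j}$, typically after an initial phase that makes $|s-t_k|<\delta$. Your write-up omits this, which is the heart of the matter.

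A secondary issue: you invoke ``via compactness on a closed sub-disk containing $s$ and the $t_j$'' a uniform $\lambda<1$. But the $t_j$ may accumulate on $\partial U$, so the smallest closed disk containing them need not sit inside $U$, and the hypothesis $|\Phi_i'(z)|<1$ on the open disk $U$ does not a priori give $\sup_{\overline U}|\Phi_i'|<1$. You should either add the hypothesis of uniform contraction (which holds in the intended application, where $\overline U$ is compactly contained in $\{|g'|<1\}$) or get the uniform constant from the $\delta$-margin argument above.
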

\begin{remark}\label{rem:proof of lemma is wrong}
We note that item (iii) in the lemma above is not part of the statement of~\cite{Bezakovahardcore}*{Lemma 2.8}. Additionally, we require the $\Phi_i$ to be contracting on $B(m,3r)$ rather than on $U=B(m,r)$.
The reason for this restatement is that we believe that the proof given in~\cite{Bezakovahardcore} is not correct, since it implicitly assumes that all values $\Phi_{i_j}\circ\cdots\circ\Phi_{i_k}(s)$ are contained in $U$ to be able to apply the bound on the derivative. This however does not have to be the case.
We note that our restatement can be applied in essentially the same way as it was originally applied in~\cite{Bezakovahardcore} and later in~\cite{GalanisetalboundeddegreeIsing} and~\cite{main-approx}. Therefore the error in the proof does not have any implications for the contents of these papers.
\end{remark}
\begin{proof}
We follow the ideas of the proof of~\cite{Bezakovahardcore}*{Lemma 2.8} and modify some of the steps appropriately.
Let $M\in (0,1)$ be an upper bound on $|\Phi'(z)|$ for all $i=1,\ldots, \ell$ and $z\in B(m,3r)$.
Set $t_0=t$, then recursively define $t_j$ for $j\geq 1$ by choosing an index $i_{j}$ such that $\Phi_{i_j}^{-1}(t_{j-1})\in U$ and letting $t_{j}=\Phi_{i_j}^{-1}(t_{j-1})$.
At the moment when $s\in B(t_k,\varepsilon/M^{-k})$ we stop.

We note that once $k$ is such that $\varepsilon M^{-k}\geq 2r$ we have $s\in B(t_k,\varepsilon/M^{-k})$, since both $t_k,s\in U=B(m,r)$. This implies part (i).

To see parts (ii) and (iii).
Let us denote $s_k=s$ and for $j=k-1, \ldots, 0$ let $s_{j}=\Phi_{i_{j+1}}(s_{j+1}).$
We claim that for all $j=0,\ldots,k$ we have $|s_j-t_j|\leq 2r$ implying $s_j\in B(m,3r)$ as well as $|s_j-t_j|\leq \varepsilon/M^{-j}$. Together this implies parts (ii) and (iii).
We prove the claim by descending induction. For $j=k$ the two claims are true by construction. Now assume $j<k$.
Since $s_{j+1},t_{j+1}$ are both contained in $B(m,3r)$, we have by the fundamental theorem of calculus
\[
|s_j-t_j|=|\Phi_{i_{j+1}}(s_{j+1})-\Phi_{i_{j+1}}(t_{j+1})|\leq \sup_{z\in B(m,3r)}|\Phi_{i_{j+1}}'(z)|\cdot |s_{j+1}-t_{j+1}|\leq M|s_{j+1}-t_{j+1}|.
\]
So by induction we have $|s_j-t_j|\leq \varepsilon M^{-j}$ and $|s_j-t_j|\leq M2r<2r$.
This finishes the proof.

A similar proof also works in the special case when all parameters are real.  
\end{proof}

The next lemma tell us how to construct the M\"obius transformations using effective edge interactions of two-terminal graphs that satisfy the hypothesis of Lemma~\ref{lem:4.5}.

Let us define for $p\in \mathbb{C}$ and a two-terminal graph $G_0$ the set
\begin{equation}
    \Hs_{G_0}^*(p):=\{G\in \Hs_{G_0}\mid R(G;p)\neq 0\}.
\end{equation}

\begin{lemma}\label{lem:precomputation}
Let $G_0$ be a two-terminal graph and let $p\in \mathcal{D}_{G_0}$ (resp. $p\in \mathcal{D}^{\mathbb{R}}_{G_0}$) such that $R(G_0;p)\neq 0$.
Then there exists $r>0$ and  graphs $G_1,\dots,G_\ell\in \Hs^*_{G_0}(p)$, such that the M\"obius transformations $\Phi_i$ for $i=1,\ldots \ell$ defined as $z\mapsto \frac{z\YGz-p}{\YGz+z-1-p}+y_{G_i}(p)-1$  satisfy the  hypothesis of Lemma~\ref{lem:4.5} for $U=B(1,r)$ (resp. $U=(m-r,m+r)$).
\end{lemma}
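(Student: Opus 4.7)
The plan is to verify conditions (a) and (b) of Lemma~\ref{lem:4.5} for $U=B(1,r)$ (resp.\ $U=(1-r,1+r)$ in the real case) and suitably chosen graphs $G_1,\dots,G_\ell\in\Hs_{G_0}^*(p)$. The key observation is that each $\Phi_i$ is a translate of the M\"obius transformation $g$ from~\eqref{eq:define mobius}, so $\Phi_i'(z)=g'(z)$ is independent of $i$.

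For condition (a), Lemma~\ref{lem:mobius} tells us that $g$ fixes $1$ with $g'(1)=1/\hat{y}_{G_0}(p)$, and since $p\in\mathcal{D}_{G_0}\subseteq\mathcal{A}_{G_0}$ (by Proposition~\ref{prop:active vs density}) we have $|\hat{y}_{G_0}(p)|>1$, hence $|g'(1)|<1$. By continuity I choose $r>0$ and $\lambda\in(|g'(1)|,1)$ so that $|g'(z)|\le\lambda$ on $\overline{U}$, which gives condition (a) for any subsequent choice of the $G_i$.

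For condition (b), since $g'(1)\neq 0$ the map $g$ is a local biholomorphism at $1$: by the open mapping theorem (or monotonicity in the real case, as $g'$ has constant nonzero sign on $\overline{U}$), after possibly shrinking $r$ there exists $\rho>0$ with $g(U)\supseteq B(1,\rho)$ (resp.\ $(1-\rho,1+\rho)$), whence $\Phi_i(U)\supseteq B(y_{G_i}(p),\rho)$. Because $p\in\mathcal{D}_{G_0}$ (resp.\ $\mathcal{D}^{\mathbb{R}}_{G_0}$), the set $\{y_G(p):G\in\Hs_{G_0}^*(p)\}$ is dense in $\mathbb{C}$ (resp.\ $\mathbb{R}$), so by compactness of $\overline{U}$ I can select finitely many graphs $G_1,\dots,G_\ell\in\Hs_{G_0}^*(p)$ whose effective edge interactions form a $\rho$-net of $\overline{U}$. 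Then $U\subseteq\bigcup_i B(y_{G_i}(p),\rho)\subseteq\bigcup_i\Phi_i(U)$, giving condition (b).

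The argument is essentially a standard attracting-fixed-point-plus-density argument, and I do not anticipate a serious obstacle. The step most worth remarking on is that the $\rho$-net must consist of graphs in $\Hs_{G_0}^*(p)$ rather than arbitrary elements of $\Hs_{G_0}$, but this is already built into the definition~\eqref{eq:density-locus} of the density locus, which explicitly restricts to graphs of nonzero reliability at $p$.
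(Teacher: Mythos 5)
Your proof is correct and follows essentially the same route the paper takes: fix $r$ so that $g$ (and hence every translate $\Phi_i$) contracts on $B(1,r)$, note that $g(U)$ contains a ball $B(1,\rho)$, and use density of the effective edge interactions together with compactness of $\overline{U}$ to extract a finite subcover. You were right to read the paper's ``$m$'' in the real case as $1$ (it is a typo for the fixed point), and your explicit appeal to the open mapping theorem merely spells out a step the paper leaves implicit.
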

\begin{proof}
Define the M\"obius transformation $g$ as in~\eqref{eq:define mobius}, that is, 
\begin{align*}
    g(z)&=f_p(f_p(z)f_p(\YGz))=\frac{z\YGz-p}{\YGz+z-1-p}.\\
\end{align*}
By Theorem~\ref{prop:active vs density} we have $p\in \mathcal{A}_{G_0}$ (resp. $p\in \mathcal{A}^{\mathbb{R}}_{G_0}$)  and thus by Lemma~\ref{lem:mobius} there exists $\alpha\in (0,1)$ such that $|g'(1)|<1-\alpha$. 

We now given the proof for the case that $p\notin\mathbb{R}$, remarking that it also applies to the real setting.

Consider the open set $U_1=\{z\in\C\mid |g'(z)|<1-\alpha/2\}$. 
Clearly, $1\in U_1$. 
Since $U_1$ is open there exists $r>0$ such that $B(1,3r)\subseteq U_1$. (We can easily determine such $r$ explicitly from the formula for $g'(z)$.)
Then for every $u\in B(1,3r)$ we have that $z\mapsto g(z)+u-1$ has the absolute value of its derivative bounded by $1-\alpha/2$ on $B(1,3r)$. 
Let $U=B(1,r)$.
Since $\{y_G(p)\mid G\in \Hs_{G_0}, R(G;p)\neq 0\}$ is dense in $\mathbb{C}$ the collection of open sets of the form $g(U)+y_{G}(p)-1$ with $G\in \Hs^*_{G_0}(p)$ covers the compact set $\overline{U}$.
Therefore there exists a finite set of graphs $G_i\in \Hs^*_{G_0}(p)$, $i=1,\ldots,t$ such the associated sets cover $\overline{U}$. 

Therefore, the corresponding set of contracting M\"obius transformations $\{\Phi_i(z)=g(z)+y_{G_i}(p)-1\}_{i\in [t]}$  satisfies the hypothesis of Lemma~\ref{lem:4.5}.
\end{proof}

We are now ready to prove Theorem~\ref{thm:constructing}
\begin{proof}[Proof of Theorem~\ref{thm:constructing}]\label{proof:constructing}
Let $p,G_0$ as in the theorem statement. 
We give a proof for the case $p\notin \mathbb{R}$, along the way remarking why it also applies to the case when $p\in \mathbb{R}$.

First let us collect quantities that are used in our algorithm, but don't depend on the input $\varepsilon>0$ and $y_0$.

By the previous lemma there exists $r>0$ and M\"obius transformations $\Phi_i$, $i=1,\ldots ,\ell$ of the form 
\begin{align*}
z\mapsto \frac{z\YGz-p}{\YGz+z-1-p}+y_{G_i}(p)-1=g(z)+y_{G_i}(p)-1
\end{align*}
with $G_i\in \Hs^*_{G_0}(p)$ that satisfy the hypothesis of Lemma~\ref{lem:4.5} with $U=B(1,r)$. 
Let us recall that $1$ is an attracting fixed point of the M\"obius transformation $g$ as defined in~\eqref{eq:define mobius}, while $p$ is the other fixed point, which is repelling, i.e. $|g'(p)|>1$.
Also, recall that if $z=y_H(p)$ for some graph $H$ then $g(z)=y_{H\parallel G_0}(p)$. 
Therefore 
\begin{equation}\label{eq:composition Phi}
\Phi_i(y_H(p))=y_{H\parallel G_0}(p)+y_{G_i}(p)-1=y_{(H\parallel G_0)\series G_i}(p).
\end{equation}


Let us fix $N\in \mathbb{N}$, such that $\bigcup_{i=1}^N g^{\circ i}(B(p,2))$ contains $\widehat{\mathbb{C}}\setminus U$.
The existence of $N$ is guaranteed by the fact that $p$ is a repelling fixed point of $g$, which implies that for any open set $O$ containing $p$ we have $=\widehat{\mathbb{C}}\setminus \{1\}\subseteq \bigcup_{i=1}^\infty g^{\circ i}(O)$.
This follows for example from the fact that if we conjugate $g$ with $h:z\mapsto \frac{z-1}{z-p}$ we have that $h(p)=\infty$ and $h\circ g\circ h^{-1}(z)=\alpha z$, where $\alpha=g'(1)$, which has magnitude less than 1 (see~\cite{beardon}, and also later in this proof for more details).
(In case $p\in \mathbb{R}$ we have that $g$ has real coefficients, and hence $\bigcup_{i=1}^N g^{\circ i}(B_\mathbb{R}(p,2))$ contains $\widehat{\mathbb{R}}\setminus U$, where $U=(1-r,1+r)$.) 


Next, we set $s\in U$ such that there is an $n_s\in\mathbb{N}$ such that
\[
s:=y_{G_0^{\parallel n_s}}(p)\in U.
\]
The existence of $n_s$ is guaranteed by the proof of Proposition~\ref{prop:active vs density}, which showed $y_{G_0^{\parallel n}}(p)\to 1$ as $n\to\infty$. Note that the proof of Proposition~\ref{prop:active vs density} also implies that we may assume $R(G_0^{\parallel n_s};p)\neq 0$.

Now we are ready to describe the desired algorithm. Let $\omega_0=y_0+(1+p)$. Our goal is to find an effective edge interaction $\omega_1=y_G(p)$ such that $|\omega_1-\omega_0|\leq \varepsilon$ in polynomial time of the input.
For the algorithm we consider three different cases: (1) $ \omega_0\in U$, (2) $\omega_0\in B(p,2)\setminus U$ and (3) $\omega_0\in \mathbb{C}\setminus (U\cup B(p,2))$. 

\begin{enumerate}[label=Case (\arabic*)]
\item In this case we can run the algorithm from Lemma~\ref{lem:4.5} with $s$ as defined above, $t=\omega_0$ and accuracy $\varepsilon$ to obtain a sequence $i_1,\ldots i_k\in [\ell]$ such that with $x_j:=\Phi_{i_j}\circ \ldots\circ \Phi_{i_1}(s)$ we have $x_k\in B(\omega_0,\epsilon)$.
Here we may assume that no $x_j$ is equal to $y_{G_{i_j}}(p)$. 
Otherwise we let $j$ be the last index such that $x_j=y_{G_{i_j}}(p)$ 
replace $s$ by $y_{G_{i_j}}(p)$ and shorten the sequence by letting it start at $j+1$.

Then by~\eqref{eq:composition Phi} $\omega_1:=x_k$ is the effective edge interaction of a two-terminal graph $G\in \Hs_{G_0}$, and satisfies $|\omega_1-\omega_0|\leq \varepsilon$.
More precisely, letting $H_0=G_0^{\parallel n_s}$ and $H_j=(H_{j-1}\parallel G_0)\series G_{i_{j-1}}$, we have $G=H_k$.
We can also compute its reliability polynomial, $R(G;p)$, using Lemma~\ref{lem:recursion} in time $O(k)= \poly(\size(s,t,\epsilon))= \poly(\size(y_0,\epsilon))$. Additionally, $|G|=O(k)=O(\log(\epsilon^{-1}))$  $=\poly(\size(y_0,\varepsilon))$.

It remains to show that $R(G;p)\neq 0$. We in fact claim that $R(H_j;p)\neq 0$ for all $j=0,\ldots,k$.
We prove this by induction, the base being covered since either $H_0=G_0^{\parallel n_s}$, or $H_0=G_{i_\ell}$ for some $\ell$ and in either case we have $R(H_0;p)\neq 0$.
Now suppose that $R(H_j)\neq 0$ for some $j<k$ and assume towards contradiction that $R(H_{j+1};p)=0.$
We have $H_{j+1}=(H_j\parallel G_0)\series G_{i_j}$ and thus by Lemma~\ref{lem:ei-comp} we have $0=R(H_{j+1};p)=R(H_j\parallel G_0;p)R(G_{i_{j+1}};p)$ and hence $R(H_j\parallel G_0;p)=0$.
Since $y_{H_{j+1}}=y_{H_j\parallel G_0}+y_{G_0}-1\neq \infty$, it follows that $S(H_j\parallel G_0;p)=0$.
This implies $S(H_j;p)=0$, but since $R(H_j;p)\neq 0$ it then follows that $x_j=y_{H_j}(p)=1$ contradicting our assumption that no $x_j=1$. 
Indeed if, $x_j=1$, then $x_{j+1}=\Phi_{i_{j+1}}(1)=y_{G_{i_{j+1}}}(p)$ and we assumed to no such index $j+1$ exists.
This shows that $R(H_j;p)\neq 0$ for all $j$.

\item  Take $n=\left\lceil\tfrac{|\omega_0-1|}{r}\right\rceil\in\N$ and note that $n\leq \tfrac{|p|+3}{r}+1$ and hence $n$ is constant in terms of the input. 
Then $u=\tfrac{\omega_0-1}{n}+1\in B(1,r)$. (Also, note that $u\in\mathbb{R}$ in case $p\in 
\mathbb{R}$.)
We then run the algorithm from Case (1) with $s$ as defined above, $t=u$ and accuracy $\varepsilon/n$.
The output $x$ is the effective edge interaction of some $H\in \Hs^*_{G_0}(p)$ and satisfies $|u-x|<\varepsilon/n$. 
The running time is bounded by $\poly(\size(y_0,\epsilon/n))=\poly(\size(y_0,\epsilon))$ since $n$ is constant.

By Lemma~\ref{lem:eei}, $\omega_1:=nx-n+1$ is the effective edge interaction of $H^{\series_n}$ and satisfies
\[
|\omega_1-\omega_0|=|nx-n+1-(nu-n+1)|=n|x-u|<\epsilon.
\]
We output the graph $G=H^{\series_n}$ and the value $R(H^{\series_n};p)=R(H;p)^n\neq 0$ by Lemma~\ref{lem:recursion}. 
Additionally since $n=O(1)$ we have
\begin{equation*}
    |H^{\series_n}|<n|H|=\poly(\size(y_0,\epsilon)).
\end{equation*} 
The running time is also bounded by $\poly(\size(y_0,\epsilon))$. 

\item  
We may assume that $\varepsilon<r/2$ and hence $B(\omega_0,\varepsilon)$ does not contain $1$.

By our pre-computation we know there exists $i\in \{1,\ldots, N\}$ and $x_0\in B(p,2)$ such that $g^{\circ i}(x_0)=\omega_0$. We can compute it by computing the inverse $g^{-1}$ and determine which value $(g^{-1})^{\circ i}(\omega_0)$ lies in $B(p,2)$. This takes only polynomial time in terms of $\size(\omega_0)=O(\size(y_0))$, since $N$ is constant.
The idea is now to get an effective interaction $x_1$ that is close to $x_0$ using Case (2) and then apply $g^{\circ i}$ to it to obtain $\omega_1\in B(\omega_0,\varepsilon)$.
To make this precise, we first need to find out how close exactly we need to get to $x_0$.

Recall that $h$ is the M\"obius transformation defined by $z\mapsto \frac{z-1}{z-p}$, which sends $1$ to $0$ and $p$ to $\infty$.
Then $\hat{g}:=h\circ g\circ h^{-1}$ is given by $z\mapsto \alpha z$ with $\alpha=g'(1)$.
Note that $h^{-1}(z)=\frac{pz-1}{z-1}$.

Let us denote $z'=h(z)$ for $z\in \widehat{\C}$. 
In these new coordinates it is easy to see that if $x_1'$ is such that $|x_1'-x_0'|\leq \eta$ for some $\eta>0$, then with $\omega_1'=\hat{g}^{\circ i}(x_1')$ we have $|\omega_0'-\omega_1'|\leq |\alpha|^{i}\eta\leq \eta$. 
To transfer this to the original coordinates we need to quantify what happens under the maps $h$ and $h^{-1}$.

Starting with $\omega_0-\omega_1$, we have by definition,
\begin{align*}
\omega_0-\omega_1=h^{-1}(\omega_0')-h^{-1}(\omega_1')=\frac{(\omega_0'-\omega_1')(1-p)}{(\omega_0'-1)(\omega_1'-1)}.    
\end{align*}
Therefore, if 
\begin{align}\label{eq:minimum}
|\omega_0'-\omega_1'|\leq\min\left\{\frac{\varepsilon|\omega_1'-1|^2}{2|p-1|}, |\omega_0'-1|\right\},    
\end{align} 
it follows that $|\omega_0-\omega_1|\leq \varepsilon$ (here we use $|\omega_1'-1|\leq |\omega_1'-\omega_0'|+|\omega_0'-1|$).

Let us next denote $\varepsilon'$ for the minimum in~\eqref{eq:minimum}.
It thus suffices to have $|x_0'-x_1'|\leq\varepsilon'$.
We have
\begin{align*}
x_0'-x_1'=h(x_0)-h(x_1)=\frac{(x_0-x_1)(p-1)}{(x_0-p)(x_1-p)}.  
\end{align*}
Therefore, if 
\begin{align*}
|x_0-x_1|\leq\min\left\{\frac{\varepsilon'|x_0-p|^2}{2|p-1|}, |x_0-p|\right\},
\end{align*} 
it follows that $|x_0'-x_1'|\leq \varepsilon'$. 

To summarize, if $|x_1-x_0|$ is smaller than
\begin{align*}
\varepsilon'':=\min\left\{ \frac{\varepsilon |\omega_0'-1|^2|x_0-p|^2}{4|p-1|^2}, \frac{\varepsilon |\omega_0'-1|^2|x_0-p|}{2|p-1|}, \frac{\varepsilon |x_0-p|\cdot |\omega_0'-1|}{2|p-1|}, {|\omega_0'-1||x_0-p|} \right\},
\end{align*}
then $|\omega_0-\omega_1|\le \varepsilon$.

Next we claim that
\begin{equation}\label{eq:size claim}
    \size(\varepsilon'')=O(\size(\varepsilon,y_0)).
\end{equation}
To see this first note that $x_0=(g^{-1})^{\circ i}(y_0)$ and $\omega_0'=h(\omega_0)$, where $h$ and $(g^{-1})^{\circ i}$ are both M\"obius transformations with constant coefficients (i.e. not depending on $y_0$ nor $\varepsilon$).
Therefore there are constant algebraic numbers $a,a',b,b',c,c',d,d'$ such that
\begin{equation*}
 x_0-p=\frac{a\omega_0+b}{c\omega_0+d} \quad \text{and}    \quad \omega_0'-1=\frac{a'\omega_0+b'}{c'\omega_0+d'}.
\end{equation*}
It follows from standard facts about algebraic numbers (cf.~\cites{Waldschmidtbook} and more specifically~\cite{main-approx}*{Lemma 3.7}) that the absolute logarithmic heights of $x_0-p$ and $\omega_0'-1$ are bounded by $O(\size(\omega_0))=O(\size(y_0))$.
Since $x_0\neq p$ and $\omega_0'\neq 1$ it follows from~\cite{main-approx}*{Lemma 3.7} that
\begin{equation*}
\log(|x_0-p|)=O(\size(y_0)) \quad \text{and}\quad \log(|\omega_0'-1|)=O(\size(y_0)).
\end{equation*}
This implies that $\size(\varepsilon'')=O(\size(\varepsilon,y_0))$.

By the algorithm guaranteed from Case (2) we now find $x_1$ as the effective edge interaction of some two-terminal graph $H\in \Hs^*_{G_0}(p)$ such that $|x_1-x_0|\leq \varepsilon''$. 
Since $\size(x_0)=O(\size(y_0))$ the size of $H$ and the running time of the algorithm are both bounded by $\poly(\size(y_0,\varepsilon''))$.
By applying $g^{\circ i}$ to $x_1$ we find $\omega_1$ such that $|\omega_0-\omega_1|\leq \varepsilon$ and such that $\omega_1$ is the effective edge interaction of $G:=H\parallel G_0^{ \parallel i}$.
We can compute $R(G;p)$ using Lemma~\ref{lem:recursion} in $\poly(\size(y_0,\varepsilon''))$ time having access to $R(H;p)$ from the output of Case (2).
Since $\size(\varepsilon'')=O(\size(\varepsilon,y_0))$.
It follows that both the size of $H\parallel G_0^{\parallel i}$ and the running time of the algorithm are bounded by $\poly(\size(y_0,\varepsilon))$, as desired.

It remains to argue that $R(G;p)\neq 0$. Suppose towards contradiction that $R(G;p)=0$.
Then, since $\omega_1=y_G(p)\neq \infty$, 
it must be that $S(G;p)=0$.
From this it follows by Lemma~\ref{lem:recursion} that $S(H;p)=0$ since $S(G_0;p)\neq 0$ by assumption.
This implies that $x_1=y_H(p)=1$, as $R(H;p)\neq 0$ since $H\in \Hs^*_{G_0}(p)$.
But then $\omega_1=g^{\circ i}(1)=1$, contradicting our assumption that $1\notin B(\omega_0,\varepsilon)$.
This finishes the proof of Case (3).
\end{enumerate}
Since all three cases have been covered, this finishes the proof.
\end{proof}

\subsection{Computing ratios}\label{sec:ratio computing}
Here we give an algorithm to (essentially) compute the ratio $r=\frac{R(F;p)}{R(F\setminus e;p)}$ as the (shifted) root $y^*$ of the equation $R(F;p)+(y-(1+p))R(F\setminus e;p)=0$. 
We start with a lemma that says that given an oracle for \textsc{$p$-Abs-Rel} (resp. \textsc{$p$-Arg-Rel} we can approximate $R(F;p)+(y-(p+1))R(F-e;p)$.
\begin{lemma}\label{lem:step 1 in the reduction}
Let $p\neq 1$ be an algebraic number.
Suppose there exists an algorithm that on input of a (planar) graph $H$ computes a $0.25$-abs approximation (resp. $0.25$-arg approximation) to $R(H;P)$ in time polynomial in $|H|$.

Then there exists an algorithm that on input of a (planar) graph $H$ and an edge $e$ of $H$, a two terminal graph $G\in \Hs^*_{G_0}(p)$, and the number $R(G;p)$ that computes a $0.25$-abs approximation (resp. $0.25$-arg approximation) to $R(H;p)+(y_G-(p+1))R(H\setminus e;)$ in time polynomial in $(|H|+|G|)$. In the planar setting we assume $G_0$ is planar with its terminals on the same face.
\end{lemma}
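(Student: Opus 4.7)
The plan is to reduce this to a single oracle call via the identity in Lemma~\ref{lem:eei}. That lemma tells us
\[
\frac{1-p}{R(G;p)}\,R(H(G)_e;p) \;=\; R(H;p) + (y_G(p)-(p+1))\,R(H\setminus e;p),
\]
so it suffices to approximate $R(H(G)_e;p)$ via the oracle and multiply by the constant $c:=\frac{1-p}{R(G;p)}$. The denominator $R(G;p)$ is nonzero because $G\in\Hs^*_{G_0}$, and its value is part of the input, so $c$ is an explicitly given algebraic number.

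The first step is to construct $H(G)_e$ by deleting $e$ and gluing in a copy of $G$, identifying the endpoints of $e$ with the source and sink of $G$; this has size $|H|+|G|-1$ and is built in linear time. In the planar setting we need $H(G)_e$ to be planar. For this I would first argue by induction on the series/parallel construction that every $G\in\Hs_{G_0}$ is planar with its two terminals on the same face, using the fact that $G_0$ has this property and that both series and parallel composition preserve planarity with terminals on a common face (the terminals lie on the outer face of each factor, and one can glue along the outer faces). Consequently, substituting such a $G$ on an edge $e$ of a planar graph $H$ (with both endpoints of $e$ lying on a common face) preserves planarity.

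Next, call the oracle on $H(G)_e$. In the abs-case we obtain $q\in\Q$ with $e^{-0.25}\le q/|R(H(G)_e;p)|\le e^{0.25}$. Since $c$ is a fixed algebraic number given by the input, we can compute a rational $c'$ with $|c'|/|c|$ arbitrarily close to $1$ in time polynomial in the sizes of $p$ and $R(G;p)$; multiplying, $q':=|c'|\cdot q$ is a $0.25$-abs-approximation of $c\cdot R(H(G)_e;p)$, which by Lemma~\ref{lem:eei} is exactly $R(H;p)+(y_G-(p+1))R(H\setminus e;p)$. In the arg-case the oracle returns $\xi\in\Q$ within $0.25$ of $\arg(R(H(G)_e;p))$; we compute a sufficiently close rational approximation $a$ of $\arg(c)$ (again feasible in polynomial time from the representation of $c$), and output $\xi+a$, which lies within $0.25$ of $\arg(c\cdot R(H(G)_e;p))$ modulo $2\pi$.

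The only mildly subtle point is the precision management when turning $c$ into a usable rational: one must quantify how tightly to approximate $|c|$ (respectively $\arg(c)$) so that the slack between $0.25$ and the oracle's guarantee absorbs the roundoff. This is routine once one notes that a single oracle call does not have to achieve the exact $0.25$ bound — we just need to fix a margin (say $0.2$ for the oracle tolerance after rescaling, which we can achieve because the oracle already meets $0.25$ on the underlying quantity and $c$ is fixed) — and then pick the rational approximation of $c$ accurate enough to cover the gap. Total running time is $\poly(|H|+|G|+\size(p)+\size(R(G;p)))$, as required.
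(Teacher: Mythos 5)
Your proof follows the paper's argument exactly: use Lemma~\ref{lem:eei} to express the target quantity as $\frac{1-p}{R(G;p)}R(H(G)_e;p)$, call the oracle on $H(G)_e$, and multiply by the constant $c=\frac{1-p}{R(G;p)}$; the planarity-preservation-by-induction step is likewise what the paper invokes (and your sketch of it is correct). One remark: you flag, more explicitly than the paper, that multiplying a rational oracle output by the irrational $|c|$ does not directly yield a rational $0.25$-abs-approximation, but your suggested resolution --- ``fix a margin of $0.2$ for the oracle tolerance, which we can achieve because the oracle already meets $0.25$'' --- is not available as stated: the hypothesis only guarantees the oracle is correct to $0.25$, so in the worst case there is zero slack after rescaling by an inexactly computed $c'$. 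The issue is real (the paper simply says ``multiply by the exact quantity'' without addressing that the result must lie in $\Q$), but the correct patch is either to allow algebraic outputs of the intermediate computation (adjusting the statement of the box-shrinking Theorem~\ref{thm:box-shrinking} accordingly, which tolerates any fixed multiplicative constant $<1/3$ or so) or to carry a tighter constant than $0.25$ through the chain; it cannot be salvaged by pretending the single oracle call has headroom it is not guaranteed to have.
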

\begin{proof}
By assumption $R(G;p)\neq 0$ and therefore by Lemma~\ref{lem:eei} we have 
\begin{align}
\frac{1-p}{R(G;p)}R(H(G)_e;p)=R(H;p)+\left(y_G(p)-(p+1)\right)R(H\setminus e;p).\label{eq:linear target}
\end{align}
We can thus use the assumed algorithm to compute a $0.25$-abs approximation (resp. $0.25$-arg approximation) to $R(H(G)_e;p)$ in time polynomial in $|H|+|G|$, from which we derive the required approximation to $R(H;p)+(y_G-(p+1))R(H\setminus e;p)$ by multiplying the result by the exact quantity $\frac{1-p}{R(G;p)}$ (since by assumption $R(G;p)\neq 0$).
Note that in the planar setting we have that $G$ is planar with its two terminals on the same face as follows by an easy induction. Therefore $H(G)_e$ is planar and we can thus indeed use the assumed algorithm.
\end{proof}

An important ingredient is the \emph{box shrinking} procedure from~\cite{main-approx} captured as Theorem 3.4 in there. Technically, as stated in~\cite{main-approx} it cannot be used and we therefore slightly adjust the statement below, noting that the proof of the theorem given in~\cite{main-approx} actually gives the statement below. 

\begin{theorem}\cite{main-approx}*{Theorem 3.4}\label{thm:box-shrinking}
    Let $A, B$ be complex numbers and let $C > 1$ be a rational number, such that $|A|$ and $|B|$ are both at most $C$, and both are either $0$ or at least $1/C$. Assume one of the following:
    \begin{itemize}
        \item there exists a $\poly(\log C,\size(y_0,\epsilon))$-time algorithm to compute, on input of $y_0\in\Q[i]$ and a rational number $\epsilon>0$, an $0.25$-abs-approximation of $A\hat{y}+ B$ for some algebraic number $\hat{y}\in B(y_0,\epsilon)$, or,
        \item there exists a $\poly(\log C,\size(y_0,\epsilon))$-time algorithm to compute, on input of $y_0\in\Q[i]$ and a rational number $\epsilon>0$, an $0.25$-arg-approximation of $A\hat{y}+ B$ for some algebraic number $\hat{y}\in B(y_0,\epsilon)$.
    \end{itemize}
Then, there exists an algorithm that, on input of a rational $\delta>0$ and $C > 0$ as above, outputs \airquotes{$A=0$} when $A = 0$ and $B\neq 0$, and that outputs \airquotes{$A\neq 0$} and a number $y\in\Q[i]$, such that $-B/A\in B_\infty(y, \delta/2)$ when $A\neq 0$. When $A = B = 0$ it is allowed to output anything. The running time and the size of $y$ are both $\poly(\log(C/\delta))$.
\end{theorem}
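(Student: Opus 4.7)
My plan is to use a binary-search / box-shrinking procedure that iteratively localises the root $y^\star := -B/A$ (defined when $A\neq 0$) inside a nested sequence of axis-aligned squares, with a short pre-check that distinguishes $A=0$ from $A\neq 0$. The first observation is that when $A\neq 0$ we have $|y^\star| = |B|/|A| \leq C^2$, so the initial square $R_0 := B_\infty(0,C^2)$ contains $y^\star$. I would then maintain the invariant that the current square $R_k$ contains $y^\star$, and at each step subdivide $R_k$ into an $M\times M$ grid of sub-squares with centres $c_{i,j}$, for a fixed constant $M$ chosen in advance. For each grid centre I would query the oracle at $y_0 = c_{i,j}$ with tolerance $\epsilon$ small enough that the perturbation $|A(\hat y - c_{i,j})| \leq C\epsilon$ is negligible at the scale of interest; taking $\epsilon = \Theta(\delta/C^2)$ suffices and gives $\size(\epsilon) = \poly(\log(C/\delta))$.

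In the abs-oracle case the returned number approximates $|A|\cdot|c_{i,j}-y^\star|$ up to a factor $e^{0.25}$, so the grid centre with the smallest returned magnitude must lie in (or adjacent to) the sub-square containing $y^\star$ provided $M$ is large enough to dominate the $e^{0.25}$ slack; I would set $R_{k+1}$ to a square of a fixed fraction of the side of $R_k$ centred at this grid point. For the arg-oracle I would exploit the identity
\[
\arg(Ac_{i,j}+B) - \arg(Ac_{i',j'}+B) \;\equiv\; \arg\!\left(\frac{c_{i,j}-y^\star}{c_{i',j'}-y^\star}\right) \pmod{2\pi},
\]
so that pairwise differences of oracle outputs estimate, up to error $0.5$, the angle that $y^\star$ subtends from any two grid centres; a constant number of such comparisons localises $y^\star$ to a sub-region of $R_k$ of linear size a fixed fraction of that of $R_k$.

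Before entering the loop I would separate the case $A=0$ from $A\neq 0$ by querying at two points $y_0, y_1$ with $|y_0 - y_1| = \Theta(C^2)$: if $A=0$ and $B\neq 0$ both outputs approximate $B$ and agree to within the $0.25$ tolerance, whereas if $A\neq 0$ then $|A(y_0-y_1)| \geq \Theta(C) \gg |B|$, forcing a large discrepancy in magnitude (respectively, in argument). The case $A=B=0$ is unconstrained by the statement. After $O(\log(C/\delta))$ iterations the side length of $R_k$ is below $\delta$ and I would output its centre.

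The main obstacle will be to pin down the absolute constants $M$ and the shrinkage factor so that the combined slack from the $e^{0.25}$ (resp.\ $\pm 0.25$) oracle tolerance and the $O(C\epsilon)$ perturbation from $\hat y \neq y_0$ still leaves a definite margin to identify the correct sub-square at every iteration and to reliably detect $A=0$. This is precisely the calculation carried out in~\cite{main-approx}*{Theorem 3.4}, and once these constants are fixed the running time bound follows because each iteration makes only a constant number of oracle calls on inputs of bit-size $\poly(\log(C/\delta))$.
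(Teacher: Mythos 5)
The paper does not give its own proof of this statement. It is quoted verbatim from \cite{main-approx}*{Theorem~3.4}, and the only discussion offered in the text is the remark that the statement as written in \cite{main-approx} \emph{``cannot be used''} verbatim and is therefore slightly adjusted, with the authors asserting that the proof in \cite{main-approx} already establishes the adjusted form. So there is no proof in this paper for your sketch to be compared against; the substance of the argument lives entirely in the cited reference.

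That said, your box-shrinking outline matches the spirit of the construction in \cite{main-approx}: start from an axis-aligned square containing $-B/A$ (which, when $A\neq 0$, has modulus at most $|B|/|A|\le C^2$), subdivide into a fixed grid, use the oracle at grid centres with tolerance $\epsilon\approx\delta/C^2$ to absorb the $\hat y\in B(y_0,\epsilon)$ perturbation, and shrink by a constant factor at each step, terminating after $O(\log(C/\delta))$ rounds. Two small cautions on the parts you sketched concretely. First, your pre-check for $A=0$ versus $A\neq 0$ with just two far-apart queries is not quite sufficient: for $A\neq 0$ the quantities $Ay_0+B$ and $Ay_1+B$ can have equal modulus (when $-B/A$ lies on the perpendicular bisector of $y_0,y_1$) and also equal argument modulo $\pi$, so two queries cannot robustly separate the cases; one needs at least three suitably placed query points (or a different test). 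Second, the choice $|y_0-y_1|=\Theta(C^2)$ should have its implicit constant taken large enough that $|A|\,|y_0-y_1|\ge C^{-1}\cdot|y_0-y_1|$ strictly dominates $|B|\le C$; as written it is only of the same order as $|B|$. Both are constants-level fixes of the kind you explicitly deferred, so they do not affect the high-level correctness of the approach, but since the paper itself gives no proof, any reader would have to supply these details or read \cite{main-approx} directly.
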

It is useful to think of $A=R(H\setminus e;p)$ and $B=R(H;p)$ in the above theorem.
To use this theorem above we will need some (height) bounds on the reliability polynomial.
\begin{lemma}\label{lem:height bounds}
    Let $G=(V,E)$ be a graph $m$ edges with edge $e$ and let $p$ be an algebraic number of degree $d=d(p)$ and absolute logarithmic height $h=h(p)$. Then
    \begin{itemize}
        \item[(a)] If $R(G;p)\neq 0$, then \[|\log (|R(G;p)|)|\leq dm\left((\log(4)+h(p)\right)).\]
        \item[(b)] If $R(G\setminus e;p)\neq 0$, then \[\log(H\left(\frac{R(G;p)}{R(G\setminus e;p)}\right)\leq 2dm(\log(4)+h(p)).
        \]
    \end{itemize}
\end{lemma}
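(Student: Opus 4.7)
The plan is to view $R(G;x)$ as a polynomial in $\mathbb{Z}[x]$ of degree at most $m$, estimate its length, and then apply standard height inequalities from the theory of algebraic numbers. Write
\[
R(G;x)=\sum_{\substack{A\subseteq E\\(V,A)\text{ connected}}}(1-x)^{|A|}x^{m-|A|}\in\mathbb{Z}[x],
\]
and expand each summand by the binomial theorem. Since $(1-x)^{|A|}x^{m-|A|}$ has length $2^{|A|}$, the triangle inequality gives
\[
L(R(G;\cdot))\leq \sum_{A\subseteq E}2^{|A|}=3^m\leq 4^m,
\]
and the analogous bound holds for $R(G\setminus e;\cdot)$ with $m$ replaced by $m-1$.

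For the upper bound in (a) I would use $|R(G;p)|\leq L(R(G;\cdot))\max(1,|p|)^m\leq 4^m\max(1,|p|)^m$. The inequality $\max(1,|p|)\leq e^{d\,h(p)}$ is immediate from the definition of the absolute logarithmic height (since $|a_d|\prod_i\max(1,|p_i|)=e^{d\,h(p)}\geq\max(1,|p|)$), and so $\log|R(G;p)|\leq m\log 4+dm\,h(p)\leq dm(\log 4+h(p))$.

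For the matching lower bound in (a), I would invoke two standard facts. First, for any $q\in\mathbb{Z}[x]$ of degree $n$ and any algebraic $\alpha$, one has $h(q(\alpha))\leq n\,h(\alpha)+\log L(q)$; applied to $R(G;\cdot)$ and $p$, this gives $h(R(G;p))\leq m(\log 4+h(p))$. Second, any nonzero algebraic number $\beta$ of degree $d(\beta)$ satisfies the Liouville-type bound $|\beta|\geq e^{-d(\beta)h(\beta)}$, which follows from $|a_{d(\beta)}\prod_i\beta_i|\geq 1$ for the minimal polynomial of $\beta$. Since $R(G;p)$ has degree at most $d$, combining these gives $|R(G;p)|\geq e^{-dm(\log 4+h(p))}$, and together with the upper bound this yields (a).

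Part (b) follows from the same ingredients. Set $\beta=R(G;p)/R(G\setminus e;p)$; then $\deg\beta\leq d$, and the submultiplicativity of heights gives
\[
h(\beta)\leq h(R(G;p))+h(R(G\setminus e;p))\leq m(\log 4+h(p))+(m-1)(\log 4+h(p))=(2m-1)(\log 4+h(p)).
\]
The standard conversion $\log H(\beta)\leq d(\beta)(\log 2+h(\beta))$ (obtained from $L(p_\beta)\leq 2^{d(\beta)}M(\beta)$ for the minimal polynomial of $\beta$) then yields
\[
\log H(\beta)\leq d\bigl((2m-1)(\log 4+h(p))+\log 2\bigr)=2dm(\log 4+h(p))-d\bigl(\log 2+h(p)\bigr)\leq 2dm(\log 4+h(p)),
\]
where the last step uses $h(p)\geq 0$. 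The main obstacle is not conceptual: everything reduces to tracking how the height of a polynomial evaluation, the quotient of two algebraic numbers, and the usual height $H$ compare; the slightly delicate point is choosing the length bound $L(R(G;\cdot))\leq 3^m$ (rather than a looser bound like $4^m$) to absorb the additive $d\log 2$ term arising in the conversion from $h(\beta)$ to $H(\beta)$ and still land at the clean constant $\log 4$ in the stated inequality.
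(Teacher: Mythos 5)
Your proof is correct and follows essentially the same route as the paper, which bounds the length of $R(G;\cdot)$ by $4^m$, deduces $h(R(G;p))\leq m(\log 4+h(p))$ from the height bound for polynomial evaluation, invokes the two-sided Liouville-type bound for (a), and then uses subadditivity of heights under quotients together with the conversion from absolute logarithmic height to usual height for (b); the paper merely delegates the latter facts to Lemma 3.7 of the cited reference rather than spelling them out. One small inaccuracy in your closing remark: you claim the sharper length bound $3^m$ is needed to absorb the $d\log 2$ term in part (b), but your own computation never uses $3^m$ — you use $m(\log 4+h(p))$ and $(m-1)(\log 4+h(p))$, and the slack that swallows $d\log 2$ actually comes from $G\setminus e$ having only $m-1$ edges (yielding $(2m-1)$ rather than $2m$), not from the improved constant; the paper's $4^m$ bound suffices here just as well.
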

\begin{proof}
To prove (a) note that since $R(H;p)=\sum_{F\subseteq E} p^{|E\setminus F|}(1-p)^{|F|}$, where the sum runs over all connected sets, it follows that $R(H;p)$ is a polynomial in $p$ of degree at most $m$ and where the sum of the absolute values of coefficients is bounded by $2^{2m}$.
It follows from~\cite{main-approx}*{Lemma 3.7(b)} that $h(R(H;p))\leq m(\log(4)+h(p)).$
Consequently,~\cite{main-approx}*{Lemma 3.7(a)} implies $|\log (|R(H;p)|)|\leq dm(\log(4)+h(p))$, using that $d$ is an upper bound on the degree of $R(H;p)$ as an algebraic number.

The proof of (b) follows along the same lines. We have by (a) and by~\cite{main-approx}*{Lemma 3.7(b)}, $h(\frac{R(G;p)}{R(G\setminus e;p)})\leq 2m(\log(4)+h(p))-\log(4)-h(p)$.
The claimed bound then follows from~\cite{main-approx}*{Lemma 3.7(c)} using that the ratio has degree at most $d$.
\end{proof}

We are now ready to state and prove a theorem that allows us to (essentially) compute the ratio $R(F;p)/R(F-e;p)$ if we have access to an oracle for \textsc{Approx-Abs-Planar-Rel$(p)$} or  \textsc{Approx-Arg-Planar-Rel$(p)$}.

\begin{theorem}\label{thm:compute ratio}
Let $G_0$ be a two-terminal graph and let $p$ be a non-positive algebraic number contained in $\mathcal{D}_{G_0}\cup\mathcal{D}^{\mathbb{R}}_{G_0}$.
Suppose there exists and algorithm that on input of a graph $F$ computes a $0.25$-abs approximation (resp. $0.25$-arg approximation) to $R(F;p)$. 
Then there exists an an algorithm that on input of a graph $F$ and an edge $e$ outputs in polynomial time in $|F|$ an algebraic number $r$ and a bit $b\in \{0,1\}$ satisfying the following: 
\begin{itemize}
    \item[(1)] if $R(F-e;p)\neq 0$, then $b=1$ and $r=\frac{R(F;p)}{R(F-e,p)}$;
    \item[(2)] if $R(F-e;p)=0$ and $R(F/e;p)\neq 0$, then $r=1-p$ and $b=0$;
    \item[(3)] if both $R(F-e;p)=0$ and $R(F/e;p)=0$, then the algorithm may output any algebraic number $r$ and bit $b$.
\end{itemize}
Moreover, if $G_0$ is planar with its terminals on the same face, then the graphs $F$ can be restricted to being planar.
\end{theorem}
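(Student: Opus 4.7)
The plan is to set $A=R(F\setminus e;p)$ and $B=R(F;p)$ and then use the box-shrinking procedure of Theorem~\ref{thm:box-shrinking} together with the density machinery of Section~\ref{sec:exponential density} either to certify that $A=0$ or to localize $-B/A$ well enough to recover it exactly via the Kannan--Lenstra--Lov\'asz algorithm~\cite{kannan-1988}; the desired ratio $R(F;p)/R(F\setminus e;p)$ is then just a single negation away.

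First I would record the quantitative inputs. By Lemma~\ref{lem:height bounds}(a) there is a rational $C>1$ with $\log C$ polynomial in $|F|$ (and in the bit-complexity of $p$) such that $|A|,|B|\le C$ and each is either zero or at least $1/C$. Next I would assemble the approximate oracle for $A\hat y+B$ required by Theorem~\ref{thm:box-shrinking}. Given a target $y_0\in\mathbb{Q}[i]$ (or $\mathbb{Q}$, depending on whether $p\in\mathcal D_{G_0}$ or $\mathcal D^{\mathbb{R}}_{G_0}$) and accuracy $\varepsilon>0$, Theorem~\ref{thm:constructing} produces in $\poly(\size(y_0,\varepsilon))$ time a graph $G\in\Hs_{G_0}$ with $R(G;p)\ne 0$, together with the exact value $R(G;p)$, such that $\hat y:=y_G(p)-(p+1)\in B(y_0,\varepsilon)$. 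Feeding $F(G)_e$ to the assumed reliability oracle, Lemma~\ref{lem:step 1 in the reduction} converts its output into a $0.25$-abs (respectively $0.25$-arg) approximation of $R(F;p)+(y_G(p)-(p+1))R(F\setminus e;p)=A\hat y+B$. In the planar setting a trivial induction shows that every $G\in\Hs_{G_0}$ is planar with its terminals on a common face, so $F(G)_e$ remains planar and the planar oracle applies throughout.

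Plugging this oracle and the bound $C$ into Theorem~\ref{thm:box-shrinking} with a $\delta$ to be specified, I obtain either the verdict ``$A=0$'' or a rational $y$ with $-B/A\in B_\infty(y,\delta/2)$. In the first case I would output $b=0$ and $r=1-p$: in situation~(2) (i.e.\ $R(F\setminus e;p)=0$ and $R(F/e;p)\ne 0$) the deletion--contraction identity of Lemma~\ref{lem:con-del} gives $B=(1-p)R(F/e;p)\ne 0$, so the verdict ``$A=0$'' is reliable and the emitted $r=1-p$ equals $R(F;p)/R(F/e;p)$, consistent with the bit-$0$ flag; in situation~(3) both quantities vanish and the theorem permits any output, so this choice is still admissible. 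In the second case we are in situation~(1), and I would extract the exact value of $-B/A$ from the rational approximation $y$ and then output $b=1$ together with $r=-(-B/A)=B/A$.

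The main technical obstacle is this exact-recovery step, which fixes the choice of $\delta$. By Lemma~\ref{lem:height bounds}(b), $-B/A$ is algebraic of degree at most $\deg(p)$ and absolute logarithmic height $O(|F|)$, hence of usual height at most $\exp(O(|F|))$. Classical separation bounds for algebraic numbers of bounded degree and height then guarantee that distinct such numbers are separated by some $\delta_0$ with $\log(1/\delta_0)$ polynomial in $|F|$; choosing $\delta<\delta_0$ ensures that at most one admissible candidate lies in $B_\infty(y,\delta/2)$, and the Kannan--Lenstra--Lov\'asz procedure of~\cite{kannan-1988} extracts it in polynomial time. Coordinating the three parameter budgets---the height bound $C$, the separation threshold $\delta$, and the accuracy $\varepsilon$ at which Theorem~\ref{thm:constructing} is invoked---so that each remains polynomial in $|F|$ is the principal bookkeeping challenge, but since Theorem~\ref{thm:constructing} runs in $\poly(\size(y_0,\varepsilon))$ time and Theorem~\ref{thm:box-shrinking} in $\poly(\log(C/\delta))$ time, the whole reduction stays polynomial in $|F|$ and the claim follows.
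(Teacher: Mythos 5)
Your proposal is correct and follows essentially the same route as the paper: set $A=R(F\setminus e;p)$, $B=R(F;p)$, bound them via Lemma~\ref{lem:height bounds}, build the $A\hat y+B$ oracle from Theorem~\ref{thm:constructing} together with Lemma~\ref{lem:step 1 in the reduction}, run the box-shrinking procedure of Theorem~\ref{thm:box-shrinking}, and in the nonzero case recover the exact ratio by Kannan--Lenstra--Lov\'asz using the separation bound (the paper cites Mahler explicitly and picks $\delta=\exp(-(d^2+5d))C^{-2}$). The only cosmetic difference is that you are slightly less explicit about the choice of $\delta$ and the height parameter fed to the KLL step, but the structure and justification of every step match the paper's argument.
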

\begin{proof}
To prove this we follow part of the proof of~\cite{main-approx}*{Theorem 3.12}.
Let us write $B:=R(F;p)$ and $A:=R(F-e;p)$ and interpret the ratio 
$-R(F;p)/R(F-e;p)$ as a shifted root of the equation $Ax+B=0$, where we think of $x=y_G-(p+1)$ for some $G\in \Hs^*_{G_0}(p)$.
Let us denote $d=d(p)$ and $g=h(p)$ (the degree and absolute logarithmic height of the algebraic number $p$).

Let $C$ be the smallest integer bigger than $\exp(dm(\log(4)+h(p))$, where $m$ denotes the number of edges of $F$.
Then by Lemma~\ref{lem:height bounds}, both $|A|<C$ and $|B|<C$ as well as $|A|>1/C$ if $A\neq 0$ and $|B|>1/C$ if $B\neq 0$.
We know show that Theorem~\ref{thm:constructing} combined with Lemma~\ref{lem:step 1 in the reduction} gives us the desired algorithm to be able to apply the box shrinking procedure (Theorem~\ref{thm:box-shrinking}).
Indeed on input of $y_0\in \Q[i]$ and $\varepsilon>0$ the algorithm of Theorem~\ref{thm:constructing} gives us a two-terminal graph $G\in \Hs^*_{G_0}(p)$ such that $|y_0-(y_{G}-(p+1))|\leq \varepsilon$ in time $\poly(\size(y_0,\varepsilon))$ as well as the value $R(G;p)$.
Applying the algorithm of Lemma~\ref{lem:step 1 in the reduction} we obtain an $0.25$-abs-approximation (resp. $0.25$-arg-approximation) to $R(F;p)+(y_{G}-(p+1))R(F-e;p)$ in time $\poly(|G|+|F|)=\poly(\log(C),\size(y_0,\varepsilon))$.
We can therefore apply the box shrinking procedure (Theorem~\ref{thm:box-shrinking}) with \[\delta=\exp(-(d^2+5d))C^{-2},\]
which has running time bounded by $\poly(\log(C/\delta))=\poly(\log(C))$.

If the output of that algorithm is $"A"=0$ we output, $b=0$ and $r=1-p$ and if it outputs $"A\neq 0"$ and $y\in \mathbb{Q}[i]$ we output $b=1$ and we use a slight modification of the  LLL~\cite{LLL}-based algorithm of Kannan, Lenstra and Lov\'asz~\cite{kannan-1988} in the form of~\cite{main-approx}*{Proposition 3.9}. We input $d$, $H=2C$ and $y$ to that algorithm and in time $\poly(d, \log(C),\size(y))=\poly(\log(C))$ it outputs a polynomial $q$.
We then output the algebraic number given by $q$ and the box $B_\infty(y,\delta/2)$.
The overall running time is bounded by $\poly(\log(C))=\poly(\size(F))$.

By Theorem~\ref{thm:box-shrinking} it suffices to show that if $R(F-e;p)\neq 0$, then $q$ is the minimal polynomial of $-R(F;p)/R(F-e;p)$ and that $B_\infty(y,\delta/2)$ contains no other zeros of $q$.
By Lemma~\ref{lem:height bounds} we know that the height of $-R(F;p)/R(F-e;p)$ is at most $2C=H$ and that it has degree at most $d$. 
Thus by~\cite{main-approx}*{Proposition 3.9} $p$ is indeed the minimal polynomial of $-R(F;p)/R(F-e;p)$. 
Finally, it follows by a result of Mahler~\cite{Mahler64} the absolute value of the logarithm of the distance between any two distinct zeros of $q$ is at most $3d/2\log(d)+d\log(H))<\log(1/\delta)$ and hence $-R(F;p)/R(F-e;p)$ is the only zero of $q$ contained in $B_\infty(y,\delta/2)$ and therefore the pair $(q,B_\infty(y,\delta/2))$ forms a representation of the algebraic number $-R(F;p)/R(F-e;p)$, as desired.

In case $G_0$ is planar with its two vertices of on the same face, the same is true for any $G\in \mathcal{H}^*_{G_0}$ and we can indeed restrict our graphs $F$ to be planar in the statement without affecting the conclusion.
\end{proof}

\subsection{Telescoping}\label{sec:telescoping}
Here we adapt the telescoping procedure from~\cite{main-approx} to the setting of the reliability polynomial. After which we combine it with Theorem~\ref{thm:compute ratio} to finally finish the proof of Theorem~\ref{thm:main hard}.
\begin{theorem}\label{thm:telescoping}
Let $p\notin \{0,1\}$ be a fixed algebraic number and suppose that we have an algorithm that on input of a (planar) graph $G$ and an edge $e$ of $G$ outputs an algebraic number $r$ and a bit $b\in\{0,1\}$ in polynomial time in $|G|$ such that
\begin{itemize}
    \item[1] if $R(G\setminus e;p)\neq 0$ then $b=1$ and $r=\frac{R(G;p)}{R(G\setminus e,p)}$;
    \item[2] if $R(G\setminus e;p)=0$ and $R(G/e;p)\neq 0$ then $r=1-p$ and $b=0$;
    \item[3] if both $R(G\setminus e;p)=0$ and $R(G/e;p)=0$ then the algorithm may output any algebraic number $r$ and bit $b$.
\end{itemize}
Then there is an algorithm that on input of a (planar) graph $G$ computes $R(G;p)$ in $\poly(|G|)$ time.
\end{theorem}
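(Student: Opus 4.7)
\noindent\textbf{Proof plan for Theorem~\ref{thm:telescoping}.} The plan is to use the oracle inside a simple recursion based on the deletion-contraction identity of Lemma~\ref{lem:con-del}. On input of a graph $G$ with no edges, output $1$ if $G$ has a single vertex and $0$ otherwise. On input of a graph $G$ with an edge, pick an arbitrary edge $e$ of $G$, call the oracle on $(G,e)$ to obtain a pair $(r,b)$, and proceed as follows: if $b=1$, recursively compute $R(G\setminus e;p)$ and return $r\cdot R(G\setminus e;p)$; if $b=0$, recursively compute $R(G/e;p)$ and return $(1-p)\cdot R(G/e;p)$. In the planar setting, deletion and contraction preserve planarity, so the recursion never leaves the planar class.

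To argue correctness I would induct on $|E(G)|$, splitting according to the three cases in the theorem. In case (1) the oracle returns the exact ratio, so $r\cdot R(G\setminus e;p)=R(G;p)$ by definition. In case (2) we have $R(G\setminus e;p)=0$, and the deletion-contraction recurrence immediately gives $R(G;p)=p\cdot 0+(1-p)R(G/e;p)=(1-p)R(G/e;p)$, which is exactly what the algorithm returns when it sees $b=0$. The delicate point is case (3), where both $R(G\setminus e;p)=0$ and $R(G/e;p)=0$ and the oracle output is arbitrary; the key observation is that deletion-contraction then forces $R(G;p)=0$, and either branch of the algorithm returns $0$ regardless of $r$ or $b$, because it multiplies by the inductively correct value $R(G\setminus e;p)=0$ or $R(G/e;p)=0$. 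Hence adversarial behaviour in case (3) is automatically neutralised and the induction goes through.

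For the running-time analysis I would note that each recursive call strictly decreases $|E(G)|$, so the recursion depth and total number of oracle queries are at most $|E(G)|$; since each query runs in time polynomial in $|G|$ by hypothesis, the combined oracle cost is polynomial. What remains is to control the size of the algebraic numbers being manipulated. All intermediate values lie in the fixed number field $\Q(p)$, so the degrees are bounded by $d(p)$, and by Lemma~\ref{lem:height bounds}(a) together with the height estimates of \cite{main-approx}*{Lemma~3.7} the absolute logarithmic height of $R(H;p)$ is $O(|H|)$ for every graph $H$ encountered. Consequently every representation stays of size $\poly(|G|)$, and each multiplication step (by $r$ or by $1-p$) can be carried out in polynomial time using the standard arithmetic on algebraic numbers summarised in Section~\ref{sec:density implies hardness}.

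The only genuine conceptual hurdle is case (3), since the oracle is unreliable there; the observation that both algorithmic branches multiply by an inductively computed zero removes this hurdle entirely. With this in hand, combining Theorem~\ref{thm:telescoping} with Theorem~\ref{thm:compute ratio} reduces the exact evaluation problems \textsc{Planar-Rel$(p)$} and \textsc{Rel$(p)$} to the corresponding approximation problems; together with Vertigan's Theorem~\ref{thm:vertigan} this establishes the \textsc{\#P}-hardness claimed in Theorem~\ref{thm:main hard}.
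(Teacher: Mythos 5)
Your proof is correct and takes essentially the same approach as the paper: both algorithms generate the identical chain of graphs by deleting when $b=1$ and contracting when $b=0$, and both rely on the same key observation that in the untrusted case (3) the contribution is forced to vanish because it is multiplied by an (inductively correct) zero. Your recursive presentation is marginally cleaner than the paper's unrolled telescoping product — in particular, multiplying by $1-p$ rather than by the oracle's $r$ when $b=0$ removes the dependence on the untrusted output in case (3), and the single induction on $|E(G)|$ avoids the paper's separate treatment of the subcases $R(G;p)\neq 0$ and $R(G;p)=0$ — but the underlying algorithm and correctness argument are the same.
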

\begin{proof}
    We construct a sequence of graphs $G_0,\ldots,G_m$ where $G_0=G$ and $G_m$ is a graph with no edges as follows: for $i\geq 0$, we apply the assumed algorithm to the graph $G_i$ and an edge $e_i$ of $G_i$. The algorithm outputs the pair $(r_i,b_i)$; if $b_i=1$ we choose $G_{i+1}=G_i\setminus e_i$, otherwise we choose $G_{i+1}=G_i/e_i$, and if $G_{i+1}$ has no edges the algorithm terminates and we set $m=i+1$.
 We output $c\prod_{i=0}^{m-1} r_i$, where $c= R(G_m;p)$. 
  Since $G_m$ has no edges, $c=1$ if and only if $G_m$ has exactly one vertex, otherwise $c=0$. 

Since the running time is clearly polynomial in $|G|$ it suffices to prove that
    $$R(G;p)=c\prod_{i=0}^{m-1}r_i.$$
Let us first assume that $R(G;p)\neq 0$.
We show by induction that $R(G_i;p)\neq 0$ for all $i=0,\ldots,m$.
Assuming $R(G_i;p)\neq 0$ we know that not both $R(G_i-e_i;p)=0$ and $R(G_i/e_i;p)=0$. In case $R(G_i\setminus e_i;p)\neq 0$ the algorithm outputs $b_i=1$ and hence $G_{i+1}=G_i-e_i$, and thus $R(G_{i+1};p)\neq 0$. In case $R(G_i-e_i;p)=0$ and hence $R(G_i/e_i;p)\neq 0$ the algorithm outputs $b_i=0$ and hence $G_{i+1}=G_i/e_i$, and thus also in this case $R(G_{i+1};p)\neq 0$.
Since $G_m$ has no edges, $R(G_m;p)=c$ for all $p\in\C$, where $c=1$ if and only if $G_m$ has exactly one vertex, otherwise $c=0$. 
It thus follows that $c=1$. 
Since in case $b_i=0$ we have $R(G_i-e_i;p)=0$ and thus by deletion contraction (Lemma~\ref{lem:con-del})  $R(G_i;p)=(1-p)R(G_i/e_i;p)=(1-p)R(G_{i+1};p)$, it follows that $r_i=\frac{R(G_i;p)}{R(G_{i+1};p)}$ for all $i$. Therefore 
\[
R(G;p)=\prod_{i=0}^{m-1}\frac{R(G_i;p)}{R(G_{i+1};p)}=c\prod_{i=0}^{m-1}r_i,
\]
as desired.

In case $R(G;p)=0$ we must show that $c\prod_{i=0}^{m-1}r_i=0$.
We may assume that $G_m$ consists of single vertex since otherwise $c=0$ and we are done.
Then $G_m=K_1$ and $R(G_m;p)\neq 0.$
Let $i$ be the first index for which  $R(G_i;p)\neq 0.$
Then $i>0$ and $R(G_{i-1};p)=0$.
It must be the case that $R(G_{i-1}\setminus e_{i-1};p)\neq 0$ for otherwise $R(G_{i-1}/e_{i-1};p)=0$ by the deletion contraction recurrence (and the fact that $p\notin \{0,1\}$), in which case $R(G_i;p)$ would equal $0$.
Then the algorithm outputs $b_i=1$ and $r_i=R(G_{i-1};p)/R(G_i\setminus e_i;p)=0$ and thus the product of the $r_j$ is equal to $0$, as desired.

Clearly, if $G_0$ is planar, any of the graphs $G_i$ is also planar and thus the algorithm is also correct in the planar setting.
\end{proof}

We can now prove Theorem~\ref{thm:main hard}.
\begin{proof}[Proof of Theorem~\ref{thm:main hard}]
We focus on the case that $G$ is planar and has its terminals on the same face.
The general case goes along exactly the same lines.

Suppose there exists and algorithm that on input of a graph $F$ computes a $0.25$-abs approximation (resp. $0.25$-arg approximation) to $R(F;p)$.
Then Theorem~\ref{thm:compute ratio} combined with Theorem~\ref{thm:telescoping} gives an algorithm that on input of a planar graph $F$ computes $R(F;p)$ exactly in time polynomial in $|F|$.

Since by Vertigan's result (Theorem~\ref{thm:vertigan}) \textsc{Planar-Rel}$(p)$ is \textsc{\#P}-hard, it follows that both \textsc{Approx-Abs-Planar-Rel$(p)$} and \textsc{Approx-Arg-Planar-Rel$(p)$} are \textsc{\#P}-hard.
\end{proof}

\section{Concluding remarks}
We collect some concluding remarks here.

While our framework of systematically exploiting series and parallel compositions of two-terminal graphs has shed new light on the question of whether the set of reliability zeros is bounded, we have unfortunately not been able to settle this question. 
Proposition~\ref{prop:unbounded zeros?} provides two equivalent versions of this question, neither of which make it more clear what the truth should be.

Another natural question that arises from our work is: can the bound of $k\le 4$ in Proposition~\ref{prop:zeros outside disk} be improved? Recall that if we don't have any constraint on $p^k$ in Proposition~\ref{prop:zeros outside disk}, then this would imply that the collection of reliability zeros is dense in the complex plane.

A closely related dual object to the reliability polynomial is the forest generating function $F_G(x)=T_G(x+1,1)$. 
For planar graphs the forest generating function is (up to a reparametrization) equal to the reliability polynomial of the planar dual of the graph.
As such it would be interesting to study the zeros of the forest generating function of graph families. 
Perhaps they shed some light on the question whether or not reliability zeros are dense in the complex plane or not. 
We expect that statements similar to Theorems~\ref{thm:main equal} and~\ref{thm:main hard} are true for the forest generating function, but we leave the details open for further work. 

\bibliographystyle{abbrv}
\bibliography{rel}
\clearpage
\section*{Appendix}
\hypertarget{code:pentagon}{We} present a proof of Lemma~\ref{lemma:pentagonal} in the form of Sage code~\cite{sagemath} which directly computes $\hat{y}_{H(G_1,G_2)}(p)$ and $\abs{F(e^{it},e^{-it})}^2$; it can be run directly by clicking \href{https://sagecell.sagemath.org/?z=eJytVE1v2zAMvRvwfxCwg6XWcSO1uxTwoRnQdOfulrWG4iiOUFkKJDmdse6_j5KdfncItukgM6TeE_nIaMctznqa9ywjaZImn74Y7bztao_8RqCt0J43RqPG8u0GzdG99Buk-FIoJVZIrBrh0mRezkMYL_A0pySHncX9FHYabTraZ7CzaLNon8J-AxdzWi4-hD07CidZub_miQ78a2PRHZIacXqeJgjWvHDCVyHFKiaM7xbTm_xuQW_yjNNQ7hOGHYLZS3S9MfdRnSgKlF848GDFe9P5MqulrTvFbZY_8bjym-1EXhtlbLXsB2f5M3CeZ9603Jvs18Bem3bbee4lqG7W8Z62U17uuJXcC2SFknwplfQ92hrVa9NKrsLReZpsKwiX06G0i1DadbeEkhzmlIw1htDsRYjtQ2FJIBq6qaTz2HLdCPyZkDz-vCDH8TuDthXSVbXRWtRerPBzjrBiKsclphN60lNyq4QG9FFPb_FgTsIH0iJH318ixzUg2YCcAZINyNmIZISMBRfrTqnKyXar5LrHUce1NS1yvBFF7JIrxkzlLggHR4316DH7KqhuNEx7qMhzqaVuqp2wXvz4ey5X6a5dCvtRXx3k6ycfNxTP8ynMeJq4_9DXq_LAtqIXs_CHsvAVKUv2qu0AOUPa-JDIQfLiqxxNXw9PWO6fJ-jwMdpwX_VlvPBkvJfCcFmpPc5i8DzL47eA2oWF_6vF5PXc5W-yyE6yt86BZyVCo_X7TO8-BQ-XD7cMgR1mJ7wySqTJLjzgPrxMl6wcmB30H_e0rI3Dnhx_PXJSgwFv_Oia7F174d7DTd7iHqnIozYxJ9DmkhX77CtvZQMl_AZv09-w&lang=sage&interacts=eJyLjgUAARUAuQ==}{here}.
\begin{lstlisting}
var('y1,y2')

#Construct the pentagon graph G with labelled edges
G=Graph([(0,1),(0,2),(0,3),(1,2),(1,3),(1,4),(2,3),(2,4),(3,4)])
a1=[(0,2),(0,3),(1,2),(1,3),(2,4),(3,4)]
a2=[(0,1),(1,4),(2,3)]
for k in a1:
    G.set_edge_label(k[0],k[1],'a1')
for k in a2:
    G.set_edge_label(k[0],k[1],'a2')

#Show the graph
G.show(layout='circular',edge_labels=True,color_by_label={'a2':'tomato'})

#computation of the multivariate reliability polynomial of G
p_rel=0
for A in Subsets(a1):
    for B in Subsets(a2):
        if Graph([list(range(5)),list(A)+list(B)]).is_connected():
            p_rel+=(1-1/y1)^len(A)*y1^(len(A)-len(a1))*\
                   (1-1/y2)^len(B)*y2^(len(B)-len(a2))
p_rel.full_simplify()

from sage.graphs.connectivity import connected_component_containing_vertex
from sage.graphs.connectivity import connected_components_number

#computation of the split-reliability polynomial of (G,0,4)
sp_rel=0
for A in Subsets(a1):
    for B in Subsets(a2):
        H=Graph([list(range(5)),list(A)+list(B)]) 
        if connected_components_number(H)==2:
            if 4 not in connected_component_containing_vertex(H, 0):
                sp_rel+=(1-1/y1)^len(A)*y1^(len(A)-len(a1))*\
                        (1-1/y2)^len(B)*y2^(len(B)-len(a2))
hat_y=p_rel/sp_rel+1
print('hat_y:',hat_y.numerator().full_simplify(),
               '/',
               hat_y.denominator().full_simplify())

#computation of |F|^2 on the circle
var('t')
F2=hat_y.subs(y1=cos(t)+I*sin(t),y2=cos(t)-I*sin(t))*\
   hat_y.subs(y1=cos(t)-I*sin(t),y2=cos(t)+I*sin(t))
print('|F|^2:',F2.simplify_trig())
\end{lstlisting}
\clearpage
\hypertarget{code:5to9}{We} present a proof of Proposition~\ref{prop:5to9} in the form of Sage code~\cite{sagemath} which computes the virtual edge interaction and $\gcd(R(G_k;p),p^k-1)$ of the graphs represented by the given adjacency matrices; it can be run directly by clicking \href{https://sagecell.sagemath.org/?z=eJydk11vmzAUhu8r9T9Y2gV2Qgi0-VonrjaJ7KLStO0uSiMH3MQKwZ4xaUnb_z5_QNMUkkrFEoFzjl8_ec8hCZ8uL4C6voIbcIuloI9wZiP6mvmu7wZqTebuh1Ed8d2Je9WITlT8ODrRdWr58_khiqqKyXmWcSvLuIVl3MIybrCMdZ1hOQTnNcv4PMuolWXUwjJqYRk1WEa67hTL6DxL4A5bad7HLc9Qr0Z8aDJN0qHJHMeNwmna4Ue0gxO0gxbagVpNqoHJtMV15voobhTUun6lnaPLi5fLi3smwAbQDCQ3Nv5FvVBJcUpzmq1AzgoRk77EYkUk2BEhaUxygLME7Bc-2OG0IHZjHvr2QYaBfdj7IXnk8KrDaednf4NslAuaSXhgc7ROCMgdvAKcAgqCvtPNpYAb1HWQYwurvTViynCi6VYC8zW4F2wLtmTLRGnAeMqk1Gkq7YYojHQhTGabeaX0K4w8XQdTXLJChk5MRVykWDh1gZev2QN8d3DMtrww2nJNgCApxUuaUlkCztIyY1tlnOXR-b-FlORNxqrssIAOr8_hC6USvvEj8qTetjhsqyH05eXFMoePYeCWYdDnqAODHkd3MPKYSIiAqBegDtfvOd0T9fqa6AboyMtGH6LvP-BvGC023zhylerdBqm-ON1Die6K4fVUX5XTEHmrOIG2FrV26tiwnCuvTtm2LIE-PpcoVL-o-0fdapmHNREE6CSguZGyrWdLiWlGEr2ZJiST9L40Y2sGQerBjqzGVDU8ZryszZx6W6JmelGPNJzlrqynI8ttW8Abf6af6sv0XV-mdV-mp_pSHV0h9Mz9nKc7KmSh7CPJiqj_K4nAsaQss9VrLMtFFBqVvpXsBsdyyoHKNamqq8H-V1ChbOWCcW2Q-uTXLE1OTM6z2vdUviz08CCovmf03DI5WBlkcaxZPNz7CHkZRB8M5rOdynO6dioPska1TdbpffJyjmTQf4Af790=&lang=sage&interacts=eJyLjgUAARUAuQ==}{here}.
\begin{lstlisting}
d={
    9 : Matrix([
        [0,0,1,1,8],
        [0,0,1,1,8],
        [1,1,0,8,2],
        [1,1,8,0,2],
        [8,8,2,2,0]]
        ),
    8 : Matrix([
        [0,0,1,1,7],
        [0,0,1,1,7],
        [1,1,0,7,2],
        [1,1,7,0,2],
        [7,7,2,2,0]
        ]),
    7 : Matrix([
        [0,0,1,1,6],
        [0,0,1,1,6],
        [1,1,0,6,2],
        [1,1,6,0,2],
        [6,6,2,2,0]
        ]),
    6 : Matrix([
        [0,0,1,1,1,5],
        [0,0,1,1,1,5],
        [1,1,0,5,5,5],
        [1,1,5,0,5,2],
        [1,1,5,5,0,2],
        [5,5,5,2,2,0]
        ]),
    5 : Matrix([
        [0,0,1,1,1,4],
        [0,0,1,1,1,4],
        [1,1,0,4,4,2],
        [1,1,4,0,4,2],
        [1,1,4,4,0,3],
        [4,4,2,2,3,0]
    ])
}
for k in d:
    # initialising source/target vertices and z_0 value
    s=0
    t=1
    z0=exp(2*pi*I/k)
    print(
        'z_0 = e^(2 pi i 1/'+str(k)+')'
    )
    
    # loading graph from memory and plotting it
    G=Graph(d[k])
    P=G.plot(layout='circular')
    P.show()
    
    # computing the reliability polynomial from the Tutte polynomial
    var('p')
    p_rel=(
        G.tutte_polynomial()
        .subs(x=1,y=1/p)*(1-p)^(G.order()-1)*p^(G.size()-G.order()+1)
    )
    print(
        'GCD(R(G_k;p),1-p^k) = '+
        str(p_rel.expand().gcd(1-p^k))
    )
    
    # computing the split reliability polynomial by R(G_st)=R(G)+S(G)
    # where G_st is the graph obtained by identifying s and t in G
    H=G.copy()
    H.merge_vertices([s,t])
    nsp_rel= (
        H.tutte_polynomial()
        .subs(x=1,y=1/p)*(1-p)^(H.order()-1)*p^(H.size()-H.order()+1)
    )
    sp_rel=nsp_rel-p_rel
    
    # computing the virtual edge interaction
    haty_G=p_rel/sp_rel+1
    
    # verifying that the required properties hold
    print(
        '|hat{y}_(G_k)(z_0)| = '+
        str(abs(haty_G.subs(p=z0)).n())
    )
    print(
        '|R(G_k;z_0)| = '+
        str(abs(p_rel.subs(p=z0).n()))
    )
    print('--------------------------------------------------------')
    print()
\end{lstlisting}

\end{document}